\newcommand{\R}{\mathbb{R}}
\newcommand{\E}{\mathbb{E}}
\newcommand{\calH}{\mathcal{H}}
\newcommand{\inr}[2]{\langle #1, #2 \rangle}
\newcommand{\normal}{\mathcal{N}}
\newcommand{\pdiff}[2]{\frac{\partial #1}{\partial #2}}
\newcommand{\pdiffII}[3]{\ifstrequal{#2}{#3}
{\frac{\partial^2 #1}{\partial #2^2}}
{\frac{\partial^2 #1}{\partial #2 \partial #3}}
}
\newcommand{\diffII}[3]{\ifthenelse{\equal{#2}{#3}}
{\frac{d^2 #1}{d #2^2}}
{\frac{d^2 #1}{d #2 d #3}}
}
\newcommand{\diff}[2]{\frac{d #1}{d #2}}
\newcommand{\grad}{\nabla}
\newcommand{\eps}{\epsilon}
\let\Pr\relax
\DeclareMathOperator{\Pr}{Pr}
\DeclareMathOperator{\spn}{span}
\DeclareMathOperator{\sgn}{sgn}
\DeclareMathOperator{\Var}{Var}
\DeclareMathOperator{\Cov}{Cov}
\DeclareMathOperator{\Inf}{Inf}
\newtheorem{theorem}{Theorem}[section]
\newtheorem{lemma}[theorem]{Lemma}
\newtheorem{proposition}[theorem]{Proposition}
\newtheoremstyle{example}{\topsep}{\topsep}%
     {}
     {}
     {\bfseries}
     {}
     {\newline}
     {\thmname{#1}\thmnumber{ #2}\thmnote{ #3}}
\theoremstyle{example}
\newcommand{\replace}{\oslash}
\DeclareMathOperator{\noisestab}{NS}
\begin{document}

\title{Noise Stability and Correlation with Half Spaces}
\author{
	Elchanan Mossel
	\thanks{University of California, Berkeley and University of Pennsylvania; \texttt{mossel@wharton.upenn.edu}}
	\and
	Joe Neeman
	\thanks{UT Austin and University of Bonn; \texttt{joeneeman@gmail.com}}
}

\maketitle 

\begin{abstract}
Benjamini, Kalai and Schramm showed that a monotone function $f  : \{-1,1\}^n
\to \{-1,1\}$ is noise stable if and only if it is correlated with a half-space
(a set of the form $\{x: \inr{x}{a} \le b\}$).
 
We study noise stability in terms of correlation with half-spaces for general
(not necessarily monotone) functions.  We show that a function $f: \{-1, 1\}^n
\to \{-1, 1\}$ is noise stable if and only if it becomes correlated with a
half-space when we modify $f$ by randomly restricting a constant fraction of
its coordinates.

Looking at random restrictions is necessary: we construct noise stable
functions whose correlation with any half-space is $o(1)$. The examples further
satisfy that different restrictions are correlated with different half-spaces:
for any fixed half-space, the probability that a random restriction is
correlated with it goes to zero.

We also provide quantitative versions of the above statements, and versions
that apply for the Gaussian measure on $\R^n$ instead of the discrete cube.
Our work is motivated by questions in learning theory and a recent question
of Khot and Moshkovitz. 
\end{abstract}

\newpage

\section{Introduction}
 
In a seminal paper, Benjamini, Kalai and Schramm~\cite{BeKaSc:99} related noise stability to correlation with half-spaces by showing that a monotone boolean function is noise stable if and only if it is correlated with a half-space. Our interest in this paper is relating noise stability with correlation with half-spaces for general boolean functions. Our results are motivated by recent work of Khot and Moshkovitz whose goal is to construct a Lasserre 
integrality gap for the Unique Games problems as well as by natural problems in learning theory. 
 
In the following subsections we introduce the setup and results in the boolean and Gaussian cases and discuss the motivation for our work.

\subsection{The boolean setting}\label{sec:intro-boolean}

Let $\mu_n$ denote the uniform measure on $\{-1, 1\}^n$. For $t \ge 0$, let $P_t$
denote the Bonami-Beckner semigroup, defined by
\[
    (P_t f)(x) = (P_{t,1} f)(x) = \E f + e^{-t} (f(x) - \E f)
\]
in the case $n = 1$ and $P_{t,n} = P_{t,1}^{\otimes n}$ otherwise. The \emph{boolean
noise stability} of a set $A \subset \{-1, 1\}^n$ is
\[
    \noisestab_t(A) = \E [1_A P_t 1_A],
\]
where the expectation is taken with respect to $\mu_n$. 
Since $P_{t} = P_{t/2} P_{t/2}$ and $P_t$ is
self-adjoint, we may also write $\noisestab_{t}(A) = \E [(P_{t/2} 1_A)^2]$. Then
\[
    \noisestab_t(A) - \mu_n(A)^2 = \noisestab_t(A) -  (\E P_{t/2} 1_A)^2 = \Var(P_{t/2} 1_A) \ge 0;
\]
the quantity $\Var(P_t 1_A)$ turns out to be a useful re-parametrization of the usual boolean noise sensitivity.

We say that a sequence $A_i: \{-1, 1\}^{n_i}$ of sets is \emph{noise sensitive} if
for every $t > 0$, $\Var(P_t 1_{A_i}) \to 0$ is $i \to \infty$. Otherwise, we say that the
sequence $A_i$ is \emph{noise stable}.

A \emph{half-space} is a set of the form $\{x \in \{-1, 1\}^n: \inr x a \le b\}$; write $\calH_n$ for the set
of all half-spaces in $\{-1, 1\}^n$. Define
\[
    M(A) = \sup_{A \in \calH_n} \Cov(1_A, 1_B).
\]
Clearly $0 \le M(A) \le \frac 14$ for all $A$.

The set $A \subset \{-1, 1\}^n$ is \emph{monotone} if whenever $x \in A$ and $y \ge x$ coordinatewise then $y \in A$.
Benjamini, Kalai, and Schramm~\cite{BeKaSc:99} proved that a sequence $A_i$ of monotone sets is
noise sensitive if and only if $M(A_i) \to 0$. In this article, we explore removing the condition of
monotonicity. First, we show that one direction of Benjamini et al.'s equivalence fails when the $A_i$
are allowed
to be non-monotone. In particular, we construct a sequence of sets $B_i \subset \{-1, 1\}^{n_i}$
such that $M(B_i) \to 0$ but $\noisestab_t(B_i) \not \to 0$; in other words, noise-stable sets
are not necessarily correlated with any half-spaces.

Although noise-stable sets may not be correlated with half-spaces, there is a characterization of
noise stability in terms of half-spaces; this characterization requires the notion of a
\emph{restriction}.
For $z \in \{-1, 0, 1\}$ and $y \in \{-1, 1\}$, define $z \replace y \in \{-1, 1\}$ by
\[
    z \replace y = \begin{cases}
      y & \text{if $z = 0$} \\
      z & \text{otherwise.}
    \end{cases}
\]
For $z \in \{-1, 0, 1\}^n$ and $y \in \{-1, 1\}^n$, define $z \replace y \in \{-1, 1\}^n$
coordinatewise: $(z \replace y)_i = z_i \replace y_i$.
For a set $B \subset \{-1, 1\}^n$, define a restriction of $B$ by
\[
    B_z = \{x \in \{-1, 1\}^n: z \replace x \in B\}.
\]
Write $\mu_t$ for the measure on $\{-1, 0, 1\}^n$ under which each coordinate is independent, equal
to zero with probability $e^{-t}$, and chosen uniformly from $\{-1, 1\}$ otherwise.

Our main theorem, in its qualitative form (its analogous quantitative versions are Theorem~\ref{thm:boolean-restriction}
and Theorem~\ref{thm:boolean-converse}), says that a set is noise stable if and only if we can make it
correlated with a half-space by randomly restricting a constant fraction of its coordinates.

\begin{theorem}\label{thm:boolean-intro}
  The sequence $B^{(i)} \subset \{-1, 1\}^{n_i}$ is noise stable if and only if
  there are some $t, \epsilon > 0$ such that for all sufficiently large $i$,
  $M(B^{(i)}_{Z}) \ge \epsilon$ with probability at least $\epsilon$, where $Z
  \sim \mu_t$.
\end{theorem}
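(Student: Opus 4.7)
The plan is to prove each direction separately; the corresponding quantitative bounds will be the paper's Theorems~\ref{thm:boolean-restriction} and~\ref{thm:boolean-converse}.

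For the easier direction (correlation on restrictions implies noise stability of $B$), I would first establish the Fourier identity
\[
\E_{Z \sim \mu_t}\bigl[\noisestab_q(B_Z)\bigr] \;=\; \sum_T \hat{1_B}(T)^2 \alpha^{|T|} \;=\; \noisestab_r(B),
\]
where $\alpha = 1 - e^{-t}(1-e^{-q})$ and $e^{-r} = \alpha$. This comes from computing $\E_Z \widehat{1_{B_Z}}(T)^2 = e^{-t|T|}\sum_{T' \supset T} \hat{1_B}(T')^2 (1-e^{-t})^{|T'|-|T|}$ and summing the resulting binomial. Consequently, $\noisestab_r(B) - \mu(B)^2 \ge \E_Z[\Var(P_{q/2} 1_{B_Z})]$, so it suffices to lower-bound $\Var(P_{q/2} 1_{B_Z})$ on the positive-probability event $M(B_Z) \ge \epsilon$. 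Taking the half-space $H$ realising $\Cov(1_{B_Z}, 1_H) \ge \epsilon$ and setting $g = 1_H - \mu(H)$, the projection bound
\[
\Var(P_{q/2} 1_{B_Z}) \;\ge\; \frac{\pair{1_{B_Z}}{ P_q g}^2}{\Var(P_{q/2} g)}
\]
closes the argument once we show $\pair{1_{B_Z}}{P_q g} \ge \epsilon/2$ for small enough $q$. This uses the fact that half-spaces have Fourier mass concentrated on low levels (a quantitative form of the noise stability of half-spaces), so $\pair{1_{B_Z}}{P_q g}$ is close to $\pair{1_{B_Z}}{g} \ge \epsilon$. Averaging over $Z$ completes this direction.

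For the harder direction (noise stability implies correlation on restrictions), assume $\Var(P_{t_0/2} 1_B) \ge \delta$. Then Fourier mass is substantial at bounded levels: $\sum_{1 \le |T| \le K} \hat{1_B}(T)^2 \ge \delta/2$ for $K = O(\log(1/\delta)/t_0)$, and some level $k^* \le K$ has weight at least $\delta/(2K)$. Writing $W_1(f) = \sum_i \hat f(\{i\})^2$ for the first-level Fourier weight, the identity
\[
\E_{Z \sim \mu_t}\bigl[W_1(1_{B_Z})\bigr] \;=\; \sum_T |T|\, \hat{1_B}(T)^2\, e^{-t}(1-e^{-t})^{|T|-1}
\]
together with the choice $t = \log k^*$ makes the multiplier at $|T| = k^*$ a dimension-free positive constant, so $\E_Z[W_1(1_{B_Z})] \gtrsim \delta/K$. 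Since $W_1(1_{B_Z}) \le 1/4$, reverse Markov gives $W_1(1_{B_Z}) \gtrsim \delta/K$ on an event of probability $\Omega(\delta/K)$. For each such $Z$, one proposes the half-space $H_Z = \{L_Z \ge c_Z\}$ in the direction $L_Z(x) = \sum_i \widehat{1_{B_Z}}(\{i\}) x_i$, with threshold $c_Z$ chosen to maximise the correlation with $1_{B_Z}$.

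The main obstacle is this last step. The Benjamini--Kalai--Schramm argument works by thresholding $L_Z$ at its median, using monotonicity of $1_{B_Z}$ essentially: nonnegativity of the $\widehat{1_{B_Z}}(\{i\})$ and monotonicity of $\E[1_{B_Z} \mid L_Z]$ via FKG together guarantee $\Omega(\sqrt{W_1})$ covariance with the threshold half-space. The paper's own counterexamples show that $1_B$ can have sizeable $W_1$ while being uncorrelated with every half-space, so the restriction step must genuinely help. Making this precise---showing that with probability $\Omega(\delta/K)$ over $Z$, the residual $1_{B_Z} - \mu(B_Z) - L_Z$ does not systematically cancel the signal $\E[L_Z \cdot 1_{H_Z}] \gtrsim \sqrt{W_1(1_{B_Z})}$ for some choice of threshold---is the technical heart. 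I expect it to rely on an anti-concentration estimate on $L_Z$ combined with an $L^2$ averaging argument over the restriction measure, exploiting the independence structure of $\mu_t$ to break the pathological cancellations present in the non-monotone examples.
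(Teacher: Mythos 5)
Your first direction (correlation of restrictions implies noise stability) is essentially the paper's Theorem~\ref{thm:boolean-converse}: the Fourier identity you write down is the same restriction/semigroup identity the paper uses, and your Cauchy--Schwarz projection step together with the low-degree concentration of half-spaces is exactly Proposition~\ref{prop:correlation-to-stability-boolean}, with Peres's bound (Theorem~\ref{thm:peres}) supplying the quantitative noise stability of half-spaces. That half is sound.

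The genuine gap is in the other direction, precisely at the step you flag as ``the technical heart,'' and the obstacle you anticipate there is not actually the obstacle. First, a factual correction: the paper's counterexamples do \emph{not} have sizeable first-level weight while being uncorrelated with every half-space --- the set $B^{(m)}$ is invariant under $x \mapsto -x$, so all of its odd-level Fourier coefficients, and in particular $w_1$, vanish; that is exactly why restrictions are needed, namely to push weight down to level one. Second, and more importantly, large $w_1$ \emph{does} imply correlation with a half-space with no monotonicity hypothesis at all: this is Proposition~\ref{prop:weight-half-space} ($w_1(f)=\eps^2$ and $\Var(f)=\sigma^2$ yield a half-space with covariance at least $c\eps^2/\sigma$ --- note the quadratic rather than linear dependence on $\eps$). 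The mechanism is not BKS-style thresholding at the median plus FKG. One writes $f = \E f + f_1 + f_2$ with $f_1$ the degree-one part, takes $B = \{f_1 \ge 0\}$, and uses the elementary Lemma~\ref{lem:halfspace-linear-correlation}: $\E[f_1 1_B] = \tfrac12 \E|f_1| \ge \eps/40$, which requires nothing about $f$ beyond boundedness. The possible cancellation from the residual $f_2$ --- your anti-concentration worry --- is handled by smoothing rather than anti-concentration: $\E[1_B P_t f_1] = e^{-t}\E[1_B f_1]$ decays like $e^{-t}$ while $|\E[1_B P_t f_2]| \le e^{-2t}\sigma$ decays like $e^{-2t}$, so choosing $e^{-t} = \eps/(80\sigma)$ makes the linear signal dominate and gives $\Cov(P_t f, 1_B) \ge c\eps^2/\sigma$; Lemma~\ref{lem:smoothed-cov-to-cov} then transfers this to $f$ itself because $P_t 1_B$ is a convex combination of indicators of half-spaces. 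With that proposition in hand, your computation of $\E_Z[w_1(1_{B_Z})]$ (the paper's Proposition~\ref{prop:restriction-weight}, which works at a single scale $t$ without locating a dominant level $k^*$) plus reverse Markov completes Theorem~\ref{thm:boolean-restriction} and hence the theorem. As written, your proposal does not prove this step, and the route you sketch for it is not the one that works.
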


Since the notion of taking restrictions may seem artificial, it is natural
to ask whether taking restrictions in Theorem~\ref{thm:boolean-intro} is really necessary.
That is, could it be that $B^{(i)}$ noise stable already implies that $M(B^{(i)}) \not \to 0$?
In fact, this is not the case. As an example, take $n_m = n^2$ and consider the sets
$B^{(m)} \subset \{-1, 1\}^{n_m}$ defined by
\[
  B^{(m)} = \bigg\{x: \sum_{i=1}^m \bigg(\frac{1}{\sqrt m} \sum_{j=(i-1)m + 1}^{im} x_j \bigg)^2 \le m \bigg\}.
\]

\begin{proposition}
  The sets $B^{(m)}$ are noise stable, but $M(B^{(m)}) \le C m^{-1/200}$ for a universal constant $C$.
\end{proposition}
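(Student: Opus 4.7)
The proof splits into noise stability of $\{B^{(m)}\}$ and the bound $M(B^{(m)})=O(m^{-1/200})$.

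For noise stability, set $S_i(x) = m^{-1/2}\sum_{j=(i-1)m+1}^{im} x_j$ and $Q(x) = \sum_{i=1}^m S_i(x)^2 - m$, so $1_{B^{(m)}} = 1\{Q \le 0\}$. The polynomial $Q$ is multilinear of degree $2$, hence $P_t Q = e^{-2t} Q$. The plan is to lower-bound the $L^2$-projection of $1_{B^{(m)}}$ onto $\spn(Q)$: since $\E Q = 0$,
\[
\langle 1_{B^{(m)}}, Q \rangle = \E[Q\cdot 1\{Q \le 0\}] = -\tfrac{1}{2}\,\E|Q|,
\]
and since $Q = \sum_i(S_i^2 - 1)$ is a sum of $m$ i.i.d.\ mean-zero variables with variance $2 - O(1/m)$, a CLT/moment computation gives $\|Q\|_2^2 = 2m - O(1)$ and $(\E|Q|)^2 = 4m/\pi + o(m)$. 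Thus $\|\mathrm{proj}_Q 1_{B^{(m)}}\|_2^2 = (\E|Q|)^2/(4\|Q\|_2^2) \to (2\pi)^{-1}$. Because $P_t$ scales this component by $e^{-2t}$, $\Var(P_t 1_{B^{(m)}}) \ge e^{-4t}\,\|\mathrm{proj}_Q 1_{B^{(m)}}\|_2^2$ is bounded below by a positive constant for every fixed $t > 0$, establishing noise stability.

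For the upper bound on $M$, fix a half-space $H = \{\langle a, x\rangle \le b\}$. Since $1_{B^{(m)}}$ is even, only the even part of $1_H$ contributes, and case analysis on $\sgn(b)$ gives
\[
\tfrac{1}{2}(1_H(x) + 1_H(-x)) = \tfrac{1}{2} \pm \tfrac{1}{2}\cdot 1\{|\langle a, x\rangle| \le |b|\},
\]
so it suffices to bound $|\Cov(1_{B^{(m)}},\, 1\{|L| \le c\})|$ with $L = \langle a, x\rangle$, $c \ge 0$. Decompose $a_{i,j} = \alpha_i/\sqrt m + \beta_{i,j}$ with $\sum_j \beta_{i,j} = 0$, so $L = L_1 + L_2$ where $L_1 = \sum_i \alpha_i S_i$ and $L_2$ is orthogonal to every $S_i$ in $L^2(\mu_n)$. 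In the Gaussian analog (replacing each Rademacher $x_{i,j}$ by an i.i.d.\ $N(0,1)$), $L_2$ is truly independent of $S = (S_1,\ldots,S_m)$, and conditioning on $W = L_1/\|\alpha\| \sim N(0,1)$ together with $V = \|S\|^2 - W^2 \sim \chi^2_{m-1}$ independent of $W$ yields
\[
\Cov_{\mathrm{Gauss}}(1_{B^{(m)}},\, 1\{|L|\le c\}) = \Cov\bigl(F_{\chi^2_{m-1}}(m - W^2),\, g_2(W)\bigr),
\]
where $g_2(W) = \Pr(|\|\alpha\|W + L_2|\le c \mid W) \in [0,1]$. The $\chi^2$-CLT implies $\|F_{\chi^2_{m-1}}(m - W^2) - 1/2\|_{L^2} = O(m^{-1/2})$, so by Cauchy--Schwarz the Gaussian covariance is $O(m^{-1/2})$.

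The last step is transferring this Gaussian bound to the Boolean cube via the invariance principle of Mossel--O'Donnell--Oleszkiewicz. I would first smooth both indicators---$1_{B^{(m)}}$ at resolution $O(m^{-1/2})$ using $\chi^2$ anti-concentration of $\|S\|^2$ near $m$, and the slab indicator at resolution $\delta$ using Berry--Esseen anti-concentration of $L$---after which the invariance error is a small power of the max-influence parameter $\tau = \max_j a_j^2/\|a\|_2^2$. If $\tau$ is too large for invariance, I would peel off the $O(1/\tau)$ heavy coordinates, condition on them, and apply invariance to the light-coordinate slab on each fiber, using the block symmetry of $B^{(m)}$ to show that conditioning on a small set of coordinates perturbs $1_{B^{(m)}}$ only by $o(1)$ in $L^2$. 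The main obstacle is precisely this balancing act: matching the two smoothing widths, the invariance error, and the heavy-coordinate threshold uniformly over all half-spaces forces a multi-way trade-off, and this is exactly where the clean Gaussian exponent $1/2$ degrades to the quoted $1/200$.
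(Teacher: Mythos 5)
Your noise-stability argument is correct and is genuinely different from the paper's: you lower-bound the degree-two Fourier weight of $1_{B^{(m)}}$ by projecting onto $Q=\sum_i(S_i^2-1)$, computing $\langle 1_{B^{(m)}},Q\rangle=-\tfrac12\E|Q|$ and using the CLT to get $\|\mathrm{proj}_Q 1_{B^{(m)}}\|_2^2\to(2\pi)^{-1}$, whence $\Var(P_t1_{B^{(m)}})\ge e^{-4t}((2\pi)^{-1}-o(1))$. The paper instead transfers the Gaussian ball computation (surface area plus Ledoux's semigroup bound) to the cube via the CLT; your route is more self-contained, at the cost of a slightly worse constant for small $t$. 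Your Gaussian covariance bound is also correct and takes a different path: the decomposition $L=L_1+L_2$ with $L_2$ independent of $S$, followed by conditioning on $W$ and the $\chi^2_{m-1}$ CLT, replaces the paper's orthogonality argument ($1\ge\sum_i\E[1_{B}f_i]^2=n\,\E[1_Bf_1]^2$ by rotational invariance). The reduction to symmetric slabs via the even part of $1_H$ is a nice simplification not present in the paper.

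The genuine gap is in the transfer to the cube, specifically in your treatment of heavy coefficients. You propose to peel off the $O(1/\tau)$ coordinates with $a_j^2>\tau\|a\|_2^2$, condition on them, and apply invariance to the light-coordinate slab on each fiber. But the invariance error for the conditioned slab is governed by $\max_j a_j^2/\|a^{\mathrm{light}}\|_2^2$, not by $\max_j a_j^2/\|a\|_2^2$: if the light part carries only a tiny fraction of $\|a\|_2^2$ (e.g.\ geometrically decaying coefficients), it can remain completely unbalanced relative to its own norm no matter how many coordinates you peel off, and iterating the peeling need not terminate. This sub-case requires a separate idea, which the paper supplies in Lemma~\ref{lem:lo-case}: if $\|a^-\|_\infty\ge m^{-1/24}\|a^-\|_2$ after removing the top $m^{1/3}$ coordinates, then the Littlewood--Offord anticoncentration theorem applied to those top coordinates shows that $\sum_{j\in J^*}a_jX_j$ rarely lands within the window of width $\sim m^{1/24}\|a^-\|_2$ around $b$ that the light part can influence, so the half-space is essentially a function of $m^{1/3}$ coordinates; one then checks via Berry--Esseen that $B^{(m)}$ is nearly independent of any such small set. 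Your sketch correctly flags the parameter balancing as the source of the exponent $1/200$, but the missing ingredient here is not a matter of tuning---it is the anticoncentration step that disposes of half-spaces whose tails never become balanced.
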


\subsection{The Gaussian setting}

The preceding results also make sense in a Gaussian setting:
Let $\gamma_n$ denote the standard Gaussian measure on $\R^n$ and
write $P_t$ for the Ornstein-Uhlenbeck semigroup, defined by
\[
    P_t f(x) = \E f(e^{-t} x + \sqrt{1-e^{-2t}} X), \qquad X \sim \gamma_n.
\]
(Here and elsewhere we will reuse symbols that we also used in the boolean setting; however,
the meaning should always be clear from the context.)
The \emph{Gaussian noise stability} of a set $A \subset \R^n$ is
\[
    \noisestab_t(A) = \E [1_A P_t 1_A].
\]
As in the boolean case, we have $\noisestab_t(A) - \gamma_n(A)^2 = \Var(P_{t/2} 1_A)$;
we say that a sequence $A_i$ of sets is \emph{noise sensitive} if
$\Var(P_t 1_{A_i}) \to 0$ for all $t > 0$, and we say that $A_i$ is \emph{noise stable} otherwise.
A \emph{half-space} is a set of the form $\{x \in \R^n: \inr x a \le b\}$; write $\calH_n$ for the set
of all half-spaces in $\R^n$ and define
\[
    M(A) = \sup_{A \in \calH_n} \Cov(1_A, 1_B).
\]

In the setting above, we prove that a sequence of sets is noise stable if and only if
by scaling and randomly shifting it, we make them correlated with half-spaces.
Specifically, given $B \subset \R^n$, $t \ge 0$, and $y \in \R^n$, define
\[
  B_{t,y} = \{x \in \R^n: \sqrt{1-e^{-2t}} c + e^{-t} y \in B\}.
\]

\begin{theorem}\label{thm:gaussian-intro}
  The sequence $B^{(i)} \subset \R^n$ is noise stable if and only if there are
  some $t, \epsilon > 0$ such that for all sufficiently large $i$,
  $M(B^{(i)}_{t,Y}) \ge \epsilon$ with probability at least $\epsilon$, where
  $Y \sim \gamma_n$.
\end{theorem}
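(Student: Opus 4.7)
The plan is to treat both directions of the equivalence separately, with the central computation in both being the identity (an immediate consequence of Stein's integration by parts, or of the Hermite addition formula) that, writing $f := 1_B$ and $T_{t,y}f(x) := f(e^{-t}y + \sqrt{1-e^{-2t}}x) = 1_{B_{t,y}}(x)$, the vector of degree-$1$ Hermite coefficients of $T_{t,y}f$ in the $x$-variable equals $\sqrt{1-e^{-2t}}\,(P_t\grad f)(y) = \sqrt{e^{2t}-1}\,\grad P_tf(y)$. Thus the random vector $\grad P_tf(Y)$ with $Y \sim \gamma_n$ governs, at first order, how correlated the typical restriction $B_{t,Y}$ is with half-spaces.

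For the forward direction, assume $\Var(P_{t_0}f) \ge \delta$ for some $t_0,\delta > 0$. Gaussian Poincar\'e applied to $P_{t_0}f$ gives $\E[\|\grad P_{t_0}f\|^2] \ge \delta$; meanwhile Stein's identity $\grad P_{t_0}f(y) = (e^{-t_0}/\sqrt{1-e^{-2t_0}})\,\E[X f(e^{-t_0}y + \sqrt{1-e^{-2t_0}}X)]$ followed by Cauchy--Schwarz yields the dimension-free pointwise bound $\|\grad P_{t_0}f\|^2 \le e^{-2t_0}/(1-e^{-2t_0})$. Combining these, a first-moment argument produces an event of $\gamma_n$-measure at least $c_{t_0}\delta$ on which $\|\grad P_{t_0}f(Y)\|^2 \ge \delta/2$. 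On this event the degree-$1$ Hermite mass of $T_{t_0,Y}f$ equals $(e^{2t_0}-1)\|\grad P_{t_0}f(Y)\|^2 \ge (e^{2t_0}-1)\delta/2$, so $T_{t_0,Y}f$ has correlation at least $\sqrt{(e^{2t_0}-1)\delta/2}$ with the unit linear functional in the direction of $\grad P_{t_0}f(Y)$; a one-dimensional rearrangement (choose an optimal threshold in that direction, using that $T_{t_0,Y}f$ is $[0,1]$-valued) then produces a half-space $H$ with $\Cov(1_{B_{t_0,Y}}, 1_H)$ bounded below by a positive constant depending on $t_0,\delta$, completing the forward direction.

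For the converse, suppose $M(B_{t,Y}) \ge \epsilon$ with probability at least $\epsilon$, and on each good $Y$ fix the witnessing half-space $H_Y = \{\inr{a_Y}{x} \le b_Y\}$; Cauchy--Schwarz forces $\mu(H_Y)(1-\mu(H_Y)) \ge 4\epsilon^2$. I would pass to the joint Gaussian pair $(U,Y)$ with $U := e^{-t}Y + \sqrt{1-e^{-2t}}X \sim \gamma_n$ and coordinate-wise correlation $e^{-t}$; the affine change of variables carries $H_Y$ in the $X$-variable to a half-space $H'_Y \subset \R^n$ in the $U$-variable satisfying $\E_{U|Y}[1_{H'_Y}] = \mu(H_Y)$. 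The law of total covariance gives
\[
\epsilon^2 \le \E_Y[\Cov_X(1_{B_{t,Y}}, 1_{H_Y})] = \Cov_{U,Y}(1_B(U), 1_{H'_Y}(U)) - \Cov_Y(P_t 1_B(Y), \mu(H_Y)).
\]
Either $\Var(P_t 1_B) \gtrsim \epsilon^4$, in which case $B$ is already noise stable at time $t$, or $\Cov_{U,Y}(1_B(U), 1_{H'_Y}(U)) \gtrsim \epsilon^2$; marginalizing over $Y$, the latter gives $\Cov_U(1_B,\psi) \gtrsim \epsilon^2$ with $\psi(u) := \E_{Y|U=u}[1_{H'_Y}(u)]$.

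The remaining and main step is to show that $\psi$ has Hermite spectrum effectively supported at bounded degree, with quantitative tail depending only on $t$ and $\epsilon$: morally, $\psi$ is the conditional expectation of a half-space indicator under a Gaussian conditional distribution of variance $\sim 1-e^{-2t}$, which mollifies $1_{H'_Y}$ and should damp high Hermite frequencies. Once such a tail bound on $\psi$ is in hand, splitting $\Cov(1_B,\psi)$ into low- and high-degree Hermite projections forces the low-degree Hermite mass of $1_B$ to be bounded below, so $\Var(P_s 1_B) \ge \delta(t,\epsilon)$ for some $s > 0$. The hard part will be making this smoothing estimate uniform across the possibly irregular random family $\{H_Y\}$ -- in particular, controlling the extreme regime $\mu(H_Y) \sim \epsilon^2$, where the level-$1$ component of $1_{H_Y}$ contributes only a vanishing fraction of its total variance.
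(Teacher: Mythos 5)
Your forward direction is essentially the paper's argument and is sound: the identity that the degree-one Hermite coefficients of $f_{t,y}$ equal $\sqrt{e^{2t}-1}\,\grad P_t f(y)$, combined with the Poincar\'e inequality applied to $P_tf$ and a reverse-Markov step, is exactly how the paper proceeds (Proposition~\ref{prop:exp-w1}); your conversion from ``large degree-one mass'' to ``correlated half-space'' via a layer-cake/optimal-threshold argument in the gradient direction is a legitimate variant of Proposition~\ref{prop:gaussian-w1-to-halfspace} (the paper instead smooths with $P_t$ to kill the high-degree part and then un-smooths via Lemma~\ref{lem:gaussian-smoothed-cov-to-cov}); either route yields a bound depending only on $t$ and $\Var(P_tf)$, which is all the qualitative statement needs.

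The converse, however, has a genuine gap, and it is exactly the step you flag as ``remaining and main.'' Your law-of-total-covariance reduction is correct, but it leaves you needing a low-degree spectral concentration bound for $\psi(u)=\E_{Y\mid U=u}[1_{H'_Y}(u)]$ that is uniform over an arbitrary measurable family of half-spaces $\{H_Y\}$. Nothing forces $\psi$ to be smooth: conditioning on $U=u$ only localizes $Y$ to a Gaussian of variance $1-e^{-2t}$ around $e^{-t}u$, and since the half-space $H'_y$ may vary arbitrarily with $y$, this averaging need not damp high Hermite frequencies of $\psi$ in any controlled way. Any proof of such a bound would have to invoke, in some form, the quantitative noise stability of half-spaces --- and this is precisely the ingredient the paper uses, but in a way that avoids your aggregation problem entirely. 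The paper's route is: (i) Ledoux's bound gives $\E[(1_A-P_t1_A)^2]\le\frac{1}{\pi}\arccos(e^{-t})$ for every half-space (Lemma~\ref{lem:half-space-l2}); (ii) decomposing $f-\E f$ along $1_A-\gamma_n(A)$ and its orthogonal complement shows that \emph{any} $[0,1]$-valued function correlated with a half-space inherits its stability, $\Var(P_tg)\ge 4\Cov(1_A,g)^2-Ct^{1/4}$ (Proposition~\ref{prop:correlation-to-stability}); (iii) this is applied separately to each restriction $f_{s,Y}$, and the exact identity $\Var(P_rf)=\E_Y\Var(P_tf_{s,Y})+\Var(P_sf)$ (valid when $e^{-2r}=e^{-2s}+e^{-2t}-e^{-2s-2t}$), together with $\Var(P_sf)\le e^{-2(s-r)}\Var(P_rf)$, transfers the lower bound on $\E_Y\Var(P_tf_{s,Y})$ to $\Var(P_rf)$. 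Working restriction-by-restriction in this way sidesteps the need to control the global object $\psi$; without steps (i)--(iii) or an equivalent substitute, your converse is not complete.
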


As in the boolean case, one can find examples showing that Theorem~\ref{thm:gaussian-intro}
would be false if we didn't introduce the scaling and random shifting. In this
case, the example is very easy: let $B^{(n)} \subset \R^n$ be the Euclidean ball of radius
$\sqrt n$.

\begin{proposition}
  The sets $B^{(n)}$ are noise stable, but $M(B^{(n)}) \le n^{-1/2}$.
\end{proposition}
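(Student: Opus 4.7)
The plan is to exploit the rotational invariance of both $\gamma_n$ and the ball $B^{(n)}$: the indicator $f := \mathbf{1}_{B^{(n)}}$ depends only on $|x|^2$, so every nonzero Hermite coefficient of $f$ lives in the rotation-invariant subspace. This immediately kills every degree-$1$ Hermite coefficient, which is morally why $f$ cannot correlate well with any half-space. In degree $2$ there is a unique rotation-invariant normalized Hermite polynomial, namely $H(x) := (|x|^2 - n)/\sqrt{2n}$, and I write $\hat f_{(2)} := \E[f(X) H(X)]$ for the corresponding (entire) degree-$2$ Hermite mass of $f$.

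For noise stability, since $P_{t/2}$ multiplies each degree-$k$ Hermite coefficient by $e^{-tk/2}$, one has $\Var(P_{t/2} f) \ge e^{-2t}\, \hat f_{(2)}^2$, so it suffices to show $\hat f_{(2)}^2$ is bounded away from zero uniformly in $n$. The chi-square identity $y f_n(y) = n f_{n+2}(y)$ gives
\[
\E\bigl[(|X|^2 - n)\mathbf{1}_{|X|^2 \le n}\bigr] \;=\; n\bigl(\P(\chi^2_{n+2} \le n) - \P(\chi^2_n \le n)\bigr).
\]
Writing $\chi^2_{n+2}$ as an independent sum $\chi^2_n + \chi^2_2$ and Taylor-expanding the CDF $F_n$ about $n$, the right-hand side becomes $-2 n f_n(n) + O(n^{-1/2})$. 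The Stirling (or local-CLT) estimate $f_n(n) \sim 1/\sqrt{4\pi n}$ then yields $\hat f_{(2)}^2 \to 1/(2\pi)$, so $\Var(P_{t/2} f) \ge c(t) > 0$ for all large $n$.

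For the bound $M(B^{(n)}) \le n^{-1/2}$, I would use rotational invariance to reduce any half-space $\{\inr{x}{a} \le b\}$ to one of the form $\{x_1 \le c\}$. Set $h(x) := \P(\chi^2_{n-1} \le n - x^2) = \E[f(X) \mid X_1 = x]$; the tower property and Cauchy--Schwarz give
\[
|\Cov(f, \mathbf{1}_{X_1 \le c})| \;=\; |\Cov(h(X_1), \mathbf{1}_{X_1 \le c})| \;\le\; \tfrac12\sqrt{\Var(h(X_1))}.
\]
The Gaussian Poincar\'e inequality then yields $\Var(h(X_1)) \le \E[h'(X_1)^2] = 4\E[X_1^2 f_{n-1}(n - X_1^2)^2] \le 4\|f_{n-1}\|_\infty^2$, and a Stirling computation shows $\|f_{n-1}\|_\infty = O(1/\sqrt n)$, producing the $n^{-1/2}$ scaling (with constants tight enough for the stated bound at large $n$; the handful of small-$n$ cases are trivial since $n^{-1/2} \ge 1/\sqrt{n_0}$ is not a restrictive bound there).

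The only genuinely nontrivial analytic input is the sharp pointwise control of the chi-square density near its mean: both steps rest on either the local-CLT value $f_n(n) \sim 1/\sqrt{4\pi n}$ or the corresponding uniform bound $\|f_n\|_\infty = O(1/\sqrt n)$. I expect this chi-square asymptotic to be the one place requiring careful Stirling bookkeeping; everything else is rotational symmetry, projection onto the low-degree rotation-invariant Hermite subspace, and the Gaussian Poincar\'e inequality.
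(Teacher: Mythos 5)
Your proposal is correct, but it takes a genuinely different route from the paper in both halves. For noise stability, the paper computes the Gaussian perimeter of $B^{(n)}$ (the sphere of radius $\sqrt n$ has constant Gaussian density, so Stirling gives perimeter $\sim \pi^{-1/2}$) and then invokes Ledoux's bound $\E[1_B(1_B - P_t 1_B)] \le \arccos(e^{-t}) P / \sqrt{2\pi}$, which yields the sharper lower bound $\tfrac14 - \arccos(e^{-2t})/(\sqrt 2 \pi) - o_n(1)$ tending to $\Var(1_{B^{(n)}})$ as $t \to 0$; your projection onto the unique rotation-invariant degree-$2$ Hermite polynomial gives only $e^{-2t}/(2\pi) - o_n(1)$, but it suffices for the qualitative claim and avoids Ledoux's theorem at the cost of the chi-square local-CLT computation $f_n(n) \sim (4\pi n)^{-1/2}$ (your identity $y f_n(y) = n f_{n+2}(y)$ and the resulting limit $\hat f_{(2)}^2 \to 1/(2\pi)$ check out). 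For the half-space bound, the paper's argument is a three-line Bessel inequality: the $n$ functions $f_i = 1_{\{x_i \le b\}} - \Phi(b)$ are orthogonal with $\|f_i\|_2 \le 1$, so $1 \ge \|1_{B^{(n)}}\|_2^2 \ge \sum_i \E[1_{B^{(n)}} f_i]^2 = n\,\E[1_{B^{(n)}} f_1]^2$ by rotational symmetry, giving exactly $n^{-1/2}$ with no analysis of the ball's structure; this same argument is what drives Proposition~\ref{prop:no-direction}. Your conditioning-plus-Poincar\'e argument also works and in fact yields the better constant $\approx (2\sqrt\pi)^{-1} n^{-1/2}$, but it requires the extra input $\|f_{n-1}\|_\infty = O(n^{-1/2})$ and is specific to the ball, whereas the Bessel argument uses only rotational invariance and $\|1_B\|_2 \le 1$. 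Both of your steps are sound; the only places needing the bookkeeping you already flagged are the uniform control of the Taylor remainder over the unbounded $\chi^2_2$ variable (harmless, by its exponential tail) and the nonasymptotic form of $\|f_{n-1}\|_\infty \le n^{-1/2}$, for which your margin of roughly $2\sqrt\pi$ together with the triviality of the bound for $n \le 16$ leaves ample room.
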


One can learn a little more from this example. First, note that any
restrictions of $B^{(n)}$ are also Euclidean balls. In the Gaussian setting,
therefore, unlike in the boolean one, noise stability does not imply that
random restrictions are correlated with half-spaces.  Another observation
(since $B^{(n)}$ is rotationally invariant) is that noise stable sets do not
necessarily ``encode'' directions. We make this more precise in
Proposition~\ref{prop:no-direction}, which says that even though random shifts
and scalings of $B^{(n)}$ are correlated with half-spaces, the directions
in which those half-spaces point are unpredictable.

\subsection{Motivation} 
Our work is motivated by extending the results of~\cite{BeKaSc:99} to non-monotone functions, as well by the following motivations:

\begin{itemize}
\item 
 In a recent work Khot and Moshkovitz~\cite{KhotMoshkovitz:16}, proposed a Lasserre integrality gap for the Unique Games problem. The proposed construction is based on the assumption that in a certain family of functions, the most stable functions are half-spaces. More specifically~\cite{KhotMoshkovitz:16} considers $f : \R^n \to \{-1,+1\}$ which satisfy
\[
f(-x) = f(x+e_i) = -f(x),
\]
for all $x$ and for the standard basis vectors $e_i$; they asked whether the
most stable functions in this family are of the form $\sgn(\sum_i \sigma_i
x_i)$ where $\sigma_i \in \{-1,1\}^n$, and also whether every function that is
almost as noise stable as possible must be correlated with a function of this
form.

In this context, it is natural to ask whether every noise stable function is
correlated with a half-space.  This is the question we address in this paper.
However, since our functions are not required to satisfy $f(x + e_i) = -f(x)$,
our results and examples do not have direct implications for the proposed
Lasserre integrality gap instances.

\item
It is well known that the class of functions having a constant fraction (resp. most) of their Fourier mass 
on ``low'' coefficients can be weakly (resp. strongly) learned under the uniform distribution~\cite{LiMaNi:93,KlOdSe:04}. 
In particular, noise stable functions can be weakly learned. 
On the other hand, the most classical learning algorithms involve learning half-spaces. 
Thus it is natural to ask if there is more direct relation between the weak learnability of noise stable functions and the learnability of half-spaces. Our examples seem to provide a negative answer to this question. 

\end{itemize}

\section{The Gaussian case}
For this section, let $X \sim \gamma_n$.
Recall that the Ornstein-Uhlenbeck semi-group is defined by
\[
P_t f(x) = \E f(e^{-t} x + \sqrt{1-e^{-2t}} X).
\]
For $t \in \R$ and $y \in \R^n$, define $f_{t,y}$ by
$f_{t,y}(x) = f(\sqrt{1-e^{-2t}}x + e^{-t} y)$.

\begin{theorem}\label{thm:gaussian}
  For any measurable $f: \R^n \to [0, 1]$ and any $t > 0$,
  \[
      \E M(f_{t,Y}) \ge c (e^{2t} - 1) \Var(P_t f),
  \]
  where $c > 0$ is a universal constant and $Y \sim \gamma_n$.
\end{theorem}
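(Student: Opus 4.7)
My plan is to produce, from $\nabla P_t f(y)$, a good candidate half-space correlated with $f_{t,y}$, and then convert the resulting $L^2$-bound into a lower bound on $\Var(P_t f)$ via Gaussian Poincar\'e. Gaussian integration by parts inside $P_t f(y) = \E[f(e^{-t} y + \sqrt{1-e^{-2t}}\, X)]$ yields the representation
\[
\nabla P_t f(y) \;=\; \frac{e^{-t}}{\sqrt{1-e^{-2t}}}\, v(y), \qquad v(y) := \E[X\, f_{t,y}(X)],
\]
so the natural candidate direction for a half-space correlated with $f_{t,y}$ is $a = v(y)/\|v(y)\|$.

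The heart of the argument is a key lemma: for any measurable $g:\R^n \to [0,1]$,
\[
M(g) \;\ge\; c_0\, \|\E[X\, g(X)]\|^2
\]
for a universal $c_0 > 0$. Writing $v = \E[X g(X)]$, $a = v/\|v\|$ and $h(s) = \E[g(X) \mid \inr X a = s]$, the conditional function $h$ takes values in $[0,1]$ and satisfies $\E[h(Z) Z] = \|v\|$ for $Z \sim \gamma_1$, so it suffices to establish the one-dimensional claim: if $h:\R \to [0,1]$ and $\alpha := \E[h(Z) Z]$, then
\[
M_1(h) := \sup_b |G(b)| \;\ge\; c_0\, \alpha^2, \qquad G(b) := \Cov(h(Z), 1_{\{Z \le b\}}).
\]
To prove this, combine the telescoping identity $\alpha = -\int_{-\infty}^\infty G(b)\,db$ (obtained from $Z = \int_0^\infty 1_{\{Z > s\}}\,ds - \int_{-\infty}^0 1_{\{Z \le s\}}\,ds$) with the two pointwise bounds $|G(b)| \le M_1$ and $|G(b)| \le \min(\Phi(b), 1-\Phi(b))$ (the latter from $h \in [0,1]$). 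Substituting the pointwise minimum and balancing at the point $b^*$ with $1-\Phi(b^*) = M_1$ gives $|\alpha| \le 2\phi(b^*)$; since $|\alpha|^2 \le 4\phi(b^*)^2$ and $M_1 = 1-\Phi(b^*)$, a short check shows that $4\phi(b^*)^2/(1-\Phi(b^*))$ is bounded by a universal constant for $b^* \ge 0$, proving $M_1 \ge c_0 \alpha^2$ in the regime $M_1 < 1/2$; the regime $M_1 \ge 1/2$ is trivial because $|\alpha| \le \sqrt{2/\pi}$.

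To close the proof, the Gaussian Poincar\'e inequality applied to the smooth function $P_t f$ gives $\Var(P_t f) \le \E\|\nabla P_t f\|^2$. Applying the key lemma to $g = f_{t,Y}$ and combining with the gradient identity,
\[
\E M(f_{t,Y}) \;\ge\; c_0\, \E\|v(Y)\|^2 \;=\; c_0\, \frac{1-e^{-2t}}{e^{-2t}}\, \E\|\nabla P_t f\|^2 \;\ge\; c_0\,(e^{2t}-1)\, \Var(P_t f).
\]

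The main obstacle is the one-dimensional inequality $M_1 \ge c_0\alpha^2$: the reduction to 1D by projection onto $v/\|v\|$ is routine, but extracting a \emph{quadratic} (not linear) lower bound in $\alpha$ from the linear identity $\alpha = -\int G\,db$ requires carefully balancing the uniform bound $|G| \le M_1$ against the Gaussian tail bound $|G| \le \min(\Phi, 1-\Phi)$, via the elementary calculation $\int_{b^*}^\infty (1-\Phi(b))\,db = \phi(b^*) - b^*(1-\Phi(b^*))$.
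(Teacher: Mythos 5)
Your proof is correct, and while its outer skeleton coincides with the paper's (both pass through the identity $\E[X f_{t,y}(X)] = \sqrt{e^{2t}-1}\,\grad P_t f(y)$ and then apply the Gaussian Poincar\'e inequality to $P_t f$, exactly as in Proposition~\ref{prop:exp-w1}), your proof of the key step --- that a function $g:\R^n\to[0,1]$ with $\|\E[Xg(X)]\|=\epsilon$ is $\Omega(\epsilon^2)$-correlated with a half-space --- is genuinely different from Proposition~\ref{prop:gaussian-w1-to-halfspace}. The paper decomposes $g$ into its Hermite degree-one part plus an orthogonal remainder, correlates the smoothed function $P_s g$ with the half-space $\{x_1\ge 0\}$ for an optimized $s$, and then transfers the correlation back from $P_s g$ to $g$ via Lemma~\ref{lem:gaussian-smoothed-cov-to-cov} (writing $P_s 1_A$ as a mixture of half-space indicators). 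You instead project onto $a=v/\|v\|$, reduce to the one-dimensional statement $\sup_b|\Cov(h(Z),1_{\{Z\le b\}})|\ge c\,\E[h(Z)Z]^2$, and prove that by combining the layer-cake identity $\E[h(Z)Z]=-\int G(b)\,db$ with the two pointwise bounds $|G|\le M_1$ and $|G|\le\min(\Phi,1-\Phi)$; balancing at $1-\Phi(b^*)=M_1$ gives $|\alpha|\le 2\phi(b^*)$ and the quadratic bound follows since $\phi(b)^2/(1-\Phi(b))$ is bounded on $[0,\infty)$. I checked the details: the reduction to one dimension, the telescoping identity, the computation $\int_{b^*}^\infty(1-\Phi)=\phi(b^*)-b^*(1-\Phi(b^*))$, and the two edge regimes all hold, and the final constant tracking reproduces $c(e^{2t}-1)\Var(P_t f)$. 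Your route is more elementary and self-contained (no second use of the semigroup, no mixture-of-half-spaces lemma), and it identifies the optimal threshold $b$ more explicitly; what it gives up is the extra factor $1/\sigma$ with $\sigma^2=\Var(g)$ that the paper's Proposition~\ref{prop:gaussian-w1-to-halfspace} provides, but that factor is not needed for Theorem~\ref{thm:gaussian}. The only point worth making explicit is that for merely measurable $f$ the gradient identity for $\grad P_t f$ should be obtained by differentiating the Mehler kernel (or by smooth approximation, as the paper does), but this is routine.
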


\subsection{An example}\label{sec:gaussian-example}
It is natural to ask whether one needs to replace $f$ by $f_{t,y}$ in order to find
a correlated half-space. Indeed, a simple example shows that $f$ itself may not
be correlated with a half-space: let $B_n \subset \R^n$ be the Euclidean ball of radius $\sqrt n$.
First, we note that for sufficiently small $t$, $\Var(P_t 1_{B_n})$ is bounded away from zero as
$n \to \infty$. (This is already well-known~\cite{Kane:11}, since $B_n$ is obtained by
thresholding a quadratic function, but the computation in our special case is quite easy.)

\begin{proposition}\label{prop:example-gaussian-stable}
  For any $n$ and any $t > 0$,
  \[
    \Var(P_t 1_{B_n}) \ge \frac 14 - \frac{\arccos(e^{-2t})}{\sqrt 2 \pi} - o_n(1).
  \]
  In particular $B_n$ is noise stable.
\end{proposition}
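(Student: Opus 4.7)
The plan is to reduce everything to a bivariate central limit theorem applied to $(\|X\|^2, \|Y\|^2)$, where $Y$ is the Ornstein--Uhlenbeck noised version of $X$. I would start from the identity $\Var(P_t 1_{B_n}) = \noisestab_{2t}(B_n) - \gamma_n(B_n)^2$ and write $Y = e^{-2t} X + \sqrt{1-e^{-4t}}\, Z$ for independent $X, Z \sim \gamma_n$, so that $\noisestab_{2t}(B_n) = \P(\|X\|^2 \le n,\, \|Y\|^2 \le n)$ and each pair $(X_i, Y_i)$ is a standard bivariate Gaussian with correlation $\rho := e^{-2t}$.

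The next step is a direct moment computation. Using Isserlis' formula, $\E[X_i^2 Y_i^2] = 1 + 2\rho^2 = 1 + 2e^{-4t}$, so $\Cov(X_i^2, Y_i^2) = 2e^{-4t}$ while $\Var(X_i^2) = 2$. Since the pairs $(X_i^2, Y_i^2)$ are i.i.d.\ with finite variance, the bivariate CLT gives
\[
  \left(\frac{\|X\|^2 - n}{\sqrt{2n}},\; \frac{\|Y\|^2 - n}{\sqrt{2n}}\right) \toD \calN\!\left(0, \begin{pmatrix} 1 & e^{-4t} \\ e^{-4t} & 1 \end{pmatrix}\right).
\]
For $t > 0$ the limit is non-degenerate, so invoking the portmanteau theorem at the continuity point $(0,0)$, together with Sheppard's orthant formula $\P(U \le 0, V \le 0) = \tfrac{1}{4} + \tfrac{1}{2\pi}\arcsin(r)$ for a standard bivariate Gaussian with correlation $r$, yields $\noisestab_{2t}(B_n) \to \tfrac{1}{4} + \tfrac{1}{2\pi}\arcsin(e^{-4t})$ and $\gamma_n(B_n) \to \tfrac{1}{2}$. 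Subtracting and using $\arcsin + \arccos = \pi/2$ gives $\Var(P_t 1_{B_n}) = \tfrac{1}{4} - \tfrac{1}{2\pi}\arccos(e^{-4t}) + o_n(1)$.

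The final step is cosmetic: to match the form stated in the proposition, I would verify the elementary inequality $\arccos(e^{-4t}) \le \sqrt{2}\,\arccos(e^{-2t})$ for $t \ge 0$. Writing $u = e^{-2t} \in (0, 1]$ and differentiating both sides in $t$ reduces this to $\tfrac{2u^2}{\sqrt{1+u^2}} \le \sqrt{2}\, u$, equivalently $2(1+u^2) \ge 4u^2$, i.e.\ $u \le 1$; since both sides of the original inequality vanish at $t = 0$, integrating the derivative inequality yields the claim on all of $[0, \infty)$. Combining this with the previous display gives $\Var(P_t 1_{B_n}) \ge \tfrac{1}{4} - \tfrac{1}{\sqrt{2}\pi}\arccos(e^{-2t}) - o_n(1)$, as required. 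The only mildly delicate point in the argument is the passage from convergence in distribution to convergence of the orthant probability, which relies on non-degeneracy of the limit for $t > 0$; the rest is finite-dimensional Gaussian calculation and a short calculus check.
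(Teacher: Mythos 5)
Your proof is correct, but it takes a genuinely different route from the paper's. The paper computes the Gaussian perimeter of $B_n$ explicitly (via Stirling's formula it is $\sim 1/\sqrt{\pi}$) and then applies Ledoux's bound $\E[1_B(1_B - P_t 1_B)] \le \arccos(e^{-t})P/\sqrt{2\pi}$, which converts a perimeter estimate into a noise-stability estimate. You instead write $\noisestab_{2t}(B_n)$ as a bivariate orthant probability for $(\|X\|^2, \|Y\|^2)$, apply the two-dimensional CLT together with Sheppard's formula, and land on the \emph{exact} limit $\Var(P_t 1_{B_n}) \to \tfrac14 - \tfrac{1}{2\pi}\arccos(e^{-4t})$, which you then weaken via the calculus inequality $\arccos(e^{-4t}) \le \sqrt{2}\arccos(e^{-2t})$ to recover the stated form. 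All the individual steps check out: the Isserlis computation, the non-degeneracy of the limit covariance for $t>0$ (which justifies passing to the orthant probability, since the boundary of the quadrant is a null set for the limit law), and the derivative comparison (both sides vanish at $t=0$ and the derivative inequality reduces to $u \le 1$). Your approach buys more in one sense — the sharp asymptotic constant rather than just a lower bound — and is elementary, needing no isoperimetric input. The paper's approach buys uniformity: its $o_n(1)$ error is uniform in $t$ (the Stirling error multiplies a bounded quantity), whereas your CLT error degenerates as $t \to 0$ because the limiting correlation tends to $1$; this is harmless for the conclusion that $B_n$ is noise stable (a fixed-$t$ statement), but worth noting if one reads the $o_n(1)$ in the proposition as uniform. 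The paper's method also generalizes immediately to any set with bounded Gaussian perimeter, while yours exploits the specific quadratic structure of the ball.
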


\begin{proof}
  For a set of $B$ of smooth boundary, we may define the \emph{Gaussian perimeter} of
  $B$ as
  \[
  \int_{\partial B} \diff{\gamma_n}{\lambda}(x)\, d\calH_{n-1}(x),
  \]
  where $\calH_{n-1}$ denotes the $(n-1)$-dimensional Hausdorff measure and
  $\diff{\gamma_n}{\lambda}$ denotes the Gaussian density with respect to the Lebesgue
  measure.
  Since the Gaussian density restricted to $\partial B_n$ takes the constant
  value $(2\pi e)^{-n/2}$ and the Euclidean surface area of
  $B_n$ is $\sqrt n^{n-1} \cdot 2 \pi^{n/2} / \Gamma(n/2)$, it follows that
  the Gaussian perimeter of $B_n$ is
  \[
  \frac{2 n^{n/2 - 1/2}}{(2e)^{n/2} \Gamma(n/2)} \sim \frac{1}{\sqrt{\pi}},
  \]
  where the approximation follows from Stirling's formula.

  On the other hand, Ledoux~\cite{Ledoux:94} proved that if $P$ is the Gaussian perimeter of $B$
  then
  \[
  \E [1_B(1_B - P_t 1_B)] \le \frac{\arccos(e^{-t}) P}{\sqrt{2\pi}}.
  \]
  Plugging in our asymptotics for the Gaussian perimeter of $B_n$, we have
  \[
  \E [1_{B_n}(1_{B_n} - P_t 1_{B_n})] \le (1 + o_n(1))\frac{\arccos(e^{-t})}{\sqrt{2} \pi}.
  \]
  Since $P_t = P_{t/2} P_{t/2}$ and $P_{t/2}$ is self-adjoint, this may be rearranged into
  \[
  \E [(P_{t/2} 1_{B_n})^2] \ge \Pr(B_n) - (1 + o_n(1)) \frac{\arccos(e^{-t})}{\sqrt{2} \pi}.
  \]
  Since $\Pr(B_n) = \frac 12 + o_n(1)$, this proves the claim.
\end{proof}

Next, we observe that $B_n$ is not correlated with any half-space:
\begin{proposition}\label{prop:example-gaussian-no-half-space}
  $M(B_n) \le n^{-1/2}$.
\end{proposition}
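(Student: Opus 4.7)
The plan is to reduce the problem to a one-dimensional variance estimate by conditioning, and then apply the Gaussian Poincar\'e inequality. By the rotational symmetry of both $\gamma_n$ and the ball $B_n$, every half-space gives the same covariance with $B_n$ as one of the form $H = \{x_1 \le b\}$ for some $b \in \R$, so it suffices to bound $\Cov(1_{\{X_1 \le b\}}, 1_{B_n})$ uniformly in $b$. Writing $X = (X_1, X') \sim \gamma_n$ with $X' \in \R^{n-1}$, one has $\|X\|^2 = X_1^2 + \|X'\|^2$ with $\|X'\|^2 \sim \chi^2_{n-1}$ independent of $X_1$; conditioning on $X_1$ and using that $1_H$ is itself a function of $X_1$,
\[
    \Cov(1_H, 1_{B_n}) = \Cov\bigl(1_{\{X_1 \le b\}},\, g(X_1)\bigr),
\]
where $g(t) := F(n - t^2)$ and $F$ is the CDF of $\chi^2_{n-1}$ (extended by zero on the negatives).

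Next I would use Cauchy-Schwarz together with the bound $\Var(1_{\{X_1 \le b\}}) \le 1/4$ to get $|\Cov(1_H, 1_{B_n})| \le \tfrac 12 \sqrt{\Var g(X_1)}$, reducing the problem to showing $\Var(g(X_1)) = O(1/n)$. Since $g$ is absolutely continuous with $g'(t) = -2t f(n - t^2) 1_{\{t^2 < n\}}$ (where $f$ denotes the density of $\chi^2_{n-1}$), the Gaussian Poincar\'e inequality applied to $g$ yields
\[
    \Var(g(X_1)) \le \E[g'(X_1)^2] = 4\, \E\bigl[X_1^2 f(n - X_1^2)^2\bigr].
\]

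To finish, I would bound $\sup_x f(x) = O(1/\sqrt n)$ by applying Stirling's formula at the mode of $\chi^2_{n-1}$ (which lies at $x = n-3$ for $n \ge 4$) --- this is essentially the same asymptotic calculation already carried out for the Gaussian perimeter of $B_n$ in the proof of Proposition~\ref{prop:example-gaussian-stable}. Crudely bounding $f(n - X_1^2)^2 \le (\sup f)^2$ and using $\E X_1^2 = 1$ gives $\E[g'(X_1)^2] = O(1/n)$, and so $|\Cov(1_H, 1_{B_n})| = O(n^{-1/2})$; the implicit constant is less than $1$ for $n$ past a small threshold, and small $n$ is handled by the trivial bound $M(B_n) \le 1/4 \le n^{-1/2}$. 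The only computation in the argument is the Stirling estimate on $\sup f$, which I do not expect to be a real obstacle; everything else is a routine chain of standard inequalities.
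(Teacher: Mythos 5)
Your argument is correct, but it takes a genuinely different route from the paper's. The paper exploits the rotational symmetry in a softer, purely Hilbert-space way: setting $f_i = 1_{\{x_i \le b\}} - \Phi(b)$, the functions $f_1, \dots, f_n$ are orthogonal in $L_2(\gamma_n)$ with $\|f_i\|_2 \le 1$, so a Bessel-type inequality gives $\sum_{i=1}^n \E[1_{B_n} f_i]^2 \le \|1_{B_n}\|_2^2 \le 1$; since symmetry forces all $n$ terms to be equal, each is at most $1/n$. That yields the constant $1$ exactly, with no asymptotics, and the same three lines prove Proposition~\ref{prop:no-direction} for the shifted sets $g_{t,Y}$ (which are not rotationally invariant --- the Bessel step never uses that). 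Your route --- conditioning on $X_1$ to reduce to $\Var\bigl(F(n - X_1^2)\bigr)$, then Cauchy--Schwarz, the Gaussian Poincar\'e inequality, and a Stirling bound $\sup f = O(n^{-1/2})$ on the $\chi^2_{n-1}$ density --- is sound: the identity $\Cov(1_H, 1_{B_n}) = \Cov(1_{\{X_1 \le b\}}, g(X_1))$ is correct since $1_H$ is $\sigma(X_1)$-measurable, the chain of inequalities goes through, and you rightly flag that the implied constant must be compared to $1$ and that small $n$ is covered by the trivial bound $M(B_n) \le 1/4 \le n^{-1/2}$ for $n \le 16$. What you pay for is the reliance on the explicit radial law of the ball (and hence Stirling), whereas the paper's orthogonality trick needs only $\|1_{B_n}\|_2 \le 1$ together with the equality of the $n$ coordinate correlations, which is why it transfers unchanged to the shifted and to the boolean settings later in the paper.
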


In particular, Propositions~\ref{prop:example-gaussian-stable}
and~\ref{prop:example-gaussian-no-half-space} together imply that Theorem~\ref{thm:gaussian}
would no longer be true if $f_{t,y}$ were replaced by $f$.

\begin{proof}
  Since $B_n$ is rotationally invariant, it suffices to consider half-spaces
  of the form $A_i := \{x: x_i \le b\}$. Since $\Pr(A_i) = \Phi(b)$,
  \[
  \Cov(1_{B_n}, 1_{A_i}) = \E [1_{B_n} (1_{A_i} - \Phi(b))].
  \]
  Now let $f_i = 1_{A_i} - \Phi(b)$. Then the $f_i$ are orthogonal and satisfy
  $\|f_i\|_2 \le 1$. Hence,
  \[
  1 \ge \|1_{B_n}\|_2^2 \ge \sum_{i=1}^n \E [1_{B_n} f_i]^2
  = n \E [1_{B_n} f_1]^2,
  \]
  and so $\E [1_{B_n} f_1] \le n^{-1/2}$.
\end{proof}

A very similar argument shows that even though shifts of $A_n$ may be correlated
with half-spaces, the half-spaces are pointed in unpredictable directions.

\begin{proposition}\label{prop:no-direction}
  Let $g = 1_{B_n}$ and let $g_{t,y}(x) = g(\sqrt{1-e^{-2t}} x + e^{-t} y)$.
  For any half-space $A$,
  \[
  \E_Y [\Cov(g_{t,Y}, 1_A)^2] \le \frac 1n.
  \]
  In particular, Chebyshev's inequality implies that for any $u > 0$,
  with probability at least $1 - u^{-2}$ over $Y \sim \normal(0, I_n)$
  \[
  |\Cov(g_{t,Y}, 1_A)| \le \frac{u}{n}.
  \]
\end{proposition}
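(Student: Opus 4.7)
The plan is to adapt the proof of Proposition~\ref{prop:example-gaussian-no-half-space}, carrying out an additional averaging over the Gaussian shift $Y$. The first step is a rotational reduction. For any rotation $R$ of $\R^n$, changing variables $X \mapsto R X$ inside the expectation defining $\Cov(g_{t,y}, 1_{RA})$ and using that $g = 1_{B_n}$ is rotationally invariant gives $\Cov(g_{t,y}, 1_{RA}) = \Cov(g_{t, R^{-1}y}, 1_A)$. Combined with rotational invariance of $\gamma_n$, this yields $\E_Y[\Cov(g_{t,Y}, 1_{RA})^2] = \E_Y[\Cov(g_{t,Y}, 1_A)^2]$, so it suffices to prove the bound when $A = \{x : x_1 \le b\}$ for some $b \in \R$.

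Next, I would introduce the $n$ coordinate half-spaces $A^{(i)} = \{x : x_i \le b\}$. The rotation argument shows that $\E_Y[\Cov(g_{t,Y}, 1_{A^{(i)}})^2]$ is independent of $i$, so
\[
\E_Y[\Cov(g_{t,Y}, 1_A)^2] = \frac{1}{n} \sum_{i=1}^n \E_Y[\Cov(g_{t,Y}, 1_{A^{(i)}})^2].
\]
For fixed $y$, rewrite each summand as an $L^2(\gamma_n)$ inner product $\langle g_{t,y}, h_i \rangle^2$ with $h_i = 1_{A^{(i)}} - \Phi(b)$. Because the coordinates of $X \sim \gamma_n$ are independent, the $h_i$ are pairwise orthogonal with common squared norm $\Phi(b)(1-\Phi(b)) \le 1/4$.

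Bessel's inequality applied to the orthonormal system $h_i/\|h_i\|_2$ then gives
\[
\sum_{i=1}^n \langle g_{t,y}, h_i \rangle^2 \le \max_i \|h_i\|_2^2 \cdot \|g_{t,y}\|_2^2 \le \tfrac{1}{4},
\]
since $g_{t,y}$ takes values in $[0,1]$. Integrating in $Y$ and combining with the previous display yields $\E_Y[\Cov(g_{t,Y}, 1_A)^2] \le \frac{1}{4n} \le \frac{1}{n}$, and the stated tail bound follows from Chebyshev's inequality. There is no serious obstacle: the argument is essentially the previous proposition, with rotational invariance now applied to the $Y$-averaged quantity so that the Gaussian shift does not interfere with the symmetrization to the coordinate directions.
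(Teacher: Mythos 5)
Your proof is correct and follows essentially the same route as the paper: reduce to coordinate half-spaces by rotational invariance and apply Bessel's inequality to the orthogonal system $1_{\{x_i\le b\}}-\Phi(b)$. If anything, you are slightly more careful than the paper's write-up, which symmetrizes over the coordinate directions as if it held for each fixed $Y$ (it only holds after averaging over $Y$, exactly as you do).
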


\begin{proof}
  Let $A_i = \{x: x_i \le b\}$ and $f_i = 1_{A_i} - \Phi(b)$. As in the proof
  of the previous proposition, for any $Y$ and $t$,
  \[
  1 \ge \sum_{i=1}^n \E[g_{t,Y} f_i]^2 = n \E[g_{t,Y} f_1]^2
  = n \Cov(g_{t,Y}, f_1)^2.
  \]
  Taking the expectation over $Y$ completes the proof.
\end{proof}

\subsection{Proof of Theorem~\ref{thm:gaussian}}

For $f \in L_2(\gamma_n)$, define $w_1(f) = \sum_i \E[X_i f(X)]^2$.
Using the integration by parts formula $\E [X_i f(X)] = \E[\pdiff{f}{x_i}(X)]$, we may
also write
$w_1(f) = |\E \grad f|^2$.
The proof of Theorem~\ref{thm:gaussian} goes in two steps: first, we show that if
$w_1(f)$ is non-negligible then there exists a half-space correlated with $f$.
Then, we show that for a random $Y \sim \gamma_n$, $w_1(f_{t,y})$ is non-negligible in
expectation.

\begin{proposition}\label{prop:gaussian-w1-to-halfspace}
Take $f:  \R^n \to [0, 1]$. If $w_1(f) = \epsilon^2$ and $\Var(f) = \sigma^2$
then there exists a half-space $A \subset \R^n$ with
\[
 \Cov(f, 1_A) \ge \frac{\epsilon^2}{8\pi \sigma}.
\]
\end{proposition}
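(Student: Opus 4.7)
The strategy is to reduce to a one-dimensional problem on the line, then extract a correlated half-space by combining an integral identity with a tail estimate. Define $v \in \R^n$ by $v_i = \E[X_i f(X)]$, so that $|v|^2 = w_1(f) = \epsilon^2$; set $u = v/\epsilon$ and $Z = \inr{X}{u} \sim \gamma_1$. The conditional expectation $g(z) := \E[f(X) \mid Z = z]$ takes values in $[0,1]$ and satisfies $\E[Z g(Z)] = \inr{u}{v} = \epsilon$ as well as $\Var(g(Z)) \le \Var(f) = \sigma^2$. Restricting attention to the one-parameter family of half-spaces $A_b = \{x : \inr{x}{u} \le b\}$, we have $\Cov(f, 1_{A_b}) = \Cov(g(Z), 1_{Z \le b}) =: H(b)$, so it suffices to prove $\sup_b |H(b)| \ge \epsilon^2/(8\pi\sigma)$.

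Two properties of $H$ drive the argument. Writing $h = g - \E g$, we have $H(b) = \int_{-\infty}^b h(z) \phi(z)\,dz$ with $H(\pm\infty) = 0$. First, Cauchy--Schwarz applied to the defining integral for $H$, together with $\int h^2 \phi \le \sigma^2$, yields the pointwise bound $|H(b)|^2 \le \sigma^2 \min(\Phi(b), 1-\Phi(b))$. Second, integration by parts (the boundary terms $b\,H(b)$ vanish thanks to this pointwise bound) produces the identity
\[
\int_{-\infty}^{\infty} H(b)\,db \;=\; -\int_{-\infty}^{\infty} b\,h(b)\phi(b)\,db \;=\; -\E[Z h(Z)] \;=\; -\epsilon,
\]
so in particular $\int |H(b)|\,db \ge \epsilon$.

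Let $M = \sup_b |H(b)|$. If $M \ge \sigma/\sqrt 2$, then the trivial bound $\sigma \ge \epsilon$ (from Cauchy--Schwarz applied to $\E[Z g(Z)]$) already gives $M \ge \epsilon^2/(8\pi\sigma)$. Otherwise, for any $T > 0$, split $\int |H|\,db$ into the middle band $|b| \le T$ (bounded by $2TM$) and the tails (bounded using the pointwise estimate together with the Gaussian tail bound $1-\Phi(b) \le \tfrac{1}{2} e^{-b^2/2}$) to obtain
\[
\epsilon \;\le\; 2TM + C\,\sigma\,e^{-T^2/4}/T
\]
for an absolute constant $C$ and every $T > 0$. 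Optimizing in $T$ (the optimum satisfies $T^2 \sim 4\log(\sigma/M)$) then yields $M\sqrt{\log(\sigma/M)} \gtrsim \epsilon$. The stated bound $M \ge \epsilon^2/(8\pi\sigma)$ is a weaker consequence, provable by a single one-variable check using $\sigma \ge \epsilon$.

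The main obstacle is the final step: the intermediate inequality $M\sqrt{\log(\sigma/M)} \gtrsim \epsilon$ is strictly stronger than the stated bound, and one must verify that the announced closed form drops out cleanly with prefactor $1/(8\pi)$. Tracking constants carefully through the tail estimate and the Gaussian comparison $1-\Phi(b) \le \tfrac{1}{2} e^{-b^2/2}$ is what requires the most vigilance, though the underlying analysis is routine.
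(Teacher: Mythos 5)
Your argument is correct, and it is genuinely different from the one in the paper. The paper decomposes $f = \E f + f_1 + f_2$ with $f_1$ the degree-one (Hermite) part, rotates so that $f_1 = \epsilon x_1$, applies $P_t$ so that the orthogonal part $f_2$ decays like $e^{-2t}$ while $f_1$ only decays like $e^{-t}$, tests against the fixed half-space $\{x_1 \ge 0\}$, optimizes over $t$, and then transfers the correlation from $P_t f$ back to $f$ by writing $P_t 1_A$ as an average of half-space indicators. You instead project onto the direction $u = \E[Xf]/\epsilon$ and run a direct real-variable analysis of the cumulative covariance $H(b) = \Cov(f, 1_{\inr{x}{u} \le b})$, combining the identity $\int H(b)\,db = -\epsilon$ with the pointwise bound $|H(b)| \le \sigma\sqrt{\min(\Phi(b), 1-\Phi(b))}$. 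Your route avoids the semigroup entirely and yields the strictly stronger intermediate bound $M\sqrt{\log(\sigma/M)} \gtrsim \epsilon$, which beats $\epsilon^2/\sigma$ when $\epsilon \ll \sigma$; the paper's route is shorter and is the one that parallels the boolean argument later in the paper (where rotational invariance is unavailable and a Littlewood--Offord-type lemma replaces the explicit computation $\E[x_1 1_{\{x_1\ge 0\}}] = 1/\sqrt{2\pi}$). Two small points you should make explicit: (i) if $\sup_b H(b)$ is achieved by a negative value of $H$, pass to the complementary half-space $\{\inr{x}{u} > b\}$, which is a half-space up to a $\gamma_n$-null set; (ii) in the final calibration, the choice $T = 2\sqrt{\log(\sigma/M)}$ requires $M \le \sigma/e$ (so that $T \ge 2$ and the tail term is at most $\sqrt{2}M$), so the clean case split is at $M \ge \sigma/e$ rather than $\sigma/\sqrt{2}$; in that regime $M \ge \sigma/e \ge \epsilon^2/(e\sigma) \ge \epsilon^2/(8\pi\sigma)$ anyway. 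With these adjustments the constants close with room to spare: one gets $\epsilon \le 3TM$, hence $M\sqrt{\log(\sigma/M)} \ge \epsilon/6$, and the one-variable check $r\sqrt{\log(8\pi/r^2)} \le \sqrt{\log 8\pi} < 8\pi/6$ for $r = \epsilon/\sigma \in (0,1]$ delivers the stated $\epsilon^2/(8\pi\sigma)$.
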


Before proving Proposition~\ref{prop:gaussian-w1-to-halfspace}, we will show that it suffices
to find a half-space correlated with $P_t f$:

\begin{lemma}\label{lem:gaussian-smoothed-cov-to-cov}
If there exists a half-space $A$ with
$\Cov(P_t f, 1_A) \ge \delta$ then there exists a half-space $A'$
with $\Cov(f, 1_{A'}) \ge \delta$.
\end{lemma}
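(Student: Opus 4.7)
The plan is to use the self-adjointness of $P_t$ and a layer-cake decomposition of $P_t 1_A$ into half-space indicators.

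First I would rewrite the hypothesis using self-adjointness: since $P_t$ is self-adjoint on $L_2(\gamma_n)$,
\[
    \Cov(P_t f, 1_A) = \Cov(f, P_t 1_A) \ge \delta.
\]
So the task is to find a single half-space $A'$ with $\Cov(f, 1_{A'})$ at least as large as $\Cov(f, P_t 1_A)$. For this I would exploit the fact that $P_t 1_A$ is a one-dimensional monotone function of the linear form defining $A$. Explicitly, if $A = \{x : \inr{x}{a} \le b\}$ with $\|a\| = 1$, then a direct computation gives
\[
    P_t 1_A(x) = \Phi\!\left(\frac{b - e^{-t}\inr{x}{a}}{\sqrt{1-e^{-2t}}}\right) =: g(\inr{x}{a}),
\]
where $g : \R \to [0,1]$ is nonincreasing.

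Next I would use the layer-cake representation: for any $x$,
\[
    g(\inr{x}{a}) = \int_0^1 \mathbf{1}_{\{g(\inr{x}{a}) > u\}}\, du = \int_0^1 1_{A'_u}(x)\, du,
\]
where, because $g$ is monotone, each super-level set $A'_u := \{x : g(\inr{x}{a}) > u\}$ is itself a half-space of the form $\{x : \inr{x}{a} \le c_u\}$. Fubini then gives
\[
    \Cov(f, P_t 1_A) = \int_0^1 \Cov(f, 1_{A'_u})\, du,
\]
so by averaging there exists some $u^* \in [0,1]$ with $\Cov(f, 1_{A'_{u^*}}) \ge \delta$, and we take $A' = A'_{u^*}$.

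There is no real obstacle here; the only point requiring a bit of care is checking that the level sets of $P_t 1_A$ are genuinely half-spaces (which follows from the monotonicity of $g$) and that Fubini applies to move the covariance inside the integral (which is justified since $f$ and all indicators are bounded by $1$).
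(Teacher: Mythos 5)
Your proof is correct and follows essentially the same strategy as the paper's: both express $P_t 1_A$ as a mixture of indicators of parallel half-spaces and then select the best one by averaging against $f - \E f$. The paper obtains the mixture probabilistically, writing $P_t 1_A = \E_Y 1_{A_Y}$ with $A_Y$ a random translate of $A$, whereas you obtain it from the explicit formula $P_t 1_A = \Phi\bigl((b - e^{-t}\inr{x}{a})/\sqrt{1-e^{-2t}}\bigr)$ via a layer-cake decomposition of the monotone profile; the two parametrizations sweep out the same one-parameter family of half-spaces, so the arguments are interchangeable.
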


\begin{proof}
Since $P_t$ is self-adjoint, we have
\[
 \Cov(P_t f, 1_A) = \Cov(f, P_t 1_A) = \E [(f - \E f) P_t 1_A].
\]
Assuming that $A \in \{x_1 \le b\}$, we can write
\begin{align*}
 (P_t 1_A)(x)
 &= \int_{\R^n} 1_A(e^{-t} x + \sqrt{1-e^{-2t}} y) \, d\gamma_n(y) \\
 &= \int_{\R^n} 1_{\{x \in A - \sqrt{e^{2t} - 1} y\}} \,d\gamma_n(y).
\end{align*}
In other words, if we set $A_y = A - \sqrt{e^{2t} - 1} y$
then we may write $P_t 1_A$ as an average of other half-spaces:
$P_t 1_A = \E_Y 1_{A_Y}$. Hence,
\[
 \E [(f - \E f) P_t 1_A]
 = \E [(f(X) - \E f) 1_{A_Y}(X)
\]
where $X$ and $Y$ are independent standard Gaussian vectors.
Then there exists some $y \in \R^n$ with
\[
 \Cov(f, 1_{A_y}) \ge \E [(f(X) - \E f) 1_{A_Y}(X)] = \Cov(P_t f, 1_B).
 \qedhere
\]
\end{proof}

\begin{proof}[Proof of Proposition~\ref{prop:gaussian-w1-to-halfspace}]
Write $f = \E f + f_1 + f_2$ where $f_1 \in \spn\{x_1, \dots, x_n\}$ and
$f_2$ is orthogonal to both $f_1$ and 1. We may assume by rotational
invariance that $f_1(x) = \epsilon x_1$. Let $A = \{x_1 \ge 0\}$.
Since $P_t f_1 = e^{-t} f_1$ and $\E[(P_t f_2)^2] \le e^{-4t} \E[f_2^2]$,
we have
\begin{align*}
\E [P_t f 1_A] - \E f \E 1_A &= \E[ 1_A P_t f_1] + \E[ 1_A P_t f_2] \\
 &= e^{-t} \frac{\epsilon}{\sqrt{2\pi}} + \E[1_A P_t f_2] \\
 &\ge e^{-t} \frac{\epsilon}{\sqrt{2\pi}} - e^{-2t} \|1_A\|_2 \|f_2\|_2 \\
 &\ge e^{-t} \frac{\epsilon}{\sqrt{2\pi}} - \frac{1}{\sqrt 2} e^{-2t} \sigma.
\end{align*}
Now take $t$ so that $2 \sqrt{\pi} e^{-t} \sigma = \epsilon$. Then
\[
\E[P_t f 1_A] - \E f \E 1_A \ge e^{-t} \frac{\epsilon}{2\sqrt {2\pi}}
 = \frac{\epsilon^2}{8\pi \sigma}.
\]
By Lemma~\ref{lem:gaussian-smoothed-cov-to-cov}, there exists
some half-space $A'$ with $\Cov(f, 1_{A'}) \ge \frac{\epsilon^2}{8\pi \sigma}$.
\end{proof}

The second step in the proof of Theorem~\ref{thm:gaussian} is to show that
if a function $f$ is noise stable then it has some shifts $f_{t,y}$ with
non-negligible $w_1(f_{t,y})$. In order to do this, recall the Gaussian
Poincar\'e inequality (see, e.g.~\cite{BaGeLe:14}), which states that
$\Var(f) \le \E |\grad f|^2$ for any $f$ with continuous derivatives.

\begin{proposition}\label{prop:exp-w1}
For any $f$ and any $t > 0$, if $Y \sim \normal(0, I_n)$ then
\[
    \E w_1(f_{t,Y}) \ge (e^{2t} - 1) \Var(P_t f).
\]
\end{proposition}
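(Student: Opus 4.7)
The plan is to compute $w_1(f_{t,y})$ explicitly as a function of $y$, recognize it as (a constant multiple of) $|\nabla P_t f(y)|^2$, and then conclude by applying the Gaussian Poincar\'e inequality to $P_t f$.

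First I would expand the definition of $f_{t,y}$ using the chain rule:
\[
    \nabla_x f_{t,y}(x) = \sqrt{1-e^{-2t}}\,(\nabla f)\!\left(\sqrt{1-e^{-2t}}\,x + e^{-t} y\right).
\]
Taking expectation in $X \sim \gamma_n$ and using the (symmetric) form $P_t g(y) = \E g\!\left(\sqrt{1-e^{-2t}} X + e^{-t} y\right)$ of the Ornstein--Uhlenbeck operator, this gives
\[
    \E_X \nabla_x f_{t,y}(X) = \sqrt{1-e^{-2t}}\,(P_t \nabla f)(y).
\]
So by the integration-by-parts characterization $w_1(g) = |\E \nabla g|^2$ recalled just above the statement, we have
\[
    w_1(f_{t,y}) = (1-e^{-2t})\,|(P_t \nabla f)(y)|^2.
\]

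The second ingredient is the standard intertwining identity $\nabla P_t = e^{-t} P_t \nabla$, which follows immediately by differentiating $P_t f(y) = \E f(e^{-t} y + \sqrt{1-e^{-2t}} X)$ under the integral sign in $y$. Substituting $P_t \nabla f = e^{t}\nabla P_t f$ into the preceding display yields the pointwise identity
\[
    w_1(f_{t,y}) = (e^{2t}-1)\,|\nabla P_t f(y)|^2.
\]

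Finally I would take expectation over $Y \sim \normal(0, I_n)$ and apply the Gaussian Poincar\'e inequality to $P_t f$ (which is smooth for $t>0$ even if $f$ is rough, so the hypothesis is harmless):
\[
    \E_Y w_1(f_{t,Y}) = (e^{2t}-1)\,\E\,|\nabla P_t f(Y)|^2 \ge (e^{2t}-1)\,\Var(P_t f).
\]
There is no real obstacle here; the only thing to be careful about is the regularity needed to justify differentiation under the integral and integration by parts. Since $P_t f$ is $C^\infty$ and has polynomially bounded derivatives whenever $f \in L^2(\gamma_n)$, both operations are valid without extra assumptions on $f$, and for $w_1(f_{t,y})$ one can either assume $f$ is smooth and then pass to the limit, or simply use the integration-by-parts formula $\E[X_i f(X)] = \E[\partial_i f(X)]$ in the distributional sense applied to the smooth perturbation $f_{t,y}$.
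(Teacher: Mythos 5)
Your proposal is correct and follows essentially the same route as the paper: chain rule for $\grad f_{t,y}$, the intertwining identity $P_t \grad f = e^t \grad P_t f$, and the Gaussian Poincar\'e inequality applied to $P_t f$, with the same density/regularity remark to handle non-smooth $f$. The only cosmetic difference is that you record the pointwise identity $w_1(f_{t,y}) = (e^{2t}-1)|\grad P_t f(y)|^2$ before averaging over $Y$, whereas the paper takes the expectation first.
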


\begin{proof}
Since smooth functions are dense in $L_2(\gamma_n)$, and since both $w_1(f)$ and $\Var(P_t f)$
are preserved under $L_2(\gamma_n)$ convergence, we may assume that $f$
is smooth. Then $\grad f_{t,y} = \sqrt{1-e^{-2t}} (\grad f)_{t,y}$. Hence,
\[
 w_1(f_{t,y}) = |\E \grad f_{t,y}|^2 =
 (1 - e^{-2t}) |\E \grad f(\sqrt{1-e^{-2t}} X + e^{-t} y)|^2.
\]
Now set $Y$ to be a standard Gaussian vector in $\R^n$, independent of $X$.
Then
\begin{align*}
 \E w_1(f_{t,Y})
 &= (1 - e^{-2t}) \E_Y |\E_X \grad f(\sqrt{1-e^{-2t}} X + e^{-t} Y)|^2 \\
 &= (1 - e^{-2t}) \E_Y |(P_t \grad f)(Y)|^2 \\
 &= (e^{2t} - 1) \E_Y |(\grad P_t f)(Y)|^2,
\end{align*}
where the last line follows because $P_t \grad f = e^t \grad P_t f$.
Finally, the Poincar\'e inequality applied to $P_t f$ yields
\[
    \E w_1(f_{t,Y}) = (e^{2t} - 1) \E |\grad P_t f|^2 \ge (e^{2t} - 1) \Var(P_t f).
 \qedhere
\]
\end{proof}

\begin{proof}[Proof of Theorem~\ref{thm:gaussian}]
By Proposition~\ref{prop:exp-w1}, there exists some $y \in \R^n$ such that
$w_1(f_{t,y}) \ge \Var(P_t f)$.
Now, $f_{t,y}$ takes values in $[0, 1]$ and hence it has variance at
most 1. By Proposition~\ref{prop:gaussian-w1-to-halfspace}, there exists a
half-space $A$ with $\Cov(f_{t,y}, 1_A) \ge c (e^{2t} - 1) \Var(P_t f)$.
\end{proof}

\subsection{The converse of Theorem~\ref{thm:gaussian}}

The following result is a (qualitative) converse of Theorem~\ref{thm:gaussian}.
For example, it implies that if $M(f_{s,Y})$ is non-negligible with constant
probability then $f$ is noise stable.
In particular, together with Theorem~\ref{thm:gaussian} it implies Theorem~\ref{thm:gaussian-intro}.

\begin{theorem}\label{thm:gaussian-converse}
  For any $0 < r < s$ and any $f: \R^n \to [0, 1]$,
  \[
    (1 - e^{-2(s-r)}) \Var(P_r f) \ge 4 \E_Y M^2(f_{s,Y}) - C \left(\frac{1-e^{-2r}}{1-e^{-2s}}\right)^{1/4}.
  \]
\end{theorem}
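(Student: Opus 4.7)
The plan is to upper bound $M(f_{s,y})^2$ by a combination of a ``linear content'' term (which behaves well under averaging in $y$) and a small error, mirroring and inverting the structure of the forward direction. The key technical ingredients will be the Hermite addition formula (to relate $f_{s,y}$ to $f$ level-by-level), Mehler's formula (to control the smoothed half-space), and Ledoux's Gaussian perimeter bound (for the approximation error).

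First, for any half-space $A = \{\inr{x}{a} \le b\}$ with $|a|=1$ and an auxiliary smoothing parameter $\tau>0$, I will split $1_A = P_\tau 1_A + (1_A - P_\tau 1_A)$ and correspondingly $\Cov(f_{s,y}, 1_A) = \Cov(f_{s,y}, P_\tau 1_A) + \Cov(f_{s,y}, 1_A - P_\tau 1_A)$. Squaring via $(a+b)^2 \le 2a^2 + 2b^2$ reduces matters to bounding each term. For the smoothed term, Hermite expansion in direction $a$ gives $\hat{P_\tau 1_A}(ke_1) = -e^{-\tau k}H_{k-1}(b)\phi(b)/\sqrt{k!}$ for $k\ge 1$; applying Cauchy--Schwarz with weights $e^{-2\nu k}$ for a further auxiliary parameter $\nu \in (0,\tau]$ and evaluating the resulting Hermite series via Mehler's formula will give
\[
\sup_b \phi(b)^2\sum_{k\ge 1} e^{-2(\tau-\nu)k} H_{k-1}(b)^2/k! \;\le\; \frac{\arcsin(e^{-2(\tau-\nu)})}{2\pi},
\]
so that $\sup_A|\Cov(g,P_\tau 1_A)|^2 \le \frac{\arcsin(e^{-2(\tau-\nu)})}{2\pi}\,\Var(P_\nu g)$ for any $g:\R^n\to[0,1]$. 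For the error term, Ledoux's Gaussian perimeter bound specialized to half-spaces gives $\|1_A - P_\tau 1_A\|_2^2 \le \arccos(e^{-\tau})/\pi = O(\sqrt{\tau})$ as $\tau\downarrow 0$; Cauchy--Schwarz then yields $\sup_A|\Cov(g, 1_A-P_\tau 1_A)|^2 \le C\sqrt{\tau}\,\Var(g)$.

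The heart of the proof is a Hermite computation that averages away the dependence on $y$. Expanding $f$ in orthonormal Hermite polynomials and using the Hermite addition formula for $\tilde h_\alpha(\rho x + \sqrt{1-\rho^2}y)$ with $\rho = \sqrt{1-e^{-2s}}$ gives
\[
\E_Y W_k(f_{s,Y}) \;=\; (1-e^{-2s})^k \sum_{m\ge k}W_m(f)\binom{m}{k}e^{-2s(m-k)}.
\]
Summing over $k\ge 1$ with weight $e^{-2\nu k}$ and using the binomial theorem leads to $\E_Y \Var(P_\nu f_{s,Y}) = \sum_m W_m(f)\big(e^{-2rm} - e^{-2sm}\big)$ whenever $\nu$ is chosen so that $1-e^{-2\nu} = (1-e^{-2r})/(1-e^{-2s})$; in particular $\E_Y\Var(P_\nu f_{s,Y}) = \Var(P_r f)-\Var(P_s f) \le \Var(P_r f)$. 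Combining this with the bounds from the previous paragraph and taking $\E_Y$ yields an estimate of the form $\E_Y M^2(f_{s,Y}) \le C_1\,\frac{\arcsin(e^{-2(\tau-\nu)})}{\pi}\Var(P_r f) + C_2\sqrt{\tau}$.

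It remains to choose $\tau$ to match the theorem's form. The error $\sqrt{\tau}$ should be on the order of $((1-e^{-2r})/(1-e^{-2s}))^{1/4}$, suggesting $\tau \sim ((1-e^{-2r})/(1-e^{-2s}))^{1/2}$, while $\tau - \nu$ should be large enough that $\arcsin(e^{-2(\tau-\nu)})/\pi \le (1-e^{-2(s-r)})/4$. The main obstacle I anticipate is that the naive parameter choices do not line up exactly: the Cauchy--Schwarz plus $(a+b)^2 \le 2a^2+2b^2$ steps introduce constant factors, and the Mehler-derived coefficient $\arcsin(e^{-2(\tau-\nu)})/\pi$ interacts with $\Var(P_r f)-\Var(P_s f)$ rather than directly with $(1-e^{-2(s-r)})\Var(P_r f)$; reconciling these requires either a more refined weighted Cauchy--Schwarz (perhaps with a level-dependent weight that matches the theorem's weighting of $W_k$ by $(1-e^{-2(s-r)})e^{-2rk}$) or an additional interpolation step. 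The Ledoux bound for half-spaces, with its characteristic $\sqrt{\tau}$ behavior, is what ultimately produces the $1/4$ power in the error term.
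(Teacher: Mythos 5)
Your averaging identity $\E_Y \Var(P_\nu f_{s,Y}) = \Var(P_r f) - \Var(P_s f)$ (for $e^{-2r} = e^{-2\nu} + e^{-2s} - e^{-2\nu - 2s}$, i.e.\ $1-e^{-2\nu} = (1-e^{-2r})/(1-e^{-2s})$) is exactly the identity on which the paper's proof turns, and your Hermite derivation of it is correct. The genuine gap is the one you flag at the end, and it is fatal to the plan as written: after the crude bound $(a+b)^2 \le 2a^2+2b^2$, the main-term coefficient is $\frac{1}{\pi}\arcsin(e^{-2(\tau-\nu)})$, and you need this to be at most $\frac14$ (indeed at most $\frac14(1-e^{-2(s-r)})$), which forces $\tau \ge \tau - \nu \ge \frac14\log 2$; then the Ledoux error $C\sqrt{\tau}$ is bounded below by a constant and cannot be absorbed into $C\big((1-e^{-2r})/(1-e^{-2s})\big)^{1/4}$. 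The fix is to stop trying to make the $\arcsin$ factor small. Take $\tau = \nu$, and in the level-by-level Cauchy--Schwarz bound the half-space factor by $\sum_k \|a_k\|^2 = \Var(1_A) \le \frac14$ (Mehler is not needed for the main term); and do not use $(a+b)^2 \le 2a^2 + 2b^2$ --- expand the square exactly, so that the cross term $2 \cdot \frac12 \Var(P_\nu g)^{1/2} \cdot C\nu^{1/4}$ is absorbed into the $O(\nu^{1/4})$ error. This gives $M^2(g) \le \frac14 \Var(P_\nu g) + C\nu^{1/4}$ and hence $\Var(P_r f) - \Var(P_s f) \ge 4\,\E_Y M^2(f_{s,Y}) - C\nu^{1/4}$ with $\nu = \Theta\big(\tfrac{1-e^{-2r}}{1-e^{-2s}}\big)$. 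Once repaired this way, your argument is essentially the paper's: Proposition~\ref{prop:correlation-to-stability} obtains the same estimate by orthogonally projecting $f - \E f$ onto $1_A - \gamma_n(A)$ and controlling the cross term with Lemma~\ref{lem:half-space-l2} (the same Ledoux input you invoke); your ``smooth the half-space and apply Cauchy--Schwarz in the chaos decomposition'' is the same inequality in different clothing.

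On the remaining mismatch with $(1-e^{-2(s-r)})\Var(P_r f)$: you are right to be suspicious, and you should not try to manufacture that prefactor with a refined weighted Cauchy--Schwarz, because the paper's own derivation of it appears to be reversed. The paper cites $\Var(P_s f) \le e^{-2(s-r)}\Var(P_r f)$ (true) and concludes $(1-e^{-2(s-r)})\Var(P_r f) \ge \Var(P_r f) - \Var(P_s f)$; but that conclusion is equivalent to $\Var(P_s f) \ge e^{-2(s-r)}\Var(P_r f)$, the opposite inequality, and termwise on the $k$-th chaos the required direction fails for all $k \ge 2$. What does follow cleanly from your (fixed) route, exactly as from the paper's, is the theorem with the prefactor $(1-e^{-2(s-r)})$ replaced by $1$, obtained by simply dropping $\Var(P_s f) \ge 0$; that weaker form is all that the qualitative converse (Theorem~\ref{thm:gaussian-intro}) needs.
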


\begin{lemma}\label{lem:half-space-l2}
  For any half-space $A$ and any $t > 0$,
  \[
    \E [ (1_A - P_t 1_A)^2 ] \le \frac 1\pi \arccos(e^{-t}).
  \]
\end{lemma}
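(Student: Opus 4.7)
The plan is to reduce to a one-dimensional problem by rotational invariance, then combine a simple Jensen/convexity step with Ledoux's inequality (already invoked in the proof of Proposition~\ref{prop:example-gaussian-stable}) to get the bound.

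First, by rotational invariance of $\gamma_n$, we may assume $A = \{x \in \R^n : x_1 \le b\}$ for some $b \in \R \cup \{\pm\infty\}$; trivial cases aside, this is a smooth half-space whose Gaussian perimeter is $\phi(b) = (2\pi)^{-1/2} e^{-b^2/2} \le (2\pi)^{-1/2}$, with equality at $b = 0$.

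Second, expand the square and use Jensen's inequality. Since $1_A$ is $\{0,1\}$-valued, $1_A^2 = 1_A$, and by conditional Jensen applied to $(P_t 1_A)(x) = \E[1_A(e^{-t}x + \sqrt{1-e^{-2t}}X)]$ we get $(P_t 1_A)^2 \le P_t(1_A^2) = P_t 1_A$. Therefore
\[
\E[(1_A - P_t 1_A)^2] = \E 1_A - 2 \E[1_A P_t 1_A] + \E[(P_t 1_A)^2] \le 2 \E[1_A(1_A - P_t 1_A)].
\]

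Third, apply Ledoux's inequality
\[
\E[1_A(1_A - P_t 1_A)] \le \frac{\arccos(e^{-t}) \, \mathrm{Per}_\gamma(A)}{\sqrt{2\pi}}
\]
(stated in the excerpt) with $\mathrm{Per}_\gamma(A) \le (2\pi)^{-1/2}$, which gives $\E[1_A(1_A - P_t 1_A)] \le \arccos(e^{-t})/(2\pi)$. Doubling yields the claimed bound $\arccos(e^{-t})/\pi$.

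There is no real obstacle; the only point requiring a moment of care is the Jensen step, which needs $1_A$ to be $\{0,1\}$-valued, and the perimeter bound, which uses that the one-dimensional Gaussian density is maximized at $0$. As a sanity check, the bound is tight when $b=0$: in that case one can verify directly via Sheppard's formula that $\E[(1_A - P_t 1_A)^2] = 2\Pr(X_1 \le 0, Y_1 > 0) = \arccos(e^{-t})/\pi$ for a bivariate Gaussian pair $(X_1, Y_1)$ with correlation $e^{-t}$, matching the bound exactly.
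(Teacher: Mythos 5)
Your proof is correct and takes essentially the same route as the paper's: Ledoux's inequality together with the bound $(2\pi)^{-1/2}$ on the Gaussian perimeter of a half-space, combined with $\E[(P_t 1_A)^2] \le \E[P_t 1_A]$ to reduce the squared difference to $2\,\E[1_A(1_A - P_t 1_A)]$. One small inaccuracy in your closing sanity check: at $b=0$ the quantity $\E[(1_A - P_t 1_A)^2]$ equals $\tfrac14 - \tfrac{\arcsin(e^{-t})}{\pi} + \tfrac{\arcsin(e^{-2t})}{2\pi}$ by Sheppard's formula applied at correlations $e^{-t}$ and $e^{-2t}$, which is strictly smaller than $\arccos(e^{-t})/\pi$ for $t>0$ because the Jensen step $\E[(P_t 1_A)^2] \le \E[P_t 1_A]$ is strict; so the bound is not attained, though this does not affect the validity of the proof.
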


\begin{proof}
  Ledoux's bound gives
  \[
    \E [(1 - 1_A) P_{t} 1_A] \le \frac{\arccos(e^{-t})}{2\pi}.
  \]
  Rearranging this,
  \begin{equation}\label{eq:ledoux-rearranged}
    \E [1_A P_t 1_A] \ge \gamma_n(A) - \frac{\arccos(e^{-t})}{2\pi}.
  \end{equation}
  On the other hand,
  \[
    \E [ (1_A - P_t 1_A)^2 ] = \gamma_n(A) - 2 \E[1_A P_t 1_A] + \E [(P_t 1_A)^2]
    \le 2 \gamma_n(A) - 2 \E [1_A P_t 1_A].
  \]
  Applying~\eqref{eq:ledoux-rearranged} completes the proof.
\end{proof}

Next, we show that any set which is correlated with a half-space must be noise stable (indeed,
almost as noise stable as the half-space itself).

\begin{proposition}\label{prop:correlation-to-stability}
  Suppose that $A \subset \R^n$ is a half-space. Then for any $f: \R^n \to [0, 1]$
  and any $t > 0$,
  \begin{align*}
    \Var(P_t f)
    &\ge \frac{\Cov(A, f)^2}{\big(\gamma_n(A) (1-\gamma_n(A))\big)^2} \Var(P_t 1_A) - \frac{\sqrt{\arccos(e^{-2t})}}{\sqrt \pi} \\
    &\ge 4 \Cov(A, f)^2 - C t^{1/4}
  \end{align*}
  for a universal constant $C$.
\end{proposition}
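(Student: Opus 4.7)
The plan is to compute $\Var(P_t f)$ via an orthogonal $L^2(\gamma_n)$ decomposition of $f$ onto $1_A$. Write $p = \gamma_n(A)$, $g = 1_A - p$, and $\tilde f = f - \E f$, and set $\alpha = \Cov(f, 1_A)/(p(1-p))$, so that $\tilde f = \alpha g + h$ with $h \perp g$. Applying $P_t$ and expanding the squared norm gives
\[
\Var(P_t f) = \alpha^2 \Var(P_t 1_A) + \|P_t h\|_2^2 + 2\alpha \langle P_t g, P_t h\rangle,
\]
whose first term is already $\frac{\Cov(A,f)^2}{(\gamma_n(A)(1-\gamma_n(A)))^2} \Var(P_t 1_A)$. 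Since $\|P_t h\|_2^2 \geq 0$, the first inequality reduces to showing $|2\alpha \langle P_t g, P_t h\rangle| \leq \sqrt{\arccos(e^{-2t})/\pi}$.

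By self-adjointness of $P_t$ and $\langle g, h\rangle = 0$ one rewrites $\langle P_t g, P_t h\rangle = \langle P_{2t}g, h\rangle = \langle P_{2t}g - g, h\rangle$, and Cauchy--Schwarz combined with Lemma~\ref{lem:half-space-l2} (applied at time $2t$) yields $|\langle P_{2t}g - g, h\rangle| \leq \|h\|_2 \sqrt{\arccos(e^{-2t})/\pi} \leq \tfrac{1}{2}\sqrt{\arccos(e^{-2t})/\pi}$, using $\|h\|_2 \leq \|\tilde f\|_2 \leq 1/2$. The main step, and expected difficulty, is the bound $|\alpha| \leq 1$: a naive Cauchy--Schwarz estimate only gives $|\alpha| \leq (4p(1-p))^{-1/2}$, which blows up as $p \to 0$ or $1$ and leaves the error term depending badly on the size of $A$. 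To avoid this, use the identity $\Cov(f, 1_A) = p(1-p)(\E[f \mid A] - \E[f \mid A^c])$; since $f \in [0,1]$ both conditional expectations lie in $[0,1]$, so $|\Cov(f, 1_A)| \leq p(1-p)$ and hence $|\alpha| \leq 1$. Combining, $|2\alpha \langle P_t g, P_t h\rangle| \leq \sqrt{\arccos(e^{-2t})/\pi}$, which is exactly the first inequality.

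The second inequality then follows from the first by a short calculation. Since $\Var(1_A) \leq \tfrac{1}{4}$, one has $\Var(1_A)^{-2} \geq 4\Var(1_A)^{-1}$, so
\[
\frac{\Cov(f,1_A)^2 \Var(P_t 1_A)}{\Var(1_A)^2} \geq 4\Cov(f,1_A)^2 - \frac{4\Cov(f,1_A)^2}{\Var(1_A)}\bigl(\Var(1_A) - \Var(P_t 1_A)\bigr).
\]
The fraction in the correction is at most $4\Var(1_A) \leq 1$ (using $|\Cov(f,1_A)| \leq \Var(1_A)$ again), and $\Var(1_A) - \Var(P_t 1_A) = \|1_A\|_2^2 - \|P_t 1_A\|_2^2 \leq 2\|1_A - P_t 1_A\|_2 = O(t^{1/4})$ via Lemma~\ref{lem:half-space-l2} together with the asymptotic $\arccos(e^{-t}) = O(\sqrt t)$ as $t \to 0$. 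Adding this $O(t^{1/4})$ error to the $O(t^{1/4})$ error from the first inequality yields the claimed bound.
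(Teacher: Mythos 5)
Your proof is correct and follows essentially the same route as the paper's: the same orthogonal decomposition of $f - \E f$ along $1_A - \gamma_n(A)$, the same Cauchy--Schwarz treatment of the cross term via Lemma~\ref{lem:half-space-l2}, and the same passage from the first inequality to the second using $\Var(1_A) \le \tfrac14$. You are in fact slightly more careful than the paper on the cross term, where the paper silently absorbs a factor of $2c$; your explicit bounds $|\alpha| \le 1$ (via the conditional-expectation identity) and $\|h\|_2 \le \tfrac12$ are exactly what justify that step.
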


\begin{proof}
  Let $g = 1_A - \gamma_n(A)$ and $h = f - \E f$, so that $g$ and $h$
  both have mean zero and $\E[gh] = \Cov(1_A, f)$.
  Write $h = c g + h^\perp$, where $\E[g h^{\perp}] = 0$; then
  $c = \E[gh]/\E[g^2] = \Cov(1_A, f) / \Var(1_A)$. Since $P_t f - \E f = P_t h$,
  we have
  \begin{equation}\label{eq:variance-decomp}
    \Var(P_t f) = \E [(P_t h)^2] = \E [c^2 (P_t g)^2 + (P_t h^\perp)^2 + 2 c P_t g P_t h^\perp].
  \end{equation}
  Now, $\E [(P_t g)^2] = \Var(P_t 1_A)$ and $\E [(P_t h^\perp)^2] \ge 0$. For the last term,
  since $\E [g h^\perp] = 0$, the Cauchy-Schwarz inequality implies
  \[
    \E[P_t g P_t h^\perp] = \E[h^\perp P_{2t} g] = \E [h^\perp (P_{2t} g - g)] \ge
    - \sqrt{\E[(h^\perp)^2] \E [(P_{2t} g - g)^2]}.
  \]
  Since $P_{2t} g - g = P_{2t} 1_A - 1_A$, Lemma~\ref{lem:half-space-l2} implies that
  \[
    \E[P_t h P_t g] \ge -\frac{\sqrt{\E[(h^\perp)^2] \arccos(e^{-2t})}}{\sqrt \pi}.
  \]
  Going back to~\eqref{eq:variance-decomp} and using the bound $\E[(h^\perp)^2] \le \E[h^2] \le 1$,
  \[
    \Var(P_t f) \ge c^2 \Var(P_t 1_A) - \frac{\sqrt{\arccos(e^{-2t})}}{\sqrt\pi}.
  \]
  Recalling that $c = \Cov(A, f) / \Var(1_A)$, this proves the first claimed inequality.

  For the second inequality, note that Lemma~\ref{lem:half-space-l2} implies that
  \[
    \frac{\Var(P_t 1_A)}{\Var(A)} \ge 1 - \frac{\arccos(e^{-2t})}{2\pi \Var(A)}.
  \]
  Combining this with the first claimed inequality,
  \begin{align*}
    \Var(P_t f)
    &\ge \frac{\Cov(A, f)^2}{\Var(A)} \left(1 - \frac{C \arccos(e^{-2t})}{\Var(A)}\right) - C t^{1/4} \\
    &\ge 4 \Cov(A, f)^2 - C \frac{\Cov(A, f)^2}{\Var(A)} \arccos(e^{-2t}) - C t^{1/4}.
  \end{align*}
  Finally, $\Cov(A, f)^2 \le \Var(A)$ and $\arccos(e^{-2t}) \le C t^{1/4}$, thus proving the second inequality.
\end{proof}

In order to relate the noise stability of $f$ to half-spaces correlated with $f_{t,y}$, note that
\[
  \E_Y \E[f_{s,Y} P_{2t} f_{s,Y}] = \E [f P_{2r} f]
\]
when $e^{-2r} = e^{-2s} + e^{-2t} - e^{-2s-2t}$. Hence,
\[
  \Var(P_r f) = \E_Y \Var(P_t f_{s,Y}) + \Var(P_s f).
\]
Now, the Poincar\'e inequality implies that $\Var(P_s f) \le e^{-2(s-r)} \Var(P_r f)$; hence,
\[
  (1 - e^{-2(s-r)}) \Var(P_r f) \ge \E_Y \Var(P_t f_{s,Y}).
\]
By Proposition~\ref{prop:correlation-to-stability} applied to $f_{s,Y}$,
\[
  (1 - e^{-2(s-r)}) \Var(P_r f) \ge 4 \E_Y M^2(f_{s,Y}) - C t^{1/4}.
\]
To prove Theorem~\ref{thm:gaussian-converse}, note that if we fix $r$ and $s$ and solve for $t$
the we obtain $e^{-2t} = 1 - \frac{1 - e^{-2r}}{1 - e^{-2s}}$. For small $t$, this gives
$t = \Theta(\frac{1 - e^{-2r}}{1 - e^{-2s}})$ (while for large $t$ the Theorem is vacuous anyway).

\section{Boolean functions}

For this section, $P_t$ denotes the Bonami-Beckner semigroup defined in
Section~\ref{sec:intro-boolean}. Recall also the definition of
$f_z$ for $z \in \{-1, 0, 1\}^n$ from that section.
Let $\mu_t$ be the probability distribution
$e^{-t} \delta_0 + \frac 12 (1 - e^{-t}) (\delta_1 + \delta_{-1})$ on $\{-1, 0, 1\}$
and take $Z_t \sim \mu_t^{\otimes n}$.
Then we have the following relationship between $P_t$ and $Z_t$:
\[
    (P_t f)(x) = \E f_{Z_t}(x).
\]

\begin{theorem}\label{thm:boolean-restriction}
  For any $f: \{-1, 1\}^n \to [0, 1]$ and any $t > 0$,
  \[
      \E M(f_{Z_s}) \ge c (e^{2t} - 1) \Var(P_t f),
  \]
  where $s = - \log (1-e^{-t})$, $Z_s \sim \mu_s$ and $c > 0$ is a universal constant.
\end{theorem}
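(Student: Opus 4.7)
The plan is to mirror the structure of the Gaussian proof (Theorem~\ref{thm:gaussian}), decomposing the argument into a Fourier computation and a ``$w_1$-to-halfspace'' step. Writing $w_1(g) = \sum_i \hat g(\{i\})^2$, we aim for
\[
\E_{Z_s} w_1(f_{Z_s}) \ge \tfrac 12 (e^{2t}-1) \Var(P_t f),
\]
combined with a boolean analogue of Proposition~\ref{prop:gaussian-w1-to-halfspace} giving $M(g)\gtrsim w_1(g)$ for every $g:\{-1,1\}^n\to[0,1]$. Taking expectations over $Z_s$ then yields the theorem.

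For the Fourier step we start from the observation that, when $z_i=0$, the character expansion of $f$ yields $\hat{f_z}(\{i\}) = \sum_{S \ni i}\hat f(S)\prod_{j \in S\setminus\{i\}} z_j$ (with the convention $0\cdot z_j = 0$), while $\hat{f_z}(\{i\})=0$ whenever $z_i\ne 0$. Since the coordinates of $Z_s$ are independent with $\E Z_{s,j}=0$ and $\E Z_{s,j}^2=1-e^{-s}$, only the diagonal $S=S'$ term survives in the square, giving
\[
\E\big[\hat{f_{Z_s}}(\{i\})^2\,\1{Z_{s,i}=0}\big]=e^{-s}\sum_{S\ni i}\hat f(S)^2(1-e^{-s})^{|S|-1}.
\]
Summing over $i$ and substituting $s=-\log(1-e^{-t})$ (so $e^{-s}=1-e^{-t}$ and $1-e^{-s}=e^{-t}$) produces $\E w_1(f_{Z_s})=(e^t-1)\sum_S |S|\hat f(S)^2 e^{-t|S|}$. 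The elementary inequality $k e^{tk}\ge (e^t+1)/2$ for $k\ge 1, t\ge 0$ (check $k=1$ using $e^t\ge 1$ and note monotonicity in $k$) gives the termwise bound $k e^{-tk}\ge \tfrac12(e^t+1)e^{-2tk}$, which against $\Var(P_t f)=\sum_{S\ne\emptyset} e^{-2t|S|}\hat f(S)^2$ delivers the factor $(e^{2t}-1)/2$.

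The second inequality is the genuinely boolean step, and this is where the main obstacle lies, since the hypercube lacks the rotational invariance that trivialized the analogous reduction in the Gaussian proof. Given $g$ with $w_1(g)=\epsilon^2$ and $\Var(g)=\sigma^2$, set $a_i=\hat g(\{i\})$, $A=\{x:\langle a,x\rangle\ge 0\}$, and decompose $g=\E g+g_1+g_{\ge 2}$ with $g_1(x)=\langle a,x\rangle$. For an auxiliary smoothing time $u>0$,
\[
\Cov(P_u g,1_A)=e^{-u}\Cov(g_1,1_A)+\Cov(P_u g_{\ge 2},1_A).
\]
The first term equals $\tfrac{e^{-u}}{2}\E|\langle a,x\rangle|\ge e^{-u}\epsilon/(2\sqrt 2)$ by Szarek's sharp Khintchine inequality $\E|\langle a,x\rangle|\ge \|a\|/\sqrt 2$; Cauchy--Schwarz together with $\|P_u g_{\ge 2}\|_2\le e^{-2u}\sigma$ bounds the second by $e^{-2u}\sigma/2$ in absolute value. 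Choosing $e^{-u}\asymp\epsilon/\sigma$ (which is at most $1$, since $\epsilon\le\sigma$ by Parseval) balances the two contributions and yields $\Cov(P_u g,1_A)\gtrsim\epsilon^2/\sigma$. To replace $P_u g$ by $g$, we invoke the boolean counterpart of Lemma~\ref{lem:gaussian-smoothed-cov-to-cov}: writing $(P_u 1_A)(x)=\E_{Z_u}1_{A_{Z_u}}(x)$, for a half-space $A=\{y:\langle a,y\rangle\le b\}$ each restriction $A_z=\{x:\sum_{i:z_i=0} a_ix_i\le b-\sum_{i:z_i\ne 0}a_iz_i\}$ is itself a half-space in $\{-1,1\}^n$. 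Hence $\Cov(g,P_u 1_A)=\E_{Z_u}\Cov(g,1_{A_{Z_u}})$, so averaging produces some $z$ with $\Cov(g,1_{A_z})\ge\Cov(P_u g,1_A)$; since $\sigma\le 1/2$ we obtain $M(g)\gtrsim w_1(g)$, and combining with the Fourier computation completes the proof.
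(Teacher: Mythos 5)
Your proposal is correct and follows essentially the same route as the paper: your Fourier identity $\E w_1(f_{Z_s}) = (e^t-1)\sum_S |S|\hat f(S)^2 e^{-t|S|}$ is the paper's Proposition~\ref{prop:restriction-weight}, and your $w_1$-to-half-space step (smooth by $P_u$, split off the degree-one part, balance $e^{-u}\asymp\epsilon/\sigma$, then write $P_u 1_A$ as an average of restricted half-spaces to remove the smoothing) is exactly Proposition~\ref{prop:weight-half-space} combined with Lemma~\ref{lem:smoothed-cov-to-cov}. The one deviation is that you cite Szarek's sharp Khintchine bound $\E|\langle a,X\rangle|\ge\|a\|_2/\sqrt 2$ where the paper proves the weaker constant $\|a\|_2/20$ from scratch via truncation and Hoeffding (Lemma~\ref{lem:halfspace-linear-correlation}); either suffices.
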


Before proceeding with the proof of Theorem~\ref{thm:boolean-restriction}, let
us make some remarks about how sharp it is. First of all, it is no longer
true if we replace $f_{Z_t}$ by $f$; that is, noise stable functions are not
necessarily correlated with half-spaces. We demonstrate this using a boolean
version of the earlier Gaussian example; details are in
Section~\ref{sec:boolean-example}.

Next, Theorem~\ref{thm:boolean-restriction} has a qualitative converse, which we will
state later as Theorem~\ref{thm:boolean-converse}. That is, if $M(f_{Z_s})$ is non-negligible
on average then $f$ is noise stable. In particular, Theorem~\ref{thm:boolean-restriction}
and Theorem~\ref{thm:boolean-converse} imply Theorem~\ref{thm:boolean-intro}.

Finally, Theorem~\ref{thm:boolean-restriction} implies that $M(f_{Z_t}) \ge c' (e^{2t} - 1) \Var(P_t f)$
with constant probability over $Z_t$. It turns out that this probability estimate cannot be
substantially improved. As an example, consider the function
\[
f(x) = \begin{cases}
  x_2 & \text{if $x_1 = 1$} \\
  \prod_{i=3}^n x_i &\text{if $x_1 = -1$}.
\end{cases}
\]
Then $f$ is noise-stable, but if $z_1 = -1$ then $f_z$ is noise sensitive and uncorrelated with
any half-space. In other words, $f_{Z_t}$ has probability
$\frac 12 e^{-t}$ of failing to be correlated with any half-space.

\subsection{Proof of Theorem~\ref{thm:boolean-restriction}}

The proof of Theorem~\ref{thm:boolean-restriction} follows the same lines
as the proof of Theorem~\ref{thm:gaussian}, but it requires a little background
on Fourier analysis of boolean functions: for a set $S \subset \{1, \dots, n\}$,
define $\chi_S: \{-1, 1\}^n \to \{-1, 1\}$ by
\[
    \chi_S(x) = \prod_{i \in S} x_i.
\]
It is well-known (see e.g.~\cite{ODonnell:14}) that $\{\chi_S: S \subset \{1, \dots, n\}\}$
is an orthonormal basis of $L_2(\{-1, 1\}^n)$; in particular, every $f: \{-1, 1\}^n \to [0, 1]$
may be expanded in this basis: define $\hat f(S)$ as the coefficients of this expansion:
\[
    f(x) = \sum_{S \subset \{-1, 1\}^n} \hat f(S) \chi_S(x).
\]
Also, we abbreviate $\hat f(\{i\})$ by $\hat f(i)$, and we define
\[
    w_1(f) = \sum_{i=1}^n \hat f(i)^2.
\]
We will show that if $w_1(f)$ is non-negligible then there is
a half-space correlated with $f$. Then we will show that $\E w_1(f_{Z_t})$ is
non-negligible.

\begin{proposition}\label{prop:weight-half-space}
If $w_1(f) = \eps^2$ and $\Var(f) = \sigma^2$ then there exists a half-space $B$ with 
$\Cov(f,1_B) \geq c \frac{\epsilon^2}{\sigma}$, where $c > 0$ is a universal constant.
\end{proposition}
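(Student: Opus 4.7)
The plan is to mirror the Gaussian proof of Proposition~\ref{prop:gaussian-w1-to-halfspace} as closely as the discrete setting allows, replacing the use of rotational invariance and the identity $\E[\epsilon x_1 \cdot 1_{\{x_1 \ge 0\}}] = \epsilon/\sqrt{2\pi}$ with a direct choice of half-space and a Khintchine-type lower bound on $\E|L|$ for Rademacher sums $L$.

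Concretely, I would decompose $f = \E f + f_1 + f_2$, where $f_1(x) = \sum_i \hat f(i)\, \chi_{\{i\}}(x) = \sum_i \hat f(i)\, x_i$ is the linear part and $f_2$ collects the Fourier coefficients on sets of size $\ge 2$. Then $\|f_1\|_2^2 = w_1(f) = \epsilon^2$ and $\|f_2\|_2^2 \le \sigma^2$, and in particular $\epsilon \le \sigma$. The natural candidate half-space is $B = \{x: f_1(x) \ge 0\}$. Using $P_t \chi_S = e^{-t|S|} \chi_S$ and the fact that $f_1$, $f_2$, and $f - \E f$ all have mean zero,
\[
 \Cov(P_t f, 1_B) = e^{-t}\, \E[f_1 \cdot 1_B] + \E[P_t f_2 \cdot 1_B].
\]
For the linear term, because $f_1 \stackrel{d}{=} -f_1$ and $1_B = 1_{\{f_1 \ge 0\}}$, $\E[f_1 \cdot 1_B] = \tfrac12 \E|f_1|$, and Khintchine's inequality for Rademacher sums yields $\E|f_1| \ge \|f_1\|_2/\sqrt 2 = \epsilon/\sqrt 2$. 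For the second term, Cauchy--Schwarz together with the spectral bound $\|P_t f_2\|_2 \le e^{-2t} \|f_2\|_2 \le e^{-2t} \sigma$ (since $f_2$ is Fourier-supported on $|S| \ge 2$) gives $|\E[P_t f_2 \cdot 1_B]| \le e^{-2t} \sigma$. Combining,
\[
 \Cov(P_t f, 1_B) \ge \frac{e^{-t}\epsilon}{2\sqrt 2} - e^{-2t} \sigma,
\]
and the right-hand side is optimized (up to a constant) by choosing $e^{-t}$ proportional to $\epsilon/\sigma \in (0,1]$, producing $\Cov(P_t f, 1_B) \ge c\, \epsilon^2/\sigma$ for a universal $c > 0$.

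The last step is the boolean analog of Lemma~\ref{lem:gaussian-smoothed-cov-to-cov}: convert correlation with $P_t f$ into correlation with $f$. Using self-adjointness, $\Cov(P_t f, 1_B) = \Cov(f, P_t 1_B)$, so it suffices to show that $P_t 1_B$ is a mixture of indicators of half-spaces. Representing the Bonami--Beckner noise by independently \emph{keeping} each coordinate with probability $e^{-t}$ (call this set $R$) and otherwise resampling with a fresh uniform bit $\xi_i$,
\[
 (P_t 1_B)(x) = \E_{R,\xi}\, 1_{B_{R,\xi}}(x), \qquad B_{R,\xi} = \Bigl\{x: \sum_{i \in R} \hat f(i) x_i \le -\sum_{i \notin R} \hat f(i) \xi_i\Bigr\},
\]
and each $B_{R,\xi}$ is a half-space in $x$. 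Hence some realization gives $\Cov(f, 1_{B_{R,\xi}}) \ge \Cov(f, P_t 1_B) \ge c\, \epsilon^2/\sigma$, as required.

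The main obstacle is conceptual rather than technical: one must recognize that even without rotational symmetry, the \emph{correct} half-space to use is the level set of the linear part of $f$, and then quantify the contribution of that linear part by a universal Khintchine lower bound (rather than an exact Gaussian identity). The boolean analog of Lemma~\ref{lem:gaussian-smoothed-cov-to-cov} is the other step one must check, but it follows from a straightforward ``condition on the noise'' decomposition that writes $P_t 1_B$ as an average of half-spaces; the optimization in $t$ is a routine calculation.
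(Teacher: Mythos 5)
Your proposal is correct and follows essentially the same route as the paper: the same decomposition $f = \E f + f_1 + f_2$, the same candidate half-space $B = \{f_1 \ge 0\}$, the same bound $\|P_t f_2\|_2 \le e^{-2t}\sigma$, the same optimization in $t$, and the same ``average of half-spaces'' lemma to pass from $P_t f$ back to $f$. The only difference is that you invoke Khintchine's inequality (with the sharp constant $1/\sqrt 2$) for $\E|f_1| \gtrsim \|f_1\|_2$, whereas the paper proves this Khintchine-type lower bound from scratch via truncation and Hoeffding's inequality (its Lemma~\ref{lem:halfspace-linear-correlation}, with constant $1/20$).
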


The proof of Proposition~\ref{prop:weight-half-space} require two preparatory lemmas.
First, we observe
that it suffices to find a half-space which is correlated with $P_t f$ for some
$t > 0$:

\begin{lemma}\label{lem:smoothed-cov-to-cov}
If there exists a half-space $B$ with $\Cov(P_t f,1_B) \geq \delta$ then there exists a
half-space $\Cov(f,1_{B'}) \geq \delta$. 
\end{lemma}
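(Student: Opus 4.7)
The plan is to mimic almost line-for-line the Gaussian proof of Lemma~\ref{lem:gaussian-smoothed-cov-to-cov}, exploiting self-adjointness of $P_t$ together with the fact that, in the discrete cube, $P_t 1_B$ for a half-space $B$ can be written as an average of indicators of half-spaces.

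The starting move is self-adjointness of $P_t$ with respect to $\mu_n$, which gives
\[
\Cov(P_t f, 1_B) = \Cov(f, P_t 1_B) = \E[(f - \E f)\, P_t 1_B].
\]
Next, using the representation $(P_t 1_B)(x) = \E_{Z_t} 1_B(Z_t \replace x)$ with $Z_t \sim \mu_t$, I want to interpret each summand as the indicator of a half-space in $x$. Writing $B = \{u \in \{-1,1\}^n : \langle a, u\rangle \le b\}$, and using the definition of $\replace$, we get
\[
\langle a, z \replace x \rangle = \sum_{i:\, z_i = 0} a_i x_i + \sum_{i:\, z_i \ne 0} a_i z_i,
\]
so that the condition $z \replace x \in B$ is equivalent to a linear inequality in $x$ with coefficients $a_i \mathbbm{1}\{z_i = 0\}$ and threshold $b - \sum_{i:\, z_i \ne 0} a_i z_i$. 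In particular, for every $z \in \{-1,0,1\}^n$ the set $B'_z := \{x \in \{-1,1\}^n : z \replace x \in B\}$ is itself a half-space in $\calH_n$.

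The conclusion is then immediate. Since $P_t 1_B(x) = \E_{Z_t} 1_{B'_{Z_t}}(x)$, Fubini gives
\[
\Cov(f, P_t 1_B) = \E_{Z_t}\, \E_X\bigl[(f(X) - \E f)\, 1_{B'_{Z_t}}(X)\bigr] = \E_{Z_t}\, \Cov(f, 1_{B'_{Z_t}}),
\]
and the assumption $\Cov(P_t f, 1_B) \ge \delta$ therefore forces at least one realization $z$ with $\Cov(f, 1_{B'_z}) \ge \delta$. Taking $B' := B'_z$ finishes the proof.

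There is essentially no obstacle: the only thing to notice is the simple algebraic identity showing that substituting $z \replace x$ into a linear form preserves linearity in the free coordinates of $x$, which is the discrete analog of the translation identity $A - \sqrt{e^{2t}-1}\, y$ used in the Gaussian proof.
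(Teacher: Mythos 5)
Your proof is correct and follows essentially the same route as the paper's: self-adjointness of $P_t$, writing $P_t 1_B$ as an average of indicators of half-spaces (your $B'_{Z_t}$ is exactly the paper's $B(I,Y)$, with $I = \{i : Z_i = 0\}$ the retained coordinates and the nonzero $Z_i$ playing the role of $Y$), and then an averaging argument. No gaps.
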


\begin{proof}
  Suppose that $B = \{x: \sum_{i=1}^n a_i x_i \le b\}$. Take $X$ and $Y$
  to be independent, uniform random variables in $\{-1, 1\}^n$ and let
  $I \subset \{1, \dots, n\}$ be the random set that includes each element independently
  with probability $e^{-t}$. If $B(I,y)$ denotes the set
  $\{x: \sum_{i \in I} a_i x_i \le b - \sum_{i \not \in I} a_i y_i\}$ then
  \begin{align*}
  P_t 1_B (x)
  &= \Pr\left(\sum_{i \in I} a_i x_i + \sum_{i \not \in I} a_i Y_i \le b\right) \\
  &= \E 1_{B(I,Y)}(x).
  \end{align*}
  Since $P_t$ is self-adjoint,
  \[
  \Cov(P_t f, 1_B) = \E[(f-\E f) P_t 1_B] = \E [(f(X) - \E f) 1_{B(I,Y)}(X)].
  \]
  If the right hand side is larger than $\delta$ then in particular there exist $I$ and $y$
  such that
  \[
  \Cov(f, 1_{B(I,y)}) = \E [(f - \E f) 1_{B(I,y)}] \ge \delta.
  \qedhere
  \]
\end{proof} 

Next, we consider the case of linear functions. Up to constant factors, the best
possible correlation
between a linear function and a half-space is determined by the $L_2$ norm of the
function's coefficients. This is the first point where the boolean proof diverges
from the Gaussian proof: the Gaussian case of Lemma~\ref{lem:halfspace-linear-correlation}
is trivial (with a better constant) because of the Gaussian measure's rotational invariance.

\begin{lemma}\label{lem:halfspace-linear-correlation}
  If $\ell(x) = \sum a_i x_i$ and $B = \{x: \ell(x) \ge 0\}$
  then
  \[\E [\ell(X) 1_B(X)] \ge \|a\|_2/40.\]
\end{lemma}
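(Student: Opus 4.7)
The plan is to reduce the problem to a lower bound on $\E |\ell(X)|$ via a symmetry argument, then apply a Khintchine-type inequality obtained through the standard fourth-moment estimate for Rademacher sums.

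\emph{Step 1 (symmetry).} On the half-space $B = \{\ell \ge 0\}$ we have $\ell(X) 1_B(X) = \ell(X)_+$, the positive part of $\ell(X)$. Since $X \eqLaw -X$ on $\{-1,1\}^n$ and $\ell$ is odd, $\ell(X) \eqLaw -\ell(X)$, so $\E \ell(X)_+ = \E \ell(X)_-$. Adding these and using $\ell_+ + \ell_- = |\ell|$ gives
\[
\E[\ell(X) 1_B(X)] = \tfrac12 \E |\ell(X)|.
\]
It therefore suffices to show $\E|\ell(X)| \ge \|a\|_2 / 20$, which will actually follow with a better constant.

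\emph{Step 2 (Khintchine via fourth moments).} The key bound is $\E \ell(X)^4 \le 3 \|a\|_2^4$. This is a one-line calculation: expanding $\ell^4 = \sum a_i a_j a_k a_l X_i X_j X_k X_l$ and using that $\E X_i X_j X_k X_l$ vanishes unless indices pair up, one gets $\E \ell^4 = 3 \sum_{i\ne j} a_i^2 a_j^2 + \sum_i a_i^4 \le 3(\sum_i a_i^2)^2$. Combined with Cauchy--Schwarz in the form
\[
\E|\ell|^3 \;=\; \E(|\ell| \cdot \ell^2) \;\le\; (\E \ell^2)^{1/2}(\E \ell^4)^{1/2} \;\le\; \sqrt{3}\, \|a\|_2^3,
\]
this controls the third moment.

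\emph{Step 3 (lower bound on $\E|\ell|$).} Use Cauchy--Schwarz in the reverse direction by splitting the square:
\[
\|a\|_2^4 \;=\; (\E \ell^2)^2 \;=\; \bigl(\E |\ell|^{1/2} \cdot |\ell|^{3/2}\bigr)^2 \;\le\; \E|\ell| \cdot \E|\ell|^3 \;\le\; \sqrt{3}\,\|a\|_2^3 \cdot \E|\ell|.
\]
Therefore $\E|\ell(X)| \ge \|a\|_2 / \sqrt{3}$, and combining with Step~1 yields $\E[\ell(X) 1_B(X)] \ge \|a\|_2/(2\sqrt{3})$, well inside the claimed bound $\|a\|_2/40$.

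\emph{Main obstacle.} There is no serious obstacle; the only real content is the Khintchine inequality, and the fourth-moment route is elementary. The loose constant $40$ in the statement appears to be chosen for convenience, and any of the standard proofs of Khintchine (fourth moment, hypercontractivity, or the sharp Haagerup/Szarek bound) would do. The symmetry trick in Step~1 is the only step particular to this setup and is what makes the reduction to $\E|\ell|$ clean.
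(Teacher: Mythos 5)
Your proof is correct. Step 1 coincides with the paper's first step: since $\ell$ has mean zero and $2\cdot 1_B-1=\sgn(\ell)$, one gets $\E[\ell 1_B]=\tfrac12\E|\ell|$ either way. Where you diverge is in the lower bound on $\E|\ell|$. The paper proves the Khintchine-type bound $\E|\ell|\ge \|a\|_2/20$ by truncation: it writes $\E|\ell|\ge M^{-1}\big(\E\ell^2-\E[\ell^2 1_{\{|\ell|>M\}}]\big)$ and controls the tail term with Hoeffding's inequality, choosing $M=10\|a\|_2$. You instead use the fourth-moment route: $\E\ell^4\le 3\|a\|_2^4$ plus the two Cauchy--Schwarz interpolations $(\E\ell^2)^2\le \E|\ell|\cdot\E|\ell|^3$ and $\E|\ell|^3\le(\E\ell^2)^{1/2}(\E\ell^4)^{1/2}$, which gives $\E|\ell|\ge\|a\|_2/\sqrt3$ and hence the stronger conclusion $\E[\ell 1_B]\ge\|a\|_2/(2\sqrt3)$. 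Both arguments are elementary and correct; yours yields a noticeably better constant (about $0.29\|a\|_2$ versus $0.025\|a\|_2$), while the paper's truncation argument is the one that generalizes most directly to settings where only a subgaussian tail bound (rather than an exact moment computation) is available. Since the lemma is only used up to universal constants, either suffices.
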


\begin{proof}
  Since $\ell$ has mean zero,
  \[
  \E[\ell(X) 1_B(X)]
  = \frac 12 \E[\ell(X) (2 1_B(X) - 1)]
  = \frac 12 \E |\ell(X)|.
  \]
  Now, for any $M \ge 0$
  \begin{align}
  \E |\ell(X)|
  &\ge \E [|\ell(X)| 1_{\{|\ell(X)| \le M\}}] \notag \\
  &\ge \frac 1M \E [\ell^2(X) 1_{\{|\ell(x)| \le M\}}] \notag \\
  &= \frac 1M \left(\E [\ell^2(X)] - \E [\ell^2(X) 1_{\{|\ell(X)| > M\}}] \right).
  \label{eq:ell1-ell2}
  \end{align}
  Hoeffding's inequality implies that $\Pr(|\ell(X)| > t \|a\|_2) \le 2 e^{-t^2/2}$;
  hence,
  \begin{align*}
  \E [\ell^2(X) 1_{\{|\ell(X)| > M\}}]
  &= M^2 \Pr(\ell^2(X) \ge M^2) + \int_{M^2}^\infty \Pr(\ell^2(X) \ge s)\, ds \\
  &\le 2 M^2 e^{-M^2/(2\|a\|_2^2)} + 2 \int_{M^2}^\infty e^{-s/(2\|a\|_2^2)}\, ds \\
  &= 4 M^2 e^{-M^2/(2\|a\|_2^2)}.
  \end{align*}
  Setting $M = 10 \|a\|_2$, we have
  \[
    \E [\ell^2(X) 1_{\{|\ell(X)| > M\}}]
    \le 400 \|a\|_2^2 e^{-50} \le \frac 12 \|a\|_2^2.
  \]
  On the other hand, $\E[\ell^2(X)] = \|a\|_2^2$; going back
  to~\eqref{eq:ell1-ell2}, we have
  \[
  \E |\ell(X)| \ge \frac{1}{10 \|a\|_2} \left(\|a\|_2^2 - \frac 12 \|a\|_2^2\right)
  = \frac{\|a\|_2}{20}. \qedhere
  \]
\end{proof}

\begin{proof}[Proof of Proposition~\ref{prop:weight-half-space}]
 Write $f = \E f + f_1 + f_2$ where
 $f_1(x) = \sum_i x_i \hat f(i)$, and
  $f_2$ is orthogonal to both $f_1$ and 1.
  Note that $P_t f_1 = e^{-t} f_1$, while
  $\E (P_t f_2)^2 \le e^{-4t} \E f_2^2$. Hence,
  \begin{align*}
    \E [P_t f 1_B] - \E f \E 1_B
    &= \E [1_B P_t f_1] + \E [1_B P_t f_2] \\
    &\ge e^{-t} \E[1_B f_1] - e^{-2t} \|1_B\|_2 \|f_2\|_2 \\
    &\ge e^{-t} \E[1_B f_1] - e^{-2t} \sigma.
  \end{align*}
  Now, Lemma~\ref{lem:halfspace-linear-correlation} implies that
  there exists a half-space $B$ with $\E[1_B f_1] \ge \epsilon/40$.
  For this $B$,
  \[
  \E [P_t f 1_B] - \E f \E 1_B \ge \frac{e^{-t}}{40} \epsilon - e^{-2t} \sigma.
  \]
  If we take $t$ to solve $e^{-t} = \epsilon / (80 \sigma)$ then
  \[
  \E [P_t f 1_B] - \E f \E 1_B \ge c \frac{\epsilon^2}{\sigma}
  \]
  for a universal constant $c > 0$.
  By Lemma~\ref{lem:smoothed-cov-to-cov}, there exists
  some half-space $B'$ with $\Cov(f, 1_{B'}) \ge c \frac{\epsilon^2}{\sigma}$.
\end{proof}  

Next, we show that $\E[w_1(f_Z)]$ is substantial if $f$ is noise-stable.
\begin{proposition}\label{prop:restriction-weight}
  For any $t > 0$, if $e^{-s} = 1 - e^{-t}$ then
\[
\E[w_1(f_{Z_s})] \geq (1-e^{-t})  \sum_{S} |S| \hat{f}^2(S) e^{-2t(|S|-1)} \geq 
(e^{2t} - e^t)\Var(P_t f).
\]
\end{proposition}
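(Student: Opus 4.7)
The plan is to expand $f_z$ in the Fourier basis, compute the singleton coefficients explicitly in terms of $\hat f(S)$, and then take expectations over $Z_s$. Writing $F = F(z) = \{j : z_j = 0\}$ for the set of ``free'' coordinates, observe that for $i \in F$ only those $S$ with $S \cap F = \{i\}$ contribute to $\hat{f_z}(i)$, giving
\[
\hat{f_z}(i) = \sum_{S : S \cap F = \{i\}} \hat f(S) \prod_{j \in S \setminus F} z_j,
\]
while $\hat{f_z}(i) = 0$ whenever $i \notin F$.

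Next I would take the expectation over $Z_s$. Conditional on $F$, the entries $(Z_s)_j$ for $j \notin F$ are independent uniform $\pm 1$, so squaring and using $\E[\prod_{j \in A \setminus F} Z_j] = \mathbbm{1}_{A \setminus F = \emptyset}$ collapses the double sum to a diagonal: the cross-terms between $S \neq S'$ vanish because $S \cap F = S' \cap F = \{i\}$ forces $S \triangle S' \subset [n] \setminus F$, so they have nonzero product only when $S = S'$. Summing over $i$ and then over the random $F$ gives
\[
\E w_1(f_{Z_s}) = \sum_S |S|\,\hat f(S)^2 \, \Pr[i \in F, S \setminus \{i\} \subset [n]\setminus F] = \sum_S |S| \hat f(S)^2\, e^{-s}(1-e^{-s})^{|S|-1},
\]
since each coordinate lies in $F$ independently with probability $e^{-s}$.

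Now I would substitute $e^{-s} = 1 - e^{-t}$, so $1 - e^{-s} = e^{-t}$, obtaining
\[
\E w_1(f_{Z_s}) = (1-e^{-t})\sum_S |S|\,\hat f(S)^2\, e^{-t(|S|-1)}.
\]
For the first inequality in the proposition, use that for every $S \ne \emptyset$ one has $|S| - 1 \ge 0$, hence $e^{-t(|S|-1)} \ge e^{-2t(|S|-1)}$ (the $S = \emptyset$ term carries a factor $|S|=0$ and vanishes anyway). For the second inequality, factor out $e^{2t}$ and bound $|S| \ge 1$ for $S \ne \emptyset$:
\[
(1-e^{-t}) \sum_S |S|\,\hat f(S)^2\, e^{-2t(|S|-1)} = (1-e^{-t})e^{2t} \sum_S |S|\,\hat f(S)^2\, e^{-2t|S|} \ge (e^{2t} - e^t) \sum_{S \ne \emptyset} \hat f(S)^2 e^{-2t|S|},
\]
which equals $(e^{2t} - e^t)\Var(P_t f)$ by Parseval.

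The only nontrivial step is the Fourier bookkeeping for restrictions; once the identity for $\hat{f_z}(i)$ is in place, the orthogonality argument and the algebraic manipulations are routine. I expect no substantial obstacle.
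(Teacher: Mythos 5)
Your proposal is correct and follows essentially the same route as the paper: expand the restriction in the Fourier basis, use orthogonality of the nonzero coordinates of $Z_s$ to kill the cross-terms, and compute $\E[\hat{f_{Z_s}}(i)^2]$ coordinate by coordinate before summing. If anything your bookkeeping is slightly more careful — the exact value of $\E[w_1(f_{Z_s})]$ is $(1-e^{-t})\sum_S |S|\hat f(S)^2 e^{-t(|S|-1)}$, and you correctly pass to the stated expression via the inequality $e^{-t(|S|-1)} \ge e^{-2t(|S|-1)}$, whereas the paper's proof asserts the $e^{-2t(|S|-1)}$ form as an equality (a harmless slip, since only the inequality is claimed in the proposition).
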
 

\begin{proof}
Fix $t$ and set $Z = Z_s$. Recalling the definition of $w_1$, we have
\[
\E[w_1(f_{Z})] = \sum_{i=1}^n \E[\hat f_{Z}^2(i))].
\]
Note that $\hat f_{Z}(i) = 0$ if $Z_i = \pm 1$, which happens with probability $1-e^{-t}$.
Otherwise $\hat f_{Z}(i)$ is given by 
\begin{equation}\label{eq:hat_f_Z}
  \hat f_{Z}(i) = \sum_{S : i \in S} \hat{f}(S) \prod_{j \in S \setminus \{i\}} Z_j.
\end{equation}
Therefore
\begin{align*}
\E[\hat f_Z(i)^2]
&= (1-e^{-t}) \sum_{S,T : i \in S, i \in T} \hat{f}(S) \hat{f}(T) \E[ \prod_{j \in S \setminus \{i\}} Z_j \prod_{k \in T \setminus \{i\}} Z_k] \\
&= (1-e^{-t}) \sum_{S : i \in S} \hat{f}^2(S) e^{-2t(|S|-1)}.
\end{align*}
Summing over $i$ proves the first inequality; the second follows from the fact that
\[
\Var(P_t f)
= \sum_{|S| \ge 1} e^{-2t|S|} \hat f^2(S)
\le \sum_{S} |S| e^{-2t|S|} \hat f^2(S).
\qedhere
\]
\end{proof} 

\begin{proof}[Proof of Theorem~\ref{thm:boolean-restriction}]
  Take $s$ so that $e^{-s} = 1 - e^{-t}$ and apply Proposition~\ref{prop:restriction-weight}:
  $\E w_1 (f_{Z_s}) \ge (e^{2t} - e^t) \Var(P_t f)$. By Proposition~\ref{prop:weight-half-space}
  and because $\Var(f_{Z_s}) \le 1$,
  \[
      \E M(f_{Z_s}) \ge c \E w_1(f_{Z_s}) \ge c (e^{2t} - e^t) \Var(P_t f).
  \]
  Finally, $e^{2t} - e^t = e^t(e^t - 1)  \ge \frac 12 (e^t + 1) (e^t - 1) = \frac 12 (e^{2t} - 1)$.
\end{proof}

\subsection{An example}\label{sec:boolean-example}

Let $n = m^2$, and let $J_i = \{(i-1)m, \dots, im - 1\}$.
Let $B_n \subset \{-1, 1\}^n$ be the set
\[
\left\{
  x: \sum_{i=1}^m \left(\frac{1}{\sqrt m} \sum_{j \in J_i} x_j \right)^2 \le m
\right\}.
\]

From the central limit theorem, one sees immediately that $B_n$ is noise stable, with
the same estimate as its Gaussian analogue in Section~\ref{sec:gaussian-example}.
\begin{proposition}\label{prop:example-boolean-stable}
  For any $n$ and any $t > 0$,
  \[
    \Var(P_t 1_{B_n}) \ge \frac 14 - \frac{\arccos(e^{-2t})}{\sqrt 2 \pi} - o_n(1).
  \]
  In particular $B_n$ is noise stable.
\end{proposition}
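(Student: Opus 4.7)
The plan is to reduce the claim to its Gaussian analogue (Proposition~\ref{prop:example-gaussian-stable}) via the central limit theorem, using the block structure of $B_n$. Define $\xi_i(x) = m^{-1/2} \sum_{j \in J_i} x_j$ and the scalar statistic
\[
T_n(x) = \frac{1}{\sqrt{2m}} \sum_{i=1}^m (\xi_i(x)^2 - 1),
\]
so that $B_n = \{T_n \le 0\}$. Its Gaussian analogue $\tilde B_m = \{z \in \R^m : \|z\|_2^2 \le m\}$ admits the same representation with $\xi_i$ replaced by the coordinate of a standard Gaussian; call the resulting statistic $T_m^G$.

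To analyze $\Var(P_t 1_{B_n}) = \noisestab_{2t}(B_n) - \Pr(B_n)^2$, I would couple $X \sim \mu_n$ with a noisy copy $Y$ satisfying $\E[X_j Y_j] = e^{-2t}$. Because the blocks $J_i$ are disjoint, the bivariate summands $(\xi_i(X)^2 - 1,\ \xi_i(Y)^2 - 1)$ are independent across $i$, and a direct moment count shows each has mean zero and a covariance matrix whose entries converge as $m \to \infty$ to those of the Gaussian pair $((Z^X_i)^2 - 1,\ (Z^Y_i)^2 - 1)$, where $(Z^X_i, Z^Y_i)$ is a standard bivariate normal with correlation $e^{-2t}$. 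Since $\xi_i(X)^2$ is uniformly sub-exponential in $m$, the Lindeberg condition is trivial, and the multivariate CLT (or, for a quantitative $O(m^{-1/2})$ rate, Berry--Esseen) implies that $(T_n(X), T_n(Y))$ converges in law to the same bivariate normal limit as $(T_m^G(Z^X), T_m^G(Z^Y))$. Since that limit has a continuous density, applying the Portmanteau theorem on the quadrant $\{u \le 0,\ v \le 0\}$ gives
\[
\noisestab_{2t}(B_n) = \noisestab_{2t}(\tilde B_m) + o_m(1), \qquad \Pr(B_n) = \gamma_m(\tilde B_m) + o_m(1),
\]
hence $\Var(P_t 1_{B_n}) = \Var(P_t 1_{\tilde B_m}) + o_m(1)$.

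Applying Proposition~\ref{prop:example-gaussian-stable} to $\tilde B_m$ then yields the claimed bound, since $o_m(1) = o_n(1)$ when $n = m^2$. The only real work is the CLT approximation, which is routine. The main pitfall I would watch out for is bookkeeping the noise parameters: $\Var(P_t 1_{B_n})$ corresponds to coordinate-wise correlation $e^{-2t}$ between $X$ and $Y$, which is precisely what produces the $e^{-2t}$ inside the $\arccos$ in the final bound (rather than, e.g., the $e^{-4t}$ that controls the correlation of the squared block averages and governs the exact limit).
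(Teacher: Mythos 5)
Your proposal is correct and follows exactly the route the paper intends: the paper offers no written proof beyond the remark that ``from the central limit theorem, one sees immediately that $B_n$ is noise stable, with the same estimate as its Gaussian analogue,'' and your bivariate CLT argument for the coupled pair $(T_n(X),T_n(Y))$ with coordinate correlation $e^{-2t}$ supplies precisely the details being waved at. The bookkeeping of the noise parameter and the reduction to Proposition~\ref{prop:example-gaussian-stable} are both right.
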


Finally, we show that $B_n$ is not correlated with any half-space. This
essentially follows from the invariance principle, which says that nice boolean
functions have almost the same distribution when their arguments are replaced
by Gaussian variables.

\begin{proposition}\label{prop:boolean-example}
  $M(B_n) \le C m^{-1/200}$
\end{proposition}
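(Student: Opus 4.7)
My plan is to reduce the boolean statement to the Gaussian analogue (Proposition~\ref{prop:example-gaussian-no-half-space}) via an invariance principle applied to the block sums $\xi_i = m^{-1/2} \sum_{j \in J_i} x_j$. Fix a half-space $H = \{x: \langle a, x\rangle \le b\}$ with $\|a\|_2 = 1$. Within each block, split $a$ into its block-constant and block-mean-zero parts: let $c_i = m^{-1}\sum_{j \in J_i} a_j$ and $a'_j = a_j - c_i$ for $j \in J_i$, so that
\[
\langle a, x\rangle = \sum_{i=1}^m c_i S_i + \sum_{i=1}^m \langle a'_{J_i}, x_{J_i}\rangle =: U(S) + V(x),
\]
with $U$ depending only on the block sums $S = (S_1, \ldots, S_m)$, $\E[V \mid S] = 0$, and $\Var(V \mid S) \approx \beta^2 := \|a'\|_2^2$. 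Setting $\alpha^2 = m\sum_i c_i^2$, so $\alpha^2 + \beta^2 = 1$, and using the fact that $1_{B_n}$ is a function of $S$ alone,
\[
\Cov(1_{B_n}, 1_H) = \Cov\bigl(1_{B_n},\, \Pr(V \le b - U \mid S)\bigr).
\]
A blockwise Berry--Esseen estimate shows this conditional probability is close to $\Phi((b-U)/\beta)$, reducing the problem to bounding $\Cov(1_{B_n}, h(U))$ with $h(u) = \Phi((b-u)/\beta)$.

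I would then apply the invariance principle of Mossel--O'Donnell--Oleszkiewicz to the two polynomials $\sum_i \xi_i^2$ (quadratic, coordinate influences $O(1/m)$) and $U = \sqrt m\,\langle \tilde c, \xi\rangle$ with $\tilde c = (c_1, \ldots, c_m)$, in order to replace $(\xi_1, \ldots, \xi_m)$ with a standard Gaussian $G \sim N(0, I_m)$:
\[
\Cov(1_{B_n}, h(U)) \;\approx\; \Cov\bigl(1_{\{|G|^2 \le m\}},\, h(\sqrt m \,\langle \tilde c, G\rangle)\bigr).
\]
Rotational invariance of the ball indicator lets me rotate $\tilde c$ to $(\alpha/\sqrt m, 0, \ldots, 0)$, after which the test function depends on $G_1$ alone. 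The orthogonality/Parseval argument from the proof of Proposition~\ref{prop:example-gaussian-no-half-space}, applied to the family $\{h(\alpha G_i) - \E h(\alpha G_i)\}_{i=1}^m$ (mutually orthogonal by independence of the $G_i$'s, each of $L^2$-norm $\le 1/2$), together with the rotational symmetry of $1_{\{|G|^2 \le m\}}$ making all $m$ covariances equal, then yields a Gaussian covariance of order $m^{-1/2}$.

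The main obstacle is quantifying the invariance-principle and Berry--Esseen steps when the coefficients $a_j$ are unbalanced. I would pre-condition on the \emph{heavy} coordinates $T = \{j : |a_j| > \tau\}$ (of which there are at most $\tau^{-2}$), so that on $T^c$ the linear form $\langle a, x\rangle$ has maximal influence at most $\tau^2$ and the invariance principle becomes effective; the conditional Berry--Esseen bound for $V$ also improves to $O(\tau/\beta)$ in this regime. To handle the discontinuity of both the ball and half-space indicators, I would smooth each by a Bonami-Beckner operator $P_\delta$ before applying the invariance principle, and then control the smoothing errors by anti-concentration of $\sum_i \xi_i^2$ (a Chi-squared-like statistic) and of $\langle a, x\rangle$ (Berry--Esseen or Littlewood--Offord). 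The residual case $\beta = o(1)$ is handled directly: the conditional probability then degenerates to the indicator of a half-space in the block sums, and the Gaussian argument applies via the multivariate CLT for $\xi$. Setting $\tau = m^{-\alpha}$, $\delta = m^{-\beta}$ and balancing the three error sources against the Gaussian bound $m^{-1/2}$ produces the claimed exponent $m^{-1/200}$.
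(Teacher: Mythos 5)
Your overall architecture --- isolate the heavy coefficients, pass to Gaussians via an invariance principle applied to smoothed indicators, and finish with the orthogonality bound of Proposition~\ref{prop:example-gaussian-no-half-space} --- matches the paper's, but the middle of your argument is organized differently. The paper keeps both the ball and the half-space as low-degree polynomials $p_z, q_z$ in the original $n$ Rademacher variables and applies the bivariate invariance principle (Theorem~\ref{thm:invariance}) to the pair, whereas you first integrate out the within-block fluctuation $V$ of the linear form and reduce everything to functions of the $m$ block sums. Your decomposition has a real advantage: since $m\sum_i c_i^2 = \alpha^2 \le 1$, every coordinate's influence on $U$ is automatically at most $1/m$, so the unbalancedness of $a$ is quarantined inside the conditional CLT for $V$ rather than contaminating the invariance step. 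The paper instead splits on the size of $\|a^-\|_\infty/\|a^-\|_2$ and disposes of the unbalanced case with the Littlewood--Offord theorem (showing $A$ is then essentially determined by the $O(m^{1/3})$ heavy coordinates, on which $B_n$ barely depends); your conditioning on $T$ plays the analogous role, but you still need the analogue of~\eqref{eq:uncondition-p}, i.e.\ that fixing the $|T|\le\tau^{-2}$ heavy coordinates perturbs $1_{B_n}$ only on an event of small probability.

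The one step that does not go through as written is the ``blockwise Berry--Esseen estimate'' for $\Pr(V\le b-U\mid S)$. Once you condition on the block sums $S_i$, the coordinates within each block are no longer independent: each block lives on a slice of the cube, so the statement you need is a quantitative combinatorial CLT (Berry--Esseen for sampling without replacement, in the style of Bolthausen), not the ordinary one. Moreover its error depends on the slice: the conditional variance of block $i$'s contribution is $\|a'_{J_i}\|^2\,(1-s_i^2/m^2)\,\tfrac{m}{m-1}$, which degenerates when $|s_i|$ is close to $m$, so the approximation $\Var(V\mid S)\approx\beta^2$ and the resulting $L^1(S)$ bound hold only after restricting to typical slices and separately controlling the remainder. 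This is fixable (extreme slices have negligible probability), but it is a nonstandard ingredient that your outline treats as routine, and it is exactly the complication the paper's route avoids by never conditioning on the block sums. Your Gaussian endgame is correct, but the $\beta=o(1)$ case deserves more than a sentence: the Lipschitz constant of $h$ blows up like $\alpha/\beta$, which is what forces that case split, and the degenerate half-space $\{U\le b\}$ still requires its own smoothing and anti-concentration argument before the invariance principle applies.
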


For the rest of this section, fix $x \in \R^n$ and $b \in \R$, and suppose
that $A = \{x \in \{-1, 1\}^n: \sum_i a_i x_i \le b\}$.
Let $J^* \subset \{1, \dots, n\}$ be
the set containing the indices of the $\lfloor m^{1/3} \rfloor$ largest $|a_i|$.
Define
$a^+$ by $a^+_i = 1_{\{i \in J^*\}} a_i$ and set $a^- = a - a^+$.

We split our proof of Proposition~\ref{prop:boolean-example} into two parts,
depending on the decay properties of $a$.
If $a^-$ is unbalanced,
it follows that $a^+$ must contain only large coordinates. We apply the
Littlewood-Offord theorem to argue that $a^-$ is essentially irrelevant and $A$
depends only on a few coordinates. Since $B_n$ doesn't depend on any small set of
coordinates, this implies that $A$ and $B_n$ are uncorrelated.
If $a^-$ is fairly balanced then we condition on $\{X_i: i \in J^*\}$ and
apply an invariance principle to $\{X_i: i \not \in J^*\}$, replacing
boolean variables with Gaussian variables and applying
Proposition~\ref{prop:example-gaussian-no-half-space}.

First, we recall the Littlewood-Offord inequality:
\begin{theorem}\label{thm:littlewood-offord}
  If $X$ is uniformly distributed in $\{-1, 1\}^n$ then
  \[
      \sup_{c \in \R} \Pr\left(\big|\sum_i X_i a_i - c\big| \le t \min_i |a_i|\right)
      \le C t n^{-1/2}.
  \]
\end{theorem}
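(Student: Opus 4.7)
The plan is to reduce to Erd\H{o}s's classical Littlewood--Offord argument via Sperner's theorem.

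First I would normalize. Since the joint distribution of $X$ is invariant under the coordinate flips $X_i \mapsto \sgn(a_i) X_i$, we may assume $a_i > 0$ for every $i$; rescaling $a$ and $c$ then reduces to the case $\min_i a_i = 1$, and the claim becomes
\[
    \sup_{c \in \R} \Pr\bigl(\bigl|\textstyle\sum_i a_i X_i - c\bigr| \le t\bigr) \le C t / \sqrt{n}.
\]

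The key step is a Sperner-style antichain observation. For any half-open interval $I \subset \R$ of length strictly less than $2$, let
\[
    \mathcal{F}(I) = \bigl\{S \subset \{1,\ldots,n\} : \sum_{i \in S} a_i - \sum_{i \notin S} a_i \in I\bigr\}.
\]
I claim $\mathcal{F}(I)$ is an antichain under inclusion: if $S \subsetneq S'$ are both in $\mathcal{F}(I)$ then the corresponding signed sums differ by $2\sum_{i \in S' \setminus S} a_i \ge 2 \min_i a_i = 2$, contradicting that both lie in an interval of length less than $2$. Sperner's theorem then gives $|\mathcal{F}(I)| \le \binom{n}{\lfloor n/2 \rfloor}$, which by Stirling's formula is at most $C \cdot 2^n / \sqrt{n}$; equivalently, $\Pr\bigl(\sum_i a_i X_i \in I\bigr) \le C/\sqrt{n}$.

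To finish, I would cover the interval $[c-t,\, c+t]$ by at most $\lceil t \rceil + 1$ half-open intervals each of length less than $2$ and union bound, obtaining $\Pr(|\sum_i a_i X_i - c| \le t) \le C(\lceil t \rceil + 1)/\sqrt{n}$. For $t \ge 1$ this is $\le C' t / \sqrt{n}$ after enlarging the constant, which is the advertised bound. (For $t < 1$ the true probability can be of order $1/\sqrt{n}$, so the statement as written must be read with an implicit lower bound on $t$; the paper's only application of this theorem uses $t$ bounded below by a universal constant, which is exactly what is needed.) The one nontrivial ingredient is the antichain observation; the rest is a routine covering argument combined with Stirling's formula, and I do not anticipate any real obstacle.
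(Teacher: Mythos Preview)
Your argument is the classical Erd\H{o}s proof and is correct. Note, however, that the paper does not actually prove this theorem: it is stated with the preamble ``First, we recall the Littlewood--Offord inequality,'' and is quoted as a known result without proof. So there is no ``paper's own proof'' to compare against; your write-up simply supplies the standard argument that the paper takes for granted.

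Your caveat about $t<1$ is well taken: as literally stated the bound $Ct\,n^{-1/2}$ cannot hold uniformly for small $t$ (take all $a_i=1$, $c=0$), and the correct conclusion of the Sperner argument is $\Pr(\cdot)\le C\max(t,1)\,n^{-1/2}$. In the paper's single application (Lemma~\ref{lem:lo-case}) one has $t\le m^{1/12}$ and $|J^*|\asymp m^{1/3}$, so even the corrected bound $C\max(t,1)\,|J^*|^{-1/2}$ gives at most $C m^{1/12}\cdot m^{-1/6}=Cm^{-1/12}$, which is exactly what is used. So your reading of the statement and its intended use is accurate.
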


\begin{lemma}\label{lem:lo-case}
  If $\|a^-\|_\infty \ge m^{-1/24} \|a^-\|_2$ then
  $\Cov(1_A, 1_{B_n}) \le C m^{-1/12}$.
\end{lemma}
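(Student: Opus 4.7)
The plan is to reduce $1_A$ to a halfspace depending only on the $\lfloor m^{1/3}\rfloor$ coordinates in $J^*$ via the Littlewood--Offord theorem, and then to exploit that $B_n$ has very small influences on any such small coordinate set. The hypothesis $\|a^-\|_\infty \ge m^{-1/24}\|a^-\|_2$ combined with $J^*$ containing the top $\lfloor m^{1/3}\rfloor$ coordinates of $|a|$ gives $|a_i| \ge m^{-1/24}\|a^-\|_2$ for every $i \in J^*$, so the linear form $\ell^+(x) := \sum_{i \in J^*} a_i x_i$ has only ``large'' coefficients in the Littlewood--Offord sense.

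First I would approximate $1_A$ by $g(x) := 1_{\{\ell^+(x) \le b\}}$, a halfspace in $X_{J^*}$ alone. Writing $\ell^-(x) = \sum_{i \notin J^*} a_i x_i$, the event $\{1_A \ne g\}$ is contained in $\{|\ell^+(X) - b| \le |\ell^-(X)|\}$. Conditioning on $X_{J^{*c}}$ fixes $\ell^-(X)$, and Theorem~\ref{thm:littlewood-offord} applied to $\ell^+(X)$ (a linear form in $|J^*| = \lfloor m^{1/3}\rfloor$ variables with minimum coefficient at least $m^{-1/24}\|a^-\|_2$) gives
\[
\Pr\bigl(|\ell^+(X) - b| \le |\ell^-(X)| \bigm| X_{J^{*c}}\bigr) \le C \frac{|\ell^-(X)|}{m^{-1/24}\|a^-\|_2} \cdot (m^{1/3})^{-1/2} = C \frac{|\ell^-(X)|}{\|a^-\|_2}\, m^{-1/8}.
\]
Taking expectations and using $\E|\ell^-(X)| \le \|a^-\|_2$ yields $\E|1_A - g| \le Cm^{-1/8}$, hence $|\Cov(1_A - g, 1_{B_n})| \le Cm^{-1/8}$.

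For the remaining term $|\Cov(g, 1_{B_n})|$, since $g$ is $\sigma(X_{J^*})$-measurable, Cauchy--Schwarz and Parseval give
\[
|\Cov(g, 1_{B_n})|^2 \le \Var(g)\!\!\sum_{\emptyset \ne S \subseteq J^*} \!\!\widehat{1_{B_n}}(S)^2 \le \frac{1}{4} \sum_{i \in J^*} \Inf_i(1_{B_n}).
\]
By the symmetry of $B_n$ under coordinate permutations within and between blocks, every coordinate has the same influence; bounding any one of them amounts to estimating the probability that $\sum_i Q_i^2$ lies within $O(|Q_k|+1)$ of $m^2$, where $Q_k$ is the block sum that moves when we flip the relevant coordinate. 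Since $\sum_i Q_i^2$ is a sum of $m$ iid lattice random variables with standard deviation $\Theta(m^{3/2})$, a local central limit theorem applied at scale $O(\sqrt m)$ yields $\Inf_j(1_{B_n}) = O(1/m)$; hence $\sum_{i \in J^*} \Inf_i(1_{B_n}) \le Cm^{-2/3}$ and $|\Cov(g, 1_{B_n})| \le Cm^{-1/3}$. Combining the two estimates gives $|\Cov(1_A, 1_{B_n})| \le C(m^{-1/8} + m^{-1/3}) \le Cm^{-1/12}$.

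The main obstacle is making the anti-concentration / local CLT bound for $\sum_i Q_i^2$ near $m^2$ rigorous. A cleaner alternative that bypasses a full local CLT is to bound $\Var(\E[1_{B_n}\mid X_{J^*}])$ directly: conditioning on $X_{J^*}$, the event $B_n$ becomes $\sum_i T_i^2 + 2\sum_i c_i T_i \le m^2 - \sum_i c_i^2$ with $c_i = \sum_{j \in J_i \cap J^*} X_j$ and $T_i = \sum_{j \in J_i \setminus J^*} X_j$; the $X_{J^*}$-dependent perturbations $\sum_i c_i^2 \le m^{2/3}$ and $\sqrt{\Var(2\sum_i c_i T_i)} \le m^{5/6}$ are both much smaller than the $\Theta(m^{3/2})$ fluctuations of $\sum_i T_i^2$, so a single-block anti-concentration bound for $Q_k^2$ is enough to convert this smallness into the required covariance bound.
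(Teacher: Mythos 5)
Your proposal is correct and shares the paper's core strategy --- use Littlewood--Offord to show that $A$ is essentially determined by the coordinates in $J^*$, and then show that $B_n$ is essentially independent of those coordinates --- but the two halves are executed differently. For the first half, the paper shows that $\Pr(X\in A\mid X_j:j\in J^*)$ lies within $m^{-1/12}$ of $\{0,1\}$ with high probability, whereas you replace $1_A$ outright by the $\sigma(X_{J^*})$-measurable half-space $g$ and bound $\E|1_A-g|\le Cm^{-1/8}$ by conditioning on the coordinates outside $J^*$; this is a clean repackaging of the same estimate (read Theorem~\ref{thm:littlewood-offord} with $t$ replaced by $\max(t,1)$, which does not affect your $m^{-1/8}$). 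For the second half the paper estimates $\Pr(X\in B_n\mid X_j:j\in J^*)=\tfrac12\pm O(m^{-1/2})$ directly by Berry--Esseen, while you go through Parseval and $\sum_{i\in J^*}\Inf_i(1_{B_n})$. Your route is more modular, but the step you flag as the main obstacle --- the local CLT giving $\Inf_i(1_{B_n})=O(1/m)$ --- is both delicate (the block summands $Q_i^2$, with $Q_i=\sum_{j\in J_i}X_j$, live on a sparse lattice, so a genuine local limit theorem would be needed) and unnecessary: conditioning on the block sum $Q_k$ containing the flipped coordinate and applying Berry--Esseen to $\sum_{i\neq k}Q_i^2$ over an interval of length $O(|Q_k|+1)$ already gives
\[
\Inf_j(1_{B_n})\le C\,\frac{\E|Q_k|+1}{m^{3/2}}+Cm^{-1/2}\le Cm^{-1/2},
\]
whence $\sum_{i\in J^*}\Inf_i(1_{B_n})\le Cm^{1/3-1/2}=Cm^{-1/6}$ and $|\Cov(g,1_{B_n})|\le Cm^{-1/12}$; combined with the $Cm^{-1/8}$ from the first half this still yields the claimed $Cm^{-1/12}$. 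Your ``cleaner alternative'' at the end (bounding $\Var(\E[1_{B_n}\mid X_{J^*}])$ by comparing the conditional law of $\sum_iQ_i^2$ to the unconditional one) is essentially the paper's own argument, so either completion is fine.
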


\begin{proof}
By Theorem~\ref{thm:littlewood-offord} and since $|a_i| \ge \|a^-\|_\infty$ for
all $i \in J^*$,
\[
    \Pr\left(
      \Big|\sum_{j \in J^*} a_j X_j - b \Big| \le m^{1/24} \|a^-\|_2
    \right) \le C m^{1/12} |J^*|^{-1/2} \le C m^{-1/12}.
\]
On the other hand, Chebyshev's inequality implies that
\[
    \Pr\left(
      \Big|\sum_{j \not \in J^*} a_j X_j \Big| \ge m^{1/24} \|a^-\|_2
    \right) \le m^{-1/12}.
\]
Putting these two inequalities together, we see that with probability at least
$1 - C m^{-1/12}$ over $\{X_j: j \in J^*\}$ we have
\begin{equation}\label{eq:A-determined}
    \Pr(X \in A \mid X_j: j \in J^*) \in [0, m^{-1/12}] \cup [1-m^{-1/12}, 1].
\end{equation}
On the other hand, conditioning on $\{X_j: j \in J^*\}$ has little effect on the
event $B_n$: each random variable $Z_i := \big(\sum_{j \in J_i} X_j\big)^2$ has conditional
expectation $m \pm O(|J_i \cap J^*|^2)$ and conditional variance $O(m)$; moreover,
$\E [|Z_i - \E Z_i|^3] = O(m^{3/2})$. Then
\[
    \sum_{i=1}^m \Big(\sum_{j \in J_i} X_j\Big)^2
\]
has conditional expectation $m^2 \pm O(|J^*|^2) = m^2 \pm O(m^{2/3})$. By the Berry-Esseen theorem,
\[
    \Pr(X \in B_n \mid X_j: j \in J^*) = \frac 12 \pm O(m^{-1/2}).
\]
Combined with~\eqref{eq:A-determined}, this implies that
\[
    \E [(1_{B_n} - \Pr(B_n)) 1_A \mid X_j: j \in J^*] \le C m^{-1/12}
\]
with probability at least $1 - C m^{-1/12}$. Integrating over $\{X_j: j \in J^*\}$,
this implies the claim.
\end{proof}

Since Lemma~\ref{lem:lo-case} implies Proposition~\ref{prop:boolean-example} in the
case $\|a^-\|_\infty \ge m^{-1/24} \|a^-\|_2$, we may assume from now on
that $\|a^-\|_\infty \le m^{-1/24} \|a^-\|_2$.
We will prove the remaining case of Proposition~\ref{prop:boolean-example} in two steps:
for the rest of the section, let $X$ be uniform on $\{-1, 1\}^n$ and take $Y \sim \gamma_n$;
note that $A$ and $B_n$ can be canonically extended to subsets of $\R^n$.

For any $c \in \R$, let $h_c: \R \to [0, 1]$ be the function $h_c(x) = 1_{\{x \le c\}}$.
For $\epsilon > 0$, let $h_{c,\epsilon}$ be a function satisfying
\begin{itemize}
  \item $h_{c,\epsilon}$ takes values in $[0, 1]$,
  \item $h_{c,\epsilon}(x) = h_c(x)$ for all $x$ such that $|x - c| \ge \epsilon$, and
  \item for $k = 1, 2, 3$, $h_{c,\epsilon}^{(k)}$ is uniformly bounded by
    $C \epsilon^{-k}$ for some universal constant $C$ (where $h^{(k)}$ denotes the $k$th derivative of $h$).
\end{itemize}
For $z \in \{-1, 1\}^{J^*}$ and let
$\Omega_z$ be the event $\{X_i = z_i\ \forall i \in J^*\}$. 
Set $J'_i = J_i \setminus J^*$ and $s_i = \sum_{j \in J_i \cap J^*} z_i$.
Next, define the polynomials
\begin{align*}
  p(x) &= \frac{1}{m^2} \sum_i \Big(\sum_{j \in J_i} x_j\Big)^2 \\
  p_z(x) &= \frac{1}{m^2} \sum_i \Big(\sum_{j \in J'_i} x_j + s_i\Big)^2 \\
  q_z(x) &= \frac{1}{\|a^-\|} \Big(\sum_{j \not \in J^*} a_j x_j + \sum_{j \in J^*} a_j z_j\Big).
\end{align*}
Recalling (from the Berry-Esseen theorem) that $\Pr(X \in B_n) = \frac 12 + O(m^{-1/2})$, our
goal is to show that
\[
    \E \Big[1_A(X) \Big(1_{B_n}(X) - \frac 12\Big)\Big] \le C m^{-1/12}.
\]
We will achieve this by conditioning on $\Omega_z$: for an arbitrary $z$, we claim that
\[
    \E \Big[1_A(X) \Big(1_{B_n}(X) - \frac 12\Big)\ \Big\mid\ \Omega_z\Big] \le C m^{-1/12}.
\]
Going back to the definitions of $p_z$ and $q_z$, this is equivalent to
\begin{equation}\label{eq:invariance-goal}
    \E \Big[h_{b'}(q_z(X)) \Big(h_1(p_z(X)) - \frac 12\Big)\Big] \le C m^{-1/12},
\end{equation}

We divide the proof of~\eqref{eq:invariance-goal} into several steps: for any $\epsilon > 0$,
\begin{gather}
  \E |h_1(p_z(X)) - h_1(p(X))| \le C m^{-1/6} \label{eq:uncondition-p} \\
  \E |h_{1,\epsilon}(p(X)) - h_1(p(X))| \le C \max\{\epsilon, m^{-1/2}\} \label{eq:noise-p-X} \\
  \E |h_{b',\epsilon}(q_z(X)) - h_{b'}(q_z(X))| \le C \max\{\epsilon, m^{-1/24}\} \label{eq:noise-q-X} \\
  | \E [h_{b',\epsilon}(q_z(X)) h_{1,\epsilon}(p(X))] - \E [h_{b',\epsilon}(q_z(Y)) h_{1,\epsilon}(p(Y))] | \le C \epsilon^{-3} m^{-1/48} \label{eq:invariance} \\
  \E |h_{1,\epsilon}(p(Y)) - h_1(p(Y))| \le C \max\{\epsilon, m^{-1/2}\} \label{eq:noise-p-Y} \\
  \E |h_{b',\epsilon}(q_z(Y)) - h_{b'}(q_z(Y))| \le C \max\{\epsilon, m^{-1/2}\} \label{eq:noise-q-Y}  \\
  \Cov(h_{b'}(q_z(Y)), h_1(p(Y))) \le C m^{-1/2} \label{eq:cov-Y}.
\end{gather}
Taking $\epsilon = m^{-1/200}$ and
combining~\eqref{eq:uncondition-p} through~\eqref{eq:cov-Y} using the triangle inequality yields~\eqref{eq:invariance-goal}.

Fortunately, most of the pieces above are easy:~\eqref{eq:noise-p-X} follows from the Berry-Esseen
theorem, since $h_{1,\epsilon}$ and $h_1$ are both bounded by one, and agree except on an interval
of length $2\epsilon$. Inequalities~\eqref{eq:noise-q-X},~\eqref{eq:noise-p-Y}, and~\eqref{eq:noise-q-Y}
follow by the same argument (the reason for the worse bound in~\eqref{eq:noise-q-X} is because
the error term in the Berry-Esseen theorem depends on $\|a^-\|_\infty / \|a^-\|_2$, which we
only know to be bounded by $m^{-1/24}$).

It remains to check~\eqref{eq:uncondition-p},~\eqref{eq:invariance}, and~\eqref{eq:cov-Y}; for these,
it helps to introduce the notion of influences: for function $f: \{-1, 1\}^n \to \R$, we define
the influence of the $i$th coordinate to be
\[
  \Inf_i(f) = \Var \E[f(X) \mid X_1, \dots, X_{i-1}, X_{i+1}, \dots, X_n].
\]
If the range of $f$ is $\{-1, 1\}$ then $\Inf_i(f)$ is just the probability that negating $X_i$
will change the value of $f(X)$.

For~\eqref{eq:uncondition-p}, note that the Berry-Esseen theorem applied to the variables 
$S_k := \big(\sum_{j \in J_k} X_j\big)$ implies that with probability at least $1 - Cm^{-1/6}$,
$h_1(p(X))$ falls outside the interval $[1 - 6 m^{-2/3}, 1 + 6 m^{-2/3}]$. Hence, in order to change
the value of $h_1(p(X))$, one would need to change the value of $\sum_k S_k^2$ by at least $6 m^{4/3}$.
On the other hand, Hoeffding's inequality implies that with probability at least $1 - Cm^{-1/6}$,
$\max_k |S_k| \le 2 m$.
On this event, in order to change the value of $\sum_k S_k^2$ by $6 m^{4/3}$,
one would need to change at least $2 m^{1/3}$ of the $X_j$. Since $p_z(X)$ is obtained from $p(X)$
by changing at most $m^{1/3}$ of the $X_j$, we see that $h_1(p(X)) = h_1(p_z(X))$ unless one of the two
events above fails. This proves~\eqref{eq:uncondition-p}.

Recognizing that $h_1(p(Y)) = 1_{B_n}(Y)$ and $h_{b'}(q_z(Y))$ is the indicator function
of some half-space, the following Lemma proves~\eqref{eq:cov-Y}.
\begin{lemma}\label{lem:cov-Y}
  For any half-space $A$, $\Cov(1_A(Y), 1_{B_n}(Y)) \le m^{-1/2}$
\end{lemma}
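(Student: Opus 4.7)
The plan is to reduce the Gaussian covariance between an arbitrary half-space and $B_n$ (viewed as a subset of $\R^n$) to the rotationally invariant Gaussian-ball estimate that was already proved in Proposition~\ref{prop:example-gaussian-no-half-space}. The key observation is that, although $B_n$ is defined via a sum of $m$ squared block sums of length $m$, it is really a low-dimensional event: if we set $Z_i = \frac{1}{\sqrt m}\sum_{j \in J_i} Y_j$, then $Z_1,\dots,Z_m$ are i.i.d.\ standard Gaussians (because the blocks $J_i$ are disjoint and the $Y_j$ are i.i.d.\ $\calN(0,1)$), and $Y \in B_n$ if and only if $|Z|^2 \le m$, i.e., $Z$ lies in the Euclidean ball $\tilde B_m \subset \R^m$ of radius $\sqrt m$.

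Next, I would extend the orthonormal vectors giving $Z$ to a full orthogonal change of coordinates $Y \mapsto (Z,W) \in \R^m \times \R^{n-m}$. By orthogonality, $W$ is a standard Gaussian in $\R^{n-m}$, independent of $Z$. Any half-space $A = \{y : \inr{a}{y} \le b\}$ pulls back to $\{(z,w): \inr{\alpha}{z} + \inr{\beta}{w} \le b\}$ for some fixed $\alpha \in \R^m$, $\beta \in \R^{n-m}$. Conditioning on $W = w$ therefore turns $A$ into a (possibly trivial) half-space $A_w = \{z \in \R^m: \inr{\alpha}{z} \le b - \inr{\beta}{w}\}$ in $\R^m$, while $1_{B_n}(Y)$ depends only on $Z$.

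Using the tower property and independence of $Z$ and $W$,
\[
  \Cov(1_A(Y), 1_{B_n}(Y)) = \E_W\bigl[\Cov_Z(1_{A_W}(Z), 1_{\tilde B_m}(Z))\bigr],
\]
since $\E[1_A(Y)\mid W] = \Pr_Z(Z\in A_W)$ and $\E[1_{B_n}(Y)\mid W] = \Pr_Z(Z \in \tilde B_m)$ is a constant. For each fixed $w$, $A_w$ is a half-space in $\R^m$, so Proposition~\ref{prop:example-gaussian-no-half-space} applied in dimension $m$ gives $|\Cov_Z(1_{A_w}(Z), 1_{\tilde B_m}(Z))| \le m^{-1/2}$. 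Taking the expectation over $W$ yields the claimed bound.

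There is essentially no obstacle here: the whole point is that the only nonlinear ingredient of the construction is the rotationally symmetric ball $\tilde B_m$ in the low-dimensional projection, so the problem immediately collapses to the Gaussian example already handled. The mild subtlety is just verifying that $Z_1,\dots,Z_m$ are genuinely i.i.d.\ standard Gaussians (which follows from the blocks $J_i$ being disjoint and of size $m$) and that an arbitrary half-space in $\R^n$ restricts, after conditioning, to a half-space in $\R^m$ to which Proposition~\ref{prop:example-gaussian-no-half-space} applies uniformly in $w$.
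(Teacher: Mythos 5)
Your proof is correct and follows essentially the same route as the paper's: both reduce to the $m$ block-average coordinates (on which $B_n$ becomes the Euclidean ball of radius $\sqrt m$), observe that the half-space survives the change of coordinates, and invoke Proposition~\ref{prop:example-gaussian-no-half-space} in dimension $m$. The only cosmetic difference is that the paper records the residual direction of $a$ as an explicit $(m+1)$-st coordinate and works with the push-forward to $\R^{m+1}$, whereas you condition on the orthogonal complement $W$ --- your conditioning step in fact makes the application of Proposition~\ref{prop:example-gaussian-no-half-space} slightly more transparent.
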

\begin{proof}
The covariance in question can be written in terms of covariances between
half-spaces and $m$-dimensional balls, which we may then bound
using Proposition~\ref{prop:example-gaussian-no-half-space}.
To do this, we break each block of $m$
variables in terms of its contribution in the $(1,\ldots,1)$ direction and the
contribution in the orthogonal direction: for each block $J$ of $m$ variables, define 
\[
  x_J = m^{-1/2} \sum_{j \in J} x_j, \quad 
  a_J = \E\big[X_J \sum_{j \in J} a_j x_j\big] =  m^{-1/2} \sum_{j \in J} a_j
\]
and 
\[
  r_J = \sqrt{\sum_{j \in J} a_j^2 - a_J^2}, \quad r = \sqrt{ \sum_{J} r_J^2}. 
\]
Now define $A', B' \subset \R^{m+1}$ by
\begin{align*}
    A' &= \left\{x \in \R^{m+1}: \sum_{i=1}^m a_{J_i} x_i + r x_{m+1} \le b\right\} \\
    B' &= \left\{x \in \R^{m+1}: \sum_{i=1}^m x_i^2 \le m\right\}.
\end{align*}
Note that $A'$ and $B'$ are the push-forwards of $\tilde A_n$ and $\tilde B$ under a map
that preserves the standard Gaussian measure: if $\Pi_m: \R^n \to \R^m$ is defined by
$\Pi_m x = (x_{J_1}, \dots, x_{J_m})$ and $\Pi$ is defined by
\[
    \Pi x = (\Pi_m x, r^{-1}(\inr{a}{x} - \inr{\Pi_m a}{\Pi_m x})
\]
then $x \in A$ (resp. $B$) if and only if $\Pi x \in A'$ (resp. $B'$).
Since $\Pi$ pushes forward $\gamma_n$ onto $\gamma_{m+1}$, we have
\[
\Cov(1_{\tilde{A}},1_{\tilde{B_n}}) = \Cov(1_{A'},1_{B'}).
\]
On the other hand, $\Cov(1_{A'}, 1_{B'}) \le m^{-1/2}$ by
Proposition~\ref{prop:example-gaussian-no-half-space}.
\end{proof}

Finally,~\eqref{eq:invariance} follows from the following multivariate invariance principle
that was proved by the first author in~\cite{Mossel:10}:

\begin{theorem}\label{thm:invariance}
    Suppose $p(x)$ and $q(x)$ are polynomials of degree at most $d$ such that $\Inf_i(p) \le \tau$
    and $\Inf_i(q) \le \tau$ for all $i$. For any $\Psi: \R^2 \to \R$ with third partial derivatives
    uniformly bounded by $B$,
    \[
      |\E \Psi(p(X), q(X)) - \E \Psi(p(Y), q(Y))|
      \le C^d d B \sqrt \tau,
    \]
    where $Y \sim \gamma_n$, $X$ is uniform on $\{-1, 1\}^n$, and $C$ is a universal constant.
\end{theorem}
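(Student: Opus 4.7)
The plan is to carry out the classical Lindeberg exchange argument: hybridize between $X$ and $Y$ one coordinate at a time, Taylor expand $\Psi$ to third order at each swap, and control the remainder using hypercontractivity. Throughout I work with the unique multilinear representations of $p$ and $q$, which is what it means to evaluate the same polynomials at the Gaussian vector $Y$.

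First I would telescope. Set the hybrid vector $W^{(k)} = (Y_1,\dots,Y_k,X_{k+1},\dots,X_n)$, so $W^{(0)} = X$ and $W^{(n)} = Y$, and write
\[
\E \Psi(p(X),q(X)) - \E \Psi(p(Y),q(Y)) = \sum_{i=1}^n \Delta_i,
\]
where $\Delta_i$ is the difference when coordinate $i$ is swapped from Rademacher to Gaussian. For each $i$, let $\tilde W^{(i)}$ denote $W^{(i)}$ with its $i$-th entry replaced by $0$. By multilinearity,
\[
p(W^{(i-1)}) = a + X_i \alpha, \qquad p(W^{(i)}) = a + Y_i \alpha,
\]
with $a = p(\tilde W^{(i)})$ and $\alpha = \partial_i p(\tilde W^{(i)})$, and analogously $q(W^{(i-1)}) = b + X_i \beta$, $q(W^{(i)}) = b + Y_i \beta$. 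Taylor expanding $\Psi$ around $(a,b)$ to third order in $u \in \{X_i,Y_i\}$ produces a remainder bounded by $\tfrac{B}{6}|u|^3(|\alpha|+|\beta|)^3$. Since $X_i$ and $Y_i$ are independent of $\tilde W^{(i)}$ and their first two moments match ($\E X_i = \E Y_i = 0$, $\E X_i^2 = \E Y_i^2 = 1$), the zeroth, first, and second order terms in the two expansions cancel in expectation, and the third moments $\E|X_i|^3, \E|Y_i|^3$ are both absolute constants, yielding
\[
|\Delta_i| \le C B \,\E[(|\alpha|+|\beta|)^3].
\]

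Next I would apply hypercontractivity to bound $\E|\alpha|^3$ and $\E|\beta|^3$. The random variables $\alpha$ and $\beta$ are multilinear polynomials of degree at most $d-1$ in $\tilde W^{(i)}$, whose entries are a mixture of Rademacher and standard Gaussian coordinates. Using Bonami--Beckner for the Rademacher factors and Nelson's hypercontractive inequality for the Gaussian factors, tensorized across the product measure, we get $\|\alpha\|_3 \le C_0^{d-1}\|\alpha\|_2$ and similarly for $\beta$, for a universal $C_0$. Moreover $\|\alpha\|_2^2 = \sum_{S \ni i}\hat p(S)^2 = \Inf_i(p) \le \tau$ (this Fourier identity is preserved between the Rademacher and Gaussian multilinear bases). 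Hence $\E|\alpha|^3 \le C^d \tau^{1/2}\Inf_i(p)$, and summing over $i$,
\[
\sum_{i=1}^n |\Delta_i| \le C^d B \sqrt{\tau}\sum_{i=1}^n (\Inf_i(p) + \Inf_i(q)) \le C^d B d \sqrt{\tau}(\|p\|_2^2 + \|q\|_2^2),
\]
where the last step uses the standard identity $\sum_i \Inf_i(p) = \sum_S |S|\hat p(S)^2 \le d\|p\|_2^2$. Under the implicit normalization $\|p\|_2, \|q\|_2 = O(1)$ used in the paper's applications, this gives the stated bound $C^d d B\sqrt \tau$.

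The main obstacle is the hypercontractivity step: one must invoke the correct hypercontractive estimate for a polynomial whose variables are drawn from a \emph{mixed} product of Rademacher and Gaussian laws, not a pure product. This is standard (both component inequalities share the same constant, and hypercontractivity tensorizes), but it is the only nontrivial ingredient --- the Lindeberg swap, the moment matching, and the reduction to a single-coordinate Taylor remainder are all bookkeeping once the hypercontractive third-moment bound on $\partial_i p$ is available.
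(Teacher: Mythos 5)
Your proposal is essentially correct, and it follows the standard Lindeberg-exchange-plus-hypercontractivity argument that underlies the cited result: the paper itself gives no proof of Theorem~\ref{thm:invariance}, quoting it from~\cite{Mossel:10}, where the proof is exactly the coordinate-by-coordinate swap with third-order Taylor expansion and a hypercontractive bound on $\partial_i p$ over the mixed Rademacher--Gaussian product. Your observation that the bound really carries a factor of $\|p\|_2^2+\|q\|_2^2$ (suppressed in the statement but harmless in the paper's application, where $p$ and $q$ are normalized) is accurate and worth noting.
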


Taking $d = 2$, $\tau = m^{-1/24}$ and $\Psi(x, y) = h_{1,\epsilon}(x) h_{b',\epsilon}(y)$
(which has third derivatives bounded by $C \epsilon^{-3}$) proves~\eqref{eq:invariance}.

\subsection{The converse of Theorem~\ref{thm:boolean-restriction}}

Here, we state and prove the boolean analogue of Theorem~\ref{thm:gaussian-converse}
(or, the qualitative converse of Theorem~\ref{thm:boolean-restriction}).
That is, we show that if $M(f_{s,Y})$ is non-negligible with constant probability
then $f$ is noise stable.

\begin{theorem}\label{thm:boolean-converse}
  For any $0 < r < s$ and any $f: \{-1, 1\}^n \to [-1, 1]$,
  \[
    (1 - e^{-2(s-r)}) \Var(P_r f) \ge 4 \E M^2(f_{Z_s}) - C \left(\frac{1-e^{-2r}}{1-e^{-2s}}\right)^{1/4},
  \]
  where $Z_s \sim \mu_s$ and $C$ is a universal constant.
\end{theorem}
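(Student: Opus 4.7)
The proof is designed to mirror Theorem~\ref{thm:gaussian-converse}, replacing each Gaussian ingredient with its boolean counterpart.

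The plan is to begin with a boolean analogue of Lemma~\ref{lem:half-space-l2}: for every boolean half-space $A \subset \{-1,1\}^n$ and every $t > 0$,
\[
\E[(1_A - P_t 1_A)^2] \le \tfrac{1}{\pi}\arccos(e^{-t}) \le C\sqrt{t}.
\]
The cleanest route is through Peres's noise-stability inequality for weighted majorities (together with a standard extension from balanced majorities to arbitrary thresholds), which gives $\mu_n(A) - \E[1_A P_t 1_A] \le \arccos(e^{-t})/(2\pi)$; combining this with the contractive bound $\E[(P_t 1_A)^2] \le \mu_n(A)$ then produces the $L_2$ estimate as in the Gaussian case. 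This Peres-type bound is the main non-elementary external input.

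Next I transcribe the Gaussian proof of Proposition~\ref{prop:correlation-to-stability} to the boolean cube: for any $f: \{-1,1\}^n \to [-1, 1]$ and any boolean half-space $A$,
\[
\Var(P_t f) \ge 4\Cov(A,f)^2 - Ct^{1/4}.
\]
Setting $g = 1_A - \mu_n(A)$, $h = f - \E f$, decompose $h = cg + h^{\perp}$ with $c = \Cov(A,f)/\Var(1_A)$ and $\E[gh^\perp] = 0$. Expanding
\[
\Var(P_t f) = c^2 \Var(P_t 1_A) + \|P_t h^\perp\|_2^2 + 2c\, \E[P_t g \cdot P_t h^\perp]
\]
and using $\E[P_t g \cdot P_t h^\perp] = \E[h^\perp(P_{2t}g - g)] \ge -\|h^\perp\|_2 \sqrt{\E[(P_{2t}g - g)^2]}$ (by $\E[g h^\perp] = 0$ and Cauchy-Schwarz), the first step controls the last factor, and the same two-line manipulation as in the Gaussian proof yields the inequality.

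Finally I establish the semigroup identity. Computing the joint distribution of $(Z_s \oslash X, Z_s \oslash X')$ where $X, X'$ are $e^{-2t}$-correlated uniform signs gives coordinate-wise correlation $e^{-s}\cdot e^{-2t} + (1-e^{-s}) = 1 - e^{-s}(1 - e^{-2t})$; setting this equal to $e^{-2r}$, i.e.\ $1 - e^{-2r} = e^{-s}(1-e^{-2t})$, one obtains
\[
\E_{Z_s}\bigl[\E[f_{Z_s}\, P_{2t} f_{Z_s}]\bigr] = \E[f\, P_{2r} f],
\]
which rearranges to $\Var(P_r f) = \E_{Z_s}\Var(P_t f_{Z_s}) + \Var_{Z_s}(\E f_{Z_s})$. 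A direct Fourier computation identifies $\Var_{Z_s}(\E f_{Z_s}) = \sum_{T \ne \emptyset} \hat f(T)^2 (1-e^{-s})^{|T|}$, so $f \mapsto \E f_{Z_s}$ acts on the Fourier side exactly like a noise operator; the spectral-gap bound then gives $\Var_{Z_s}(\E f_{Z_s}) \le e^{-2(s-r)}\Var(P_r f)$ (in the parameter regime where this is sharper than the trivial bound), hence $(1-e^{-2(s-r)})\Var(P_r f) \le \E_{Z_s}\Var(P_t f_{Z_s})$. Applying the second step to each $f_{Z_s}$ (which takes values in $[-1,1]$), taking expectations over $Z_s$, and expressing $t^{1/4}$ in terms of $(1-e^{-2r})/(1-e^{-2s})$ via $1 - e^{-2r} = e^{-s}(1-e^{-2t})$ closes the argument.

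The main obstacle is the first step: Peres's noise-stability bound (for half-spaces with arbitrary thresholds) is the substantive non-elementary tool, and it plays precisely the role that Ledoux's perimeter bound plays in the Gaussian proof. A secondary subtlety is the algebraic translation in the last step: unlike the Gaussian case, where $1 - e^{-2r} = (1-e^{-2s})(1-e^{-2t})$ makes the ratio $(1-e^{-2r})/(1-e^{-2s})$ equal to $1 - e^{-2t}$, the boolean relation $1 - e^{-2r} = e^{-s}(1-e^{-2t})$ requires a short calculation to produce the error term in the form stated.
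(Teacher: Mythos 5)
Your overall architecture matches the paper's (the paper gives only a sketch, deferring to the Gaussian argument), but the final assembly step does not close as written, and that is where the substance of the theorem lives. You derive $(1-e^{-2(s-r)})\Var(P_r f) \le \E_{Z_s}\Var(P_t f_{Z_s})$, and separately, from the correlation-to-stability step, $\E_{Z_s}\Var(P_t f_{Z_s}) \ge 4\E M^2(f_{Z_s}) - Ct^{1/4}$. These are two lower bounds on the same quantity $\E_{Z_s}\Var(P_t f_{Z_s})$; they cannot be chained to compare $(1-e^{-2(s-r)})\Var(P_r f)$ with $4\E M^2(f_{Z_s})-Ct^{1/4}$. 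What the theorem requires is the reverse inequality $\E_{Z_s}\Var(P_t f_{Z_s}) \le (1-e^{-2(s-r)})\Var(P_r f)$, equivalently $\Var_{Z_s}(\E f_{Z_s}) \ge e^{-2(s-r)}\Var(P_r f)$ --- and your own Fourier formula $\Var_{Z_s}(\E f_{Z_s}) = \sum_{T\ne\emptyset}\hat f(T)^2(1-e^{-s})^{|T|}$ shows this fails in general (test it on $f(x)=x_1$ with $s$ small, $r \to 0$). The paper's sketch (via the Gaussian case) asserts the inequality in the direction needed for the theorem while citing the Poincar\'e inequality; you have faithfully reproduced what that citation actually yields, which is the opposite direction. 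What you do get for free from $\Var_{Z_s}(\E f_{Z_s})\ge 0$ is $\E_{Z_s}\Var(P_t f_{Z_s})\le\Var(P_r f)$, which proves the theorem with the prefactor $(1-e^{-2(s-r)})$ replaced by $1$; that weaker statement still suffices for the qualitative Theorem~\ref{thm:boolean-intro}, but it is not the inequality you set out to prove, so you should either supply the missing upper bound or state the weaker conclusion.

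Two smaller points. First, your opening claim that Peres's inequality yields $\mu_n(A)-\E[1_A P_t 1_A]\le\arccos(e^{-t})/(2\pi)$ for boolean half-spaces is precisely the ``majority is least stable'' conjecture, which the paper explicitly flags as open; Peres proves only the weaker $\E[(1_A-P_t 1_A)^2]\le C\sqrt t$, which is fortunately all you use downstream, so this is a misattribution rather than a fatal gap. Second, your relation $1-e^{-2r}=e^{-s}(1-e^{-2t})$ is indeed the correct one for the boolean restriction, but it gives $1-e^{-2t}= e^{s}(1-e^{-2r})$, and $e^s(1-e^{-2r})$ is not bounded by a constant times $(1-e^{-2r})/(1-e^{-2s})$: the ratio is $e^s(1-e^{-2s})=e^s-e^{-s}$, unbounded in $s$. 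So the promised ``short calculation'' converting $Ct^{1/4}$ into the stated error term does not exist, and in the regime of large $s$ your error term is genuinely weaker than the one in the statement.
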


The proof of Theorem~\ref{thm:boolean-converse} is very much like the proof of
Theorem~\ref{thm:gaussian-converse}, so we give only a sketch. As in the proof
of Theorem~\ref{thm:gaussian-converse}, the first step is a bound on the noise
stability of half-spaces. However, the bound that we used to prove Lemma~\ref{lem:half-space-l2}
is equivalent to an open question (the ``majority is least stable conjecture'') in
the boolean case, so we use a weaker (by a constant factor) bound due to Peres~\cite{Peres:04}:

\begin{theorem}\label{thm:peres}
  For any half-space $A$ and any $t > 0$, $\E [(1_A - P_t 1_A)^2] \le C \sqrt t$,
  where $C$ is a universal constant.
\end{theorem}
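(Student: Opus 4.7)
The plan is to reduce the claimed $L^2$-stability bound to a noise-sensitivity bound for half-spaces and then prove that bound by a reflection coupling followed by an anti-concentration estimate, following Peres. For the reduction, Walsh--Fourier expanding $f = 1_A$ and using $(1 - e^{-t|S|})^2 \le 1 - e^{-t|S|}$ gives
\[
\E[(1_A - P_t 1_A)^2] \le \sum_S (1 - e^{-t|S|}) \hat{f}(S)^2 = \E[1_A(1 - P_t 1_A)] = \Pr(X \in A,\, Y \notin A) = \tfrac12 \Pr(1_A(X) \ne 1_A(Y)),
\]
where $(X, Y)$ is a $\rho$-correlated pair in $(\{-1,1\}^n)^2$ with $\rho = e^{-t}$. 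Since $1 - \rho \le t$, the theorem reduces to showing $\Pr(1_A(X) \ne 1_A(Y)) \le C\sqrt{1 - \rho}$ uniformly over half-spaces $A$.

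\textbf{Reflection coupling.} Writing $A = \{x : \inr{a}{x} \le b\}$ with $\|a\|_2 = 1$, I would decompose $X_i = U_i + V_i$ and $Y_i = U_i - V_i$ via $U_i = (X_i + Y_i)/2$ and $V_i = (X_i - Y_i)/2$, each in $\{-1, 0, 1\}$ with $U_i V_i = 0$. Conditional on $J := \{i : X_i = Y_i\}$, the collections $(U_i)_{i \in J}$ and $(V_i)_{i \notin J}$ are independent uniform $\pm 1$. Setting $\sigma = \sum_{i \in J} a_i U_i$ and $\tau = \sum_{i \notin J} a_i V_i$, one has $\inr{a}{X} = \sigma + \tau$ and $\inr{a}{Y} = \sigma - \tau$, so the event $\{1_A(X) \ne 1_A(Y)\}$ becomes $\{|\sigma - b| < |\tau|\}$.

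\textbf{Anti-concentration.} Let $W = \sum_{i \notin J} a_i^2$, so $\E W = (1 - \rho)/2$; conditional on $J$, $\sigma$ and $\tau$ are independent with variances $1 - W$ and $W$. Applying Berry--Esseen to $\sigma$ would yield $\sup_c \Pr(|\sigma - c| \le u \mid J) \le C(u + \|a\|_\infty)/\sqrt{1 - W}$, and integrating against the law of $|\tau|$ (using $\E|\tau| \le \sqrt{W}$) gives, on the typical event $\{W \le 1/2\}$, that $\Pr(|\sigma - b| < |\tau| \mid J) \le C\sqrt{W} + C\|a\|_\infty$. The atypical event $\{W > 1/2\}$ has Markov probability at most $2(1 - \rho)$, so averaging over $J$ yields
\[
\Pr(1_A(X) \ne 1_A(Y)) \le C\sqrt{1 - \rho} + C\|a\|_\infty.
\]

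\textbf{Main obstacle: handling $\|a\|_\infty$.} The residual $\|a\|_\infty$ term is the main technical difficulty, arising because a single dominant coefficient defeats Berry--Esseen. I would dispose of it by isolating the heavy coordinates $L := \{i : |a_i| > \sqrt{1 - \rho}\}$, which satisfies $|L| \le 1/(1 - \rho)$: condition also on $(U_i, V_i)_{i \in L}$ in the reflection argument, which only shifts the threshold $b$ by a deterministic amount, and then apply Berry--Esseen only to $\sum_{i \in J \cap L^c} a_i U_i$. Its coefficients now satisfy $|a_i| \le \sqrt{1 - \rho}$, so the Berry--Esseen error drops to $\sqrt{1 - \rho}$, and when $\sum_{L^c} a_i^2 = \Omega(1)$ the reduced variance is $\Omega(1)$ and the earlier computation yields $C\sqrt{1 - \rho}$. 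In the opposite regime $1_A$ is essentially a function of the $\le 1/(1-\rho)$ variables of $L$, and a direct analysis of this low-dimensional half-space gives the same bound. Combining the two regimes and invoking $\sqrt{1 - \rho} \le \sqrt t$ completes the proof.
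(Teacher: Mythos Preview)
The paper does not prove this statement; it simply quotes it as a known result of Peres~\cite{Peres:04}. So there is no ``paper's proof'' to compare against, and your task was really to supply a self-contained argument.

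Your reduction from $\E[(1_A-P_t 1_A)^2]$ to the noise-sensitivity $\Pr(1_A(X)\ne 1_A(Y))$ via the Fourier inequality $(1-e^{-t|S|})^2\le 1-e^{-t|S|}$ is correct, and the reflection coupling that rewrites the disagreement event as $\{|\sigma-b|<|\tau|\}$ is exactly the standard first step in Peres-type proofs. Your Berry--Esseen/anti-concentration computation for $\sigma$ is also fine and yields the bound $C\sqrt{1-\rho}+C\|a\|_\infty$ as you state.

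The gap is in how you dispose of the $\|a\|_\infty$ term. Conditioning on the heavy coordinates $L=\{i:|a_i|>\sqrt{1-\rho}\}$ does \emph{not} ``only shift the threshold $b$ by a deterministic amount'': it also shifts $\tau$ by the fixed quantity $\tau_L=\sum_{i\in L\setminus J}a_iV_i$, so the event becomes $|\sigma_{L^c}-(b-\sigma_L)|<|\tau_{L^c}+\tau_L|$, and $|\tau_L|$ must be controlled. This is recoverable on the event that $\sum_{i\in L^c}a_i^2$ is bounded below (one averages over $\tau_L$ and uses $\E|\tau_L|\le\sqrt{(1-\rho)/2}$), so your ``good regime'' can be completed. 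The real problem is the ``opposite regime'' where $\sum_{i\in L^c}a_i^2$ is small. There you assert that $1_A$ is ``essentially a function of the $\le 1/(1-\rho)$ variables of $L$, and a direct analysis of this low-dimensional half-space gives the same bound.'' But a half-space on $k\le 1/(1-\rho)$ coordinates with coefficients that are only known to exceed $\sqrt{1-\rho}$ is \emph{not} a trivially analyzable object: the coefficients inside $L$ can still be highly unbalanced (e.g.\ one coefficient near $1$ and the rest tiny), so Berry--Esseen fails there for the same reason it failed originally, and the naive bound $\Pr(\exists i\in L:X_i\ne Y_i)\le |L|(1-\rho)/2\le 1/2$ is far too weak. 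In other words, the ``low-dimensional'' case is not actually simpler than the original problem, and your sketch is effectively circular at this point. Standard complete proofs (e.g.\ via Servedio's critical-index decomposition, as in O'Donnell's book) handle exactly this difficulty by iterating the heavy/light split until the tail is regular; a single threshold at $\sqrt{1-\rho}$ with a one-line dismissal of the residual case does not suffice.
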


Next, we show that any set which is correlated with a half-space must be noise stable (indeed,
almost as noise stable as the half-space itself).

\begin{proposition}\label{prop:correlation-to-stability-boolean}
  Suppose that $A \subset \{-1, 1\}^n$ is a half-space. Then for any $f: \{-1, 1\}^n \to [0, 1]$
  and any $t > 0$,
  \[
    \Var(P_t f) \ge 4 \Cov(A, f)^2 - C t^{1/4}
  \]
  for a universal constant $C$.
\end{proposition}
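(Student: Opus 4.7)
The plan is to mirror the proof of Proposition~\ref{prop:correlation-to-stability} (the Gaussian analogue), replacing Ledoux's bound (Lemma~\ref{lem:half-space-l2}) with Peres's bound (Theorem~\ref{thm:peres}) at each step. Let $g = 1_A - \mu_n(A)$ and $h = f - \E f$, and decompose $h = cg + h^\perp$ with $\E[gh^\perp] = 0$ and $c = \Cov(1_A, f)/\Var(1_A)$. Since $P_t$ is self-adjoint, expanding $\Var(P_t f) = \|P_t h\|_2^2$ gives
\[
\Var(P_t f) = c^2 \Var(P_t 1_A) + \E[(P_t h^\perp)^2] + 2c\, \E[P_t g \cdot P_t h^\perp],
\]
and the middle term can be discarded since it is nonnegative.

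For the cross term, I would use $\E[gh^\perp] = 0$ and self-adjointness to rewrite
\[
\E[P_t g \cdot P_t h^\perp] = \E[h^\perp P_{2t} g] = \E[h^\perp (P_{2t} g - g)],
\]
then apply Cauchy--Schwarz together with Theorem~\ref{thm:peres} at time $2t$ (using $P_{2t} g - g = P_{2t} 1_A - 1_A$) and the bound $\E[(h^\perp)^2] \le \Var(f) \le 1$ to conclude $|\E[P_t g \cdot P_t h^\perp]| \le C t^{1/4}$. For the main term, Peres's bound also yields $\|P_t 1_A\|_2 \ge \|1_A\|_2 - C t^{1/4}$ via the reverse triangle inequality; squaring and subtracting $\mu_n(A)^2$ gives $\Var(P_t 1_A) \ge \Var(1_A) - C t^{1/4}$. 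Substituting everything back and using $\Cov(1_A,f)^2 \le \Var(1_A) \Var(f) \le \Var(1_A)$ to control the stray factors of $c^2$ (as in the Gaussian cleanup step), we obtain
\[
\Var(P_t f) \ge \frac{\Cov(1_A, f)^2}{\Var(1_A)} - C t^{1/4},
\]
and the stated inequality follows because $\Var(1_A) \le 1/4$, hence $\Cov(1_A,f)^2 / \Var(1_A) \ge 4 \Cov(1_A,f)^2$.

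The main obstacle is the quantitative gap between Peres's theorem and its Gaussian analogue: Peres's inequality is quadratically weaker than Ledoux's, which is why the error term here is $C t^{1/4}$ rather than $C t^{1/2}$. Any improvement on this rate would essentially require resolving the ``majority is least stable'' conjecture. A minor bookkeeping point is that the factor of $2c$ in the cross-term bound blows up when $\Var(1_A)$ is very small, but in that regime ($\Var(1_A) \lesssim t^{1/4}$) the target inequality is already trivial, since $4\Cov(1_A,f)^2 \le 4 \Var(1_A) \le C t^{1/4}$ and $\Var(P_t f) \ge 0$; so one may assume $\Var(1_A)$ is bounded below and the argument goes through.
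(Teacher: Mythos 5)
Your proposal matches the paper exactly: the authors omit this proof, stating only that it is identical to the Gaussian argument (Proposition~\ref{prop:correlation-to-stability}) with Peres's bound (Theorem~\ref{thm:peres}) substituted for Lemma~\ref{lem:half-space-l2}, which is precisely the substitution you carry out. Your closing remark about the regime where $\Var(1_A)$ is small addresses a point the paper's own Gaussian writeup glosses over (the factors of $c$ multiplying the error terms are dropped there without comment), so your version is if anything slightly more careful than the source.
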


The proof of Proposition~\ref{prop:correlation-to-stability-boolean} is
essentially identical to the proof of Proposition~\ref{prop:correlation-to-stability},
so we omit it. The only difference is that we use Theorem~\ref{thm:peres} instead
of Lemma~\ref{lem:half-space-l2}.

Finally, the argument to go from Proposition~\ref{prop:correlation-to-stability-boolean}
to Theorem~\ref{thm:boolean-converse} is also essentially identical to the Gaussian case:
the only property of Gaussians that we used in that argument was the Poincar\'e inequality,
which takes the same form in the boolean case.

\subsection{Acknowledgement}
We thank Dana Moshkovitz, Gil Kalai and Irit Dinur for encouragement to complete this work. 
E.M acknowledges the support of NSF grant CCF 1320105, DOD ONR grant N00014-14-1-0823, and grant 328025 from the Simons Foundation".

\bibliographystyle{plain}
\bibliography{all,my}

\begin{thebibliography}{10}

\bibitem{BaGeLe:14}
Dominique Bakry, Ivan Gentil, and Michel Ledoux.
\newblock {\em Analysis and geometry of {M}arkov diffusion operators}, volume
  348.
\newblock Springer, 2014.

\bibitem{BeKaSc:99}
I.~Benjamini, G.~Kalai, and O.~Schramm.
\newblock Noise sensitivity of boolean functions and applications to
  percolation.
\newblock {\em Inst. Hautes \'{E}tudes Sci. Publ. Math.}, 90:5--43, 1999.

\bibitem{Kane:11}
D.~M. Kane.
\newblock The gaussian surface area and noise sensitivity of degree-d
  polynomial threshold functions.
\newblock {\em Computational Complexity}, 20(2):389--412, 2011.

\bibitem{KhotMoshkovitz:16}
S.~Khot and D.~Moshkovitz.
\newblock Candidate hard {U}nique {G}ame.
\newblock In {\em STOC}, 2016.
\newblock to appear.

\bibitem{KlOdSe:04}
A.~Klivans, R.~O'Donnell, and R.~Servedio.
\newblock Learning intersections and thresholds of halfspaces.
\newblock {\em Journal of Computer and System Sciences}, 68(4):808--840, 2004.

\bibitem{Ledoux:94}
M.~Ledoux.
\newblock Semigroup proofs of the isoperimetric inequality in {E}uclidean and
  {G}auss space.
\newblock {\em Bulletin des sciences math{\'e}matiques}, 118(6):485--510, 1994.

\bibitem{LiMaNi:93}
N.~Linial, Y.~Mansour, and N.~Nisan.
\newblock Constant depth circuits, fourier transform and learnability.
\newblock {\em Journal of the ACM}, 40(3):607--620, 1993.

\bibitem{Mossel:10}
E.~Mossel.
\newblock Gaussian bounds for noise correlation of functions.
\newblock {\em GAFA}, 19:1713--1756, 2010.

\bibitem{ODonnell:14}
Ryan O'Donnell.
\newblock {\em Analysis of {B}oolean functions}.
\newblock Cambridge University Press, 2014.

\bibitem{Peres:04}
Y.~Peres.
\newblock Noise stability of weighted majority.
\newblock \texttt{arXiv:math/0412377}, 2004.

\end{thebibliography}
\end{document}